\newtheorem{Theorem}{Theorem}[section]
\newtheorem{Proposition}[Theorem]{Proposition}
\newtheorem{Lemma}[Theorem]{Lemma}
\newtheorem{Corollary}[Theorem]{Corollary}
\newtheorem*{OldMainTheorem}{Theorem~\ref{thm-BCSigma2ShuffleOld}}
\newtheorem*{NewMainTheorem}{Theorem~\ref{thm-BCSigma2ShuffleNew}}
\newtheorem{ClaimP}{Claim}
\newtheorem{ClaimC}{Claim}
\newtheorem*{Claim*}{Claim}
\theoremstyle{definition}
\newtheorem{Definition}[Theorem]{Definition}
\newtheorem{Example}[Theorem]{Example}
\newtheorem{Remark}[Theorem]{Remark}
\DeclareFontFamily{U}{mathb}{\hyphenchar\font45}
\DeclareFontShape{U}{mathb}{m}{n}{
<-6> mathb5 <6-7> mathb6 <7-8> mathb7
<8-9> mathb8 <9-10> mathb9
<10-12> mathb10 <12-> mathb12
}{}
\DeclareSymbolFont{mathb}{U}{mathb}{m}{n}
\DeclareMathSymbol{\pprec}{\mathrel}{mathb}{"CE}
\DeclareMathSymbol{\ssucc}{\mathrel}{mathb}{"CF}
\DeclareFontFamily{U}{mathb}{\hyphenchar\font45}
\DeclareFontShape{U}{mathb}{m}{n}{
      <5> <6> <7> <8> <9> <10> gen * mathb
      <10.95> mathb10 <12> <14.4> <17.28> <20.74> <24.88> mathb12
      }{}
\DeclareSymbolFont{mathb}{U}{mathb}{m}{n}
\DeclareMathSymbol{\monus}{2}{mathb}{"01}
\DeclareMathOperator{\dom}{\mathrm{dom}}
\DeclareMathOperator{\ran}{\mathrm{ran}}
\DeclareMathOperator{\condF}{\mathbf{c}_\textup{F}}
\DeclareMathOperator{\ct}{\mathrm{ct}}
\newcommand{\la}{\langle}
\newcommand{\ra}{\rangle}
\newcommand{\da}{{\downarrow}}
\newcommand{\ua}{{\uparrow}}
\newcommand{\imp}{\rightarrow}
\newcommand{\Imp}{\Rightarrow}
\newcommand{\Biimp}{\Leftrightarrow}
\newcommand{\llb}{\llbracket}
\newcommand{\rrb}{\rrbracket}
\newcommand{\iso}{\cong}
\newcommand{\niso}{\ncong}
\newcommand{\Nb}{\mathbb{N}}
\newcommand{\Zb}{\mathbb{Z}}
\newcommand{\Qb}{\mathbb{Q}}
\newcommand{\rst}{{\restriction}}
\newcommand{\keq}{\simeq}
\newcommand{\forae}{\forall^\infty}
\newcommand{\existsinf}{\exists^\infty}
\newcommand{\fct}{\mathrm{ct}}
\newcommand{\mc}[1]{\mathcal{#1}}
\newcommand{\mf}[1]{\mathfrak{#1}}
\newcommand{\ol}[1]{\overline{#1}}
\newcommand{\ora}[1]{\overrightarrow{#1}}
\newcommand{\wh}[1]{\widehat{#1}}
\newcommand{\leqT}{\leq_\mathrm{T}}
\newcommand{\geqT}{\geq_\mathrm{T}}
   \def\MR#1{}
\title[Effective powers of $\omega$ over $\Delta_2$ cohesive sets]{Effective powers of \texorpdfstring{$\omega$}{omega} over \texorpdfstring{$\Delta_2$}{Delta\_2} cohesive sets and infinite \texorpdfstring{$\Pi_1$}{Pi\_1} sets without \texorpdfstring{$\Delta_2$}{Delta\_2} cohesive subsets}
\author{Paul Shafer}
\address{School of Mathematics\\
University of Leeds\\
Leeds\\
LS2 9JT\\
United Kingdom}
\email{p.e.shafer@leeds.ac.uk}
\urladdr{https://peshafer.github.io}
\urladdr{https://orcid.org/0000-0001-5386-9218}
\date{\today}
\begin{document}

\begin{abstract}
Recall that an infinite subset of $\Nb$ is \emph{cohesive} if it cannot be split into two infinite pieces by a c.e.\ set.  Likewise, a set is \emph{r-cohesive} if it cannot be split by a computable set, and it is \emph{p-cohesive} if it cannot be split by a primitive recursive set.  A \emph{cohesive power} of a computable structure is an effective ultrapower of the structure, where a cohesive set plays the role of an ultrafilter.  Let $\omega$, $\zeta$, and $\eta$ denote the respective order-types of the natural numbers, the integers, and the rationals.  We study cohesive powers of computable copies of $\omega$ over $\Delta_2$ cohesive sets.  We show that there is a computable copy $\mc{L}$ of $\omega$ such that, for every $\Delta_2$ cohesive set $C$, the cohesive power of $\mc{L}$ over $C$ has order-type $\omega + \eta$.  This improves an earlier result of Dimitrov, Harizanov, Morozov, Shafer, A.~Soskova, and Vatev by generalizing from $\Pi_1$ cohesive sets to $\Delta_2$ cohesive sets and by computing a single copy of $\omega$ that has the desired cohesive power over all $\Delta_2$ cohesive sets.  Furthermore, our result is optimal in the sense that $\Delta_2$ cannot be replaced by $\Pi_2$.  More generally, we show that if $X \subseteq \Nb \setminus \{0\}$ is a Boolean combination of $\Sigma_2$ sets, thought of as a set of finite order-types, then there is a computable copy $\mc{L}$ of $\omega$ where the cohesive power of $\mc{L}$ over any $\Delta_2$ cohesive set has order-type $\omega + \bm{\sigma}(X \cup \{\omega + \zeta\eta + \omega^*\})$.  If $X$ is finite and non-empty, then there is also a computable copy $\mc{L}$ of $\omega$ where the cohesive power of $\mc{L}$ over any $\Delta_2$ cohesive set has order-type $\omega + \bm{\sigma}(X)$.  Here $\bm{\sigma}$ denotes the \emph{shuffle} operation.  An unexpected byproduct of our work is a new method for constructing infinite $\Pi_1$ sets that do not have $\Delta_2$ cohesive subsets.  In fact, we construct an infinite $\Pi_1$ set that does not have a $\Delta_2$ p-cohesive subset.  Infinite $\Pi_1$ sets without $\Delta_2$ r-cohesive subsets generalize D.\ Martin's classic co-infinite c.e.\ set with no maximal superset and have appeared in the work of Lerman, Shore, and Soare.
\end{abstract}

\maketitle

\section{Introduction}

Ultraproducts and ultrapowers are tools from mathematical logic with a wide variety of applications throughout mathematics \cites{BankstonSurvey, DiNassoGoldbringMartino, EklofSurvey, HensonIovinoSurvey, KeislerSurvey, MalliarisSurvey}.  The motivating idea behind the ultrapower construction originates with Skolem's construction of a countable non-standard model of arithmetic.  Let \mbox{$\mf{L} = (0, 1, <, +, \times)$} denote the language of arithmetic.  Let $\mc{N} = (\Nb; 0, 1, <, +, \times)$ denote the standard model of arithmetic, which has universe $\Nb$ and interprets the symbols $0$, $1$, $<$, $+$, and $\times$ in the usual ways.  Skolem's construction produces a countable $\mf{L}$-structure $\mc{M}$ that is \emph{elementarily equivalent} to $\mc{N}$ (i.e., satisfies the same first-order $\mf{L}$-sentences as $\mc{N}$) but is not isomorphic to $\mc{N}$.  It follows that $\mc{N}$ cannot be characterized among the countable $\mf{L}$-structures on the basis of first-order statements alone.

Skolem's construction works as follows.  For sets $X, Y \subseteq \Nb$, let $X \subseteq^* Y$ denote that $X \setminus Y$ is finite, and let $\ol{X} = \Nb \setminus X$ denote the complement of $X$.  It is not hard to show that for any countable sequence $\vec{A} = (A_n : n \in \Nb)$ of subsets of $\Nb$, there is an infinite set $C \subseteq^* \Nb$ such that for every $n$, either $C \subseteq^* A_n$ or $C \subseteq^* \ol{A}_n$.  Such a set $C$ is called \emph{cohesive} for $\vec{A}$, or simply \emph{$\vec{A}$-cohesive}.  There are only countably many arithmetically definable sets, so we may fix a set $C$ that is cohesive for this collection.  Thus for every arithmetical set $A$, either $C \subseteq^* A$ or $C \subseteq^* \ol{A}$.  Now consider the arithmetically definable functions $f \colon \Nb \to \Nb$.  Given two such functions $f$ and $g$, the set $\{n : f(n) = g(n)\}$ is arithmetical, so either $C \subseteq^* \{n : f(n) = g(n)\}$ or $C \subseteq^* \{n : f(n) \neq g(n)\}$.  Define $f =_C g$ if $C \subseteq^* \{n : f(n) = g(n)\}$, and notice that $=_C$ is an equivalence relation.  Let $[f]$ denote the $=_C$-equivalence class of $f$, and define an $\mf{L}$-structure $\mc{M}$ on the $=_C$-equivalence classes in the following way.  Interpret $0$ and $1$ as the equivalence classes of the constant functions $n \mapsto 0$ and $n \mapsto 1$.  Interpret $<$ by letting $[f] < [g]$ if and only if $C \subseteq^* \{n : f(n) < g(n)\}$.  Interpret $+$ and $\times$ by letting $[f] + [g] = [f + g]$ and $[f] \times [g] = [f \times g]$, where $f + g$ and $f \times g$ are computed pointwise.  One then shows that $\mc{M}$ is elementarily equivalent to $\mc{N}$.  Moreover, $\mc{M}$ is countable because there are only countably many arithmetical functions, and $\mc{M}$ is not isomorphic to $\mc{N}$ because $\mc{M}$ contains elements with infinitely many predecessors, such as the element represented by the identity function.

Nowadays we may think of Skolem's construction as effectivizing the ultrapower construction, where $C \subseteq^* A$ plays the role of $A$ being in the ultrafilter.  Skolem's construction only considers functions $f \colon \Nb \to \Nb$ of arithmetical complexity and only requires the $C \subseteq^* A$ or $C \subseteq^* \ol{A}$ dichotomy for sets $A$ of arithmetical complexity.  Restricting the complexity of the sequence $\vec{A}$ also induces a bound on the complexity that suffices to produce an $\vec{A}$-cohesive set $C$.  It becomes natural to ask what happens to Skolem's construction when one restricts to even lower complexities.  Call a set $C$:
\begin{itemize}
\item \emph{p-cohesive} if $C$ is cohesive for the collection of primitive recursive sets,
\smallskip
\item \emph{r-cohesive} if $C$ is cohesive for the collection of computable (aka recursive) sets, and
\smallskip
\item \emph{cohesive} if $C$ is cohesive for the collection of computably enumerable (aka recursively enumerable) sets.
\end{itemize}
Again let $\mc{N}$ denote the standard model of arithmetic.  Feferman, Scott, and Tennenbaum~\cite{FefermanScottTennenbaum} consider the effective ultrapowers of $\mc{N}$ obtained by restricting to computable functions $f$ and by using r-cohesive sets $C$ to take the powers.  They show that the resulting structures are never elementarily equivalent to $\mc{N}$.  In fact, they show that the resulting structures are never even models of Peano arithmetic.  Works such as~\cites{HirschfeldModels, HirschfeldWheelerBook, ShavrukovPrimePowers, McLaughlinRearrangement, McLaughlinCollectionFails, McLaughlinSurvey, McLaughlinUltrapowersModelsExtensions, McLaughlinTotallyRigid, McLaughlinEmbeddings, LermanCo-r-Max} continue studying flavors of effective ultrapowers of the particular structure $\mc{N}$, including powers over cohesive sets as discussed here as well as the so-called \emph{recursive ultrapowers} and \emph{r.e.\ ultrapowers}.  There is a particular focus on rigidity.  Hirschfeld and Wheeler~\cite{HirschfeldWheelerBook} show that the r.e.\ ultrapowers of $\mc{N}$ are rigid, McLaughlin~\cite{McLaughlinRearrangement} observes that the same holds of the recursive ultrapowers of $\mc{N}$, and later McLaughlin~\cite{McLaughlinTotallyRigid} shows that the recursive ultrapowers of $\mc{N}$ are totally rigid, meaning that they have no non-trivial isomorphic self-embeddings.  This line culminates in recent work of Shavrukov~\cite{ShavrukovPrimePowers}, which shows that the r.e.\ ultrapowers of $\mc{N}$ are totally rigid as well.  Shavrukov also introduces the \emph{r.e.\ prime powers} of $\mc{N}$ (which he shows are equivalent to the powers of $\mc{N}$ over cohesive sets) and shows that they need not always be totally rigid.

Dimitrov~\cite{DimitrovCohPow} generalizes from the particular structure $\mc{N}$ to an arbitrary computable structure $\mc{A}$, and he calls the effective ultrapower of $\mc{A}$ over a cohesive set $C$ the \emph{cohesive power} of $\mc{A}$ over $C$.  Cohesive powers of computable structures find applications to the lattice of c.e.\ subspaces of computable vector spaces in~\cites{DimitrovHarizanov, DimitrovLattices, DimitrovHarizanovMillerMourad}.  Rigidity is studied as well.  For example, Dimitrov, Harizanov, R.~Miller, and Mourad show that cohesive powers of the field of rational numbers over $\Pi_1$ cohesive sets are rigid~\cite{DimitrovHarizanovMillerMourad}.

In the classical case of the ultrapower of a structure $\mc{A}$ over an ultrafilter $\mc{U}$ (say on $\Nb$), the resulting structure may depend on the choice of the ultrafilter.  Consider, for example, the structure $(\Nb; <)$ of the natural numbers as a linear order.  If the continuum hypothesis fails, then there are $2^{2^{\aleph_0}}$ pairwise non-isomorphic linear orders that arise as ultrapowers of $(\Nb; <)$ over ultrafilters on $\Nb$~\cite{KramerShelahTentThomas}.  In the effective case, Lerman~\cite{LermanCo-r-Max} shows that when considering a cohesive power of $\mc{N}$ over a $\Pi_1$ cohesive set $C$, the resulting structure depends only on the many-one degree of $C$.  Let $C$ and $D$ be $\Pi_1$ cohesive sets.  If $C$ and $D$ are many-one equivalent, then the cohesive powers of $\mc{N}$ over $C$ and $D$ are isomorphic.  If $C$ and $D$ are not many-one equivalent, then the cohesive powers of $\mc{N}$ over $C$ and $D$ are not even elementarily equivalent.

Classically, if $f$ is an isomorphism between two structures $\mc{A}$ and $\mc{B}$ and $\mc{U}$ is an ultrafilter, then $f$ induces an isomorphism between the ultrapowers of $\mc{A}$ and $\mc{B}$ over $\mc{U}$.  In the effective case, if $f$ is a \emph{computable} isomorphism between two computable structures $\mc{A}$ and $\mc{B}$ and $C$ is a cohesive set, then $f$ induces an isomorphism between the cohesive powers of $\mc{A}$ and $\mc{B}$ over $C$.  It is essential that the isomorphism $f$ is computable.  In~\cite{CohPowJournal} it is shown that for every cohesive set $C$, there are computable structures $\mc{A}$ and $\mc{B}$ that are isomorphic (but not computably isomorphic) and are such that the cohesive powers of $\mc{A}$ and $\mc{B}$ over $C$ are not elementarily equivalent.  Indeed, $\mc{A}$ and $\mc{B}$ may be taken to be isomorphic to $(\Nb; <)$.

If we assume some level of effectivity of a cohesive set $C$, then it is possible to gain finer control over cohesive powers over $C$.  This leads to many examples of non-elementarily equivalent cohesive powers of isomorphic computable structures.  Let $\omega$, $\zeta$, and $\eta$ denote the order-types of the natural numbers, the integers, and the rationals, and let $+$ and $\cdot$ denote the usual sum and product of linear orders.  It is not hard to show that the cohesive power of the usual presentation $(\Nb; <)$ of $\omega$ over any cohesive set is a linear order of type $\omega + \zeta\eta$.  This is the expected outcome because $\omega + \zeta\eta$ is familiar as the order-type of countable non-standard models of Peano arithmetic.  On the other hand, in~\cite{CohPowJournal} it is shown that for every $\Pi_1$ cohesive set $C$, there is a computable linear order $\mc{L}$ of type $\omega$ such that the cohesive power of $\mc{L}$ over $C$ is a linear order of type $\omega + \eta$.  Thus $(\Nb; <)$ and $\mc{L}$ are isomorphic linear orders with non-elementarily equivalent cohesive powers over $C$.  In fact, given any $\Pi_1$ cohesive set $C$, there are countably many computable copies of $\omega$ whose cohesive powers over $C$ yield pairwise non-elementarily equivalent linear orders.  Let $\bm{\sigma}(X)$ denote the \emph{shuffle} of a non-empty and at-most-countable set of order-types $X$ (see Definition~\ref{def-Shuffle}), and let $\omega^*$ denote the reverse of $\omega$.  The most general result of~\cite{CohPowJournal} is the following, which we discuss further in Section~\ref{sec-CohPowIntro}.

\begin{OldMainTheorem}[\cite{CohPowJournal}*{Theorem~6.10}]
Let $X \subseteq \Nb \setminus \{0\}$ be a Boolean combination of $\Sigma_2$ sets, thought of as a set of finite order-types.  Let $C$ be a $\Pi_1$ cohesive set.  Then there is a computable copy $\mc{L}$ of $\omega$ where the cohesive power of $\mc{L}$ over $C$ has order-type $\omega + \bm{\sigma}(X \cup \{\omega + \zeta\eta + \omega^*\})$.  Moreover, if $X$ is finite and non-empty, then there is also a computable copy $\mc{L}$ of $\omega$ where the cohesive power of $\mc{L}$ over $C$ has order-type $\omega + \bm{\sigma}(X)$.
\end{OldMainTheorem}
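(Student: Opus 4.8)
The plan is to build the computable copy $\mc L$ of $\omega$ by a stagewise construction that may consult, as parameters, the c.e.\ enumeration $(\ol C_s)_{s\in\omega}$ of the complement of the given $\Pi_1$ cohesive set $C$ together with a computable approximation to $X$ having boundedly many mind-changes along $\emptyset'$ (such an approximation exists because a Boolean combination of $\Sigma_2$ sets is $d$-$\Sigma_2$ for some $d$ and each $\Sigma_2$ set is c.e.\ in $\emptyset'$; by the limit lemma this is unwound into an ordinary computable approximation at the price of some extra finite injury). Everything rests on a general description of the cohesive power $\prod_C\mc L$ that I would first record as a lemma: for any computable copy $\mc L$ of $\omega$ and any cohesive $C$, the classes of the constant functions form an initial segment of type $\omega$, every other element is the class of a partial computable $\varphi$ with $C\subseteq^*\dom\varphi$ whose $\mc L$-position tends to infinity along $C$, and for two such classes $[\varphi]\le_C[\psi]$ the interval between them in $\prod_C\mc L$ is finite exactly when $|\{x\in\mc L:\varphi(n)\preceq_{\mc L}x\preceq_{\mc L}\psi(n)\}|$ is bounded for $C$-almost all $n$; if no such bound exists, one manufactures infinitely many intermediate classes $\chi_k$ by letting $\chi_k(n)$ be the $\min(k,N(n))$-th $\mc L$-successor of $\varphi(n)$, where $N(n)$ counts the $\mc L$-elements strictly between $\varphi(n)$ and $\psi(n)$, and invoking cohesiveness of $C$ on the c.e.\ set $\{n:N(n)>k\}$. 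So the task reduces to shaping, in the limit, the $\mc L$-order near the ranges of partial computable functions on $C$, and controlling which such neighbourhoods admit a computable local successor function.

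Given this, I would construct $\mc L$ to have order-type $\omega$ but with its (necessarily non-computable) successor function computable only on a family of designated finite blocks. Fixing an effective enumeration $(q_i)$ of $\Qb$, the construction lays down infinitely many pairwise disjoint finite designated blocks inside $\mc L$ whose arrangement realizes, in the limit, a dense shuffle skeleton in which every rational interval eventually contains blocks of every target size, the target size of a block being read off the current approximation to $X$ so that it is revised only finitely often and each $s\in X$ is the final size of densely many blocks. Inside a designated block of final size $s$ the local order is kept computably transparent, so that for a function $\varphi_q$ tracking the left ends of the size-$s$ blocks of a fixed family the classes $[\varphi_q],[\varphi_q^{+1}],\dots,[\varphi_q^{+(s-1)}]$ are all present and pairwise distinct; between consecutive designated blocks one diagonalizes against every index $e$ to forbid bridging, ensuring that whenever $\varphi_e$ is total on a cofinite subset of $C$ there are infinitely many $n\in C$ for which $\varphi_e(n)$ is $\mc L$-separated from its nearby block by arbitrarily many elements entering $\mc L$ only after stage $n$ — this is where $(\ol C_s)$ is used, to schedule the appearance of the separating elements against the coordinates that $C$ actually contains. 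Finally one interleaves, densely into the skeleton, a reservoir whose internal presentation imitates the standard presentation of $\omega$ with an appended $\omega^*$-chain, which contributes densely many copies of $\omega+\zeta\eta+\omega^*$ to $\prod_C\mc L$; altogether the non-standard part of $\prod_C\mc L$ is $\bm\sigma(X\cup\{\omega+\zeta\eta+\omega^*\})$, as required.

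When $X$ is finite and non-empty the reservoir can be dispensed with: since there is no need to reserve room for finite blocks of unboundedly large size, the reservoir material is itself partitioned into designated finite blocks whose sizes are drawn from $X$ and occur densely, so that every non-standard element lies in a designated block and $\prod_C\mc L\cong\omega+\bm\sigma(X)$ with no extra summand in the shuffle.

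I expect the main obstacle to be reconciling the anti-bridging requirements with the demand that $\mc L\cong\omega$: keeping $\mc L$ a copy of $\omega$ forces every element to receive only finitely many $\mc L$-predecessors, so the separating elements inserted between designated blocks must appear early enough (relative to the coordinates in $C$, monitored through $(\ol C_s)$) to defeat a given potential bridging function, yet only finitely much such material may ever be placed below a fixed point, and all of this must survive the re-scheduling forced whenever the approximation to $X$ revises a block size. The second delicate point is the \emph{exactness} of the verification — showing that besides the designated blocks and the reservoir copies \emph{nothing else} appears in $\prod_C\mc L$, i.e.\ that no partial computable function can split a designated finite block, escape the reservoir, or create a block of a size not in $X$ — and this is precisely where the block-interval characterization, combined with cohesiveness of $C$ to turn ``infinitely often'' into ``almost always,'' simultaneously rules out spurious bridges and delivers the density needed for the shuffle.
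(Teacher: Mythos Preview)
Your proposal misidentifies where the real work lies. The paper (following \cite{CohPowJournal}) does not build $\mc{L}$ directly; it proceeds in two steps. First it constructs a computable \emph{colored} copy $\mc{O} = (R,\Nb;\prec_\mc{R},F)$ of $\omega$ such that $\prod_C \mc{O}$ is \emph{colorful}: between any two non-standard elements of $\prod_C\mc{R}$ there is an element of every solid color and at least one of a striped color. Second, it applies Lemma~\ref{lem-Shuffle}, which replaces each element of $\mc{R}$ by a finite block whose length is read off the color and the $\Sigma_2$-approximation to $X$. The colorful property is what guarantees that in $\prod_C\mc{L}$ every finite block type from $X$ (and the $\omega+\zeta\eta+\omega^*$ block, coming from striped colors) occurs \emph{densely} between any two non-standard blocks; the block boundaries are inherited from the dense non-standard part of $\prod_C\mc{R}\cong\omega+\eta$, so no separate ``anti-bridging'' is needed.

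Your plan has the active insertion pointed the wrong way. Laying out designated blocks along a $\Qb$-indexed skeleton and then diagonalizing ``between consecutive designated blocks'' to keep them separated does nothing to ensure that between two \emph{arbitrary} non-standard $[\psi]\prec[\varphi]$ in $\prod_C\mc{L}$ there is a block of each size $s\in X$: an arbitrary non-standard element is not attached to any rational in your skeleton, and the mere fact that $|(\psi(n),\varphi(n))_\mc{L}|\to\infty$ along $C$ does not force that interval to contain a size-$s$ block for infinitely many $n\in C$ when $X$ is infinite. What is required is exactly condition~\eqref{eq-MakeDense} of the paper (or the analogous \cite{CohPowJournal}*{Theorem~5.3}): for every pair $(\varphi_\ell,\varphi_r)$ one must \emph{insert} elements of every relevant type into $(\varphi_\ell(n),\varphi_r(n))_\mc{L}$ for almost every $n\in C$, and the genuine obstacle is doing this for all pairs simultaneously while keeping only finitely many elements $\prec_\mc{L}$-below each $u$. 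Your anti-bridging machinery is fighting a problem the paper never has, while the density problem it does have---and which drives the whole construction---is absent from your outline.
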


In Theorem~\ref{thm-BCSigma2ShuffleOld}, the $\Pi_1$ cohesive set $C$ is fixed in advance, and the computable copy $\mc{L}$ of $\omega$ is built so that the cohesive power of $\mc{L}$ over $C$ has a particular order-type.  The goal of this work is to improve Theorem~\ref{thm-BCSigma2ShuffleOld} to a form that is optimal with respect to the complexity of $C$.  First, we generalize from $\Pi_1$ cohesive sets to $\Delta_2$ cohesive sets.  Second, and most significantly, we swap the quantifiers on $\mc{L}$ and $C$ by showing that there is a fixed computable copy $\mc{L}$ of $\omega$ for which the cohesive power of $\mc{L}$ over $C$ has the desired order-type for \emph{every} $\Delta_2$ cohesive set $C$.  Our main result is the following.

\begin{NewMainTheorem}
Let $X \subseteq \Nb \setminus \{0\}$ be a Boolean combination of $\Sigma_2$ sets, thought of as a set of finite order-types.  Then there is a computable copy $\mc{L}$ of $\omega$ where the cohesive power of $\mc{L}$ over any $\Delta_2$ cohesive set has order-type $\omega + \bm{\sigma}(X \cup \{\omega + \zeta\eta + \omega^*\})$.  Moreover, if $X$ is finite and non-empty, then there is also a computable copy $\mc{L}$ of $\omega$ where the cohesive power of $\mc{L}$ over any $\Delta_2$ cohesive set has order-type $\omega + \bm{\sigma}(X)$.
\end{NewMainTheorem}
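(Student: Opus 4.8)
Before seeing the author's argument, here is how I would try to prove the New Main Theorem.

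\smallskip
\emph{Recasting the goal as a statement about $\mathcal{L}$ alone.} Fix a computable copy $\mathcal{L}$ of $\omega$, and write $\prod_C\mathcal{L}$ for its cohesive power over a $\Delta_2$ cohesive set $C$. As usual, $\prod_C\mathcal{L}$ is a linear order whose \emph{standard part} (the classes of $=_C$-bounded functions) is an initial segment of type $\omega$, and whose nonstandard part consists of the classes $[f]$ of computable functions $f$ whose values are cofinal in $\mathcal{L}$ along $C$. The pivotal point is that although $<_{\mathcal{L}}$ is computable, the $\mathcal{L}$-successor function $s_{\mathcal{L}}$ need not be: the relation ``$y$ is the $\mathcal{L}$-successor of $x$'' is only $\Pi_1$. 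A short argument using cohesiveness shows that a nonstandard $[f]$ has an immediate $\prod_C\mathcal{L}$-successor if and only if some computable $g$ satisfies $g=_C s_{\mathcal{L}}\circ f$, i.e. if and only if $C\subseteq^* E_{f,g}$ for the $\Pi_1$ set $E_{f,g}=\{n: g(n)\text{ is the }\mathcal{L}\text{-successor of }f(n)\}$; iterating, finite blocks of size $k$ around $[f]$ are governed in the same way by the $\Pi_1$ sets $E^{(k)}_{f,g}=\{n:g(n)=s_{\mathcal{L}}^k(f(n))\}$, and cohesiveness of $C$ always resolves the dichotomy $C\subseteq^* E^{(k)}_{f,g}$ versus $C\subseteq^*\overline{E^{(k)}_{f,g}}$ (the complement being c.e.). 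So the theorem reduces to building one computable $\mathcal{L}$ of type $\omega$ so that, for every $\Delta_2$ cohesive $C$: (i) the designed finite blocks survive, (ii) no undesigned finite blocks appear, and (iii) the surviving blocks are shuffled.

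\smallskip
\emph{Requirements and the new device.} For (i) I would, cofinally in $\mathcal{L}$ and interleaved, plant ``$k$-gaps'' for each $k\in X$: maximal intervals of length $k$ whose elements are \emph{known} at the time of planting, so that the explicit computable functions $g_j(n)=(\text{$j$-th element of the $n$-th gap})$ give, for \emph{every} $C$, a nonstandard $[g_1]$ with $[g_1]<[g_2]=[s_{\mathcal{L}}\circ g_1]<\dots<[g_k]$ and with $[g_1]$ having no predecessor and $[g_k]$ no successor (because each gap is flanked by ``unstructured'' regions of $\mathcal{L}$ where $s_{\mathcal{L}}$ and its inverse are hard to compute). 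This part needs no hypothesis on $C$. The earlier $\Pi_1$-cohesive version of the theorem could afford to build $\mathcal{L}$ while watching $C$ (equivalently, enumerating a maximal set with complement $C$) and steer unwanted gaps away from $C$; here the construction must be oblivious to $C$. The substitute — and this is exactly the ``new method'' advertised in the abstract — is to make every $\Pi_1$ set $E^{(k)}_{f,g}$ that would represent an \emph{unintended} adjacency have no infinite $\Delta_2$ cohesive subset, so that cohesiveness forces $C\subseteq^*\overline{E^{(k)}_{f,g}}$ for all such $f,g$. Since a computable construction cannot diagonalize against $\emptyset'$-computable objects directly, I would fold into the construction a Martin-style ``for all, there exists'' module that endows these $\Pi_1$ sets with a uniform combinatorial property provably incompatible with containing a $\Delta_2$ cohesive set, generalizing Martin's co-infinite c.e.\ set with no maximal superset.

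\smallskip
\emph{Construction and verification.} The construction is a priority argument building $\mathcal{L}$ stage by stage as a growing finite linear order: at each stage, insert new points so as to keep the type $\omega$, serve the highest-priority positive requirement wanting to open or finalize a $k$-gap, and let the negative modules restrain insertions that would make some $E^{(k)}_{f,g}$ admit a $\Delta_2$ cohesive subset. Verification splits into: $\mathcal{L}$ is a computable linear order of type $\omega$; for every $\Delta_2$ cohesive $C$, each planted $k$-gap ($k\in X$) yields a genuine $k$-block (automatic, via the explicit $g_j$); no other finite blocks occur (cohesiveness together with the no-$\Delta_2$-cohesive-subset property); the functions climbing through the unstructured regions (e.g.\ the identity) land in blocks of type $\omega+\zeta\eta+\omega^*$, which the construction ensures is the only infinite block-type; and finally, by the standard characterization of shuffle sums, a countable linear order in which every element lies in a block of type in $X\cup\{\omega+\zeta\eta+\omega^*\}$ and all such block-types occur densely is $\bm{\sigma}(X\cup\{\omega+\zeta\eta+\omega^*\})$. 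For the ``moreover'' clause ($X$ finite and non-empty) I would instead build $\mathcal{L}$ as a dense interleaving of finite blocks of the sizes in $X$ with a computable block structure; then for a nonstandard $[f]$, the bounded offset $j(n)$ of $f(n)$ within its block is a computable function into $\{0,\dots,\max X-1\}$, so cohesiveness pins $C$ to a single value $j(n)=i_0$, whence $[f]$ equals a class landing at a fixed offset and so lies in a finite $X$-block; thus no $\omega+\zeta\eta+\omega^*$-block appears and $\prod_C\mathcal{L}\cong\omega+\bm{\sigma}(X)$. This argument needs $X$ finite because it relies on the block sizes being uniformly bounded.

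\smallskip
\emph{Main obstacle.} The hard part is the coexistence of the two kinds of action: the negative modules must defeat \emph{all} $\Delta_2$ cohesive sets — $\emptyset'$-computable objects — using only a computable construction of $\mathcal{L}$, while the positive modules are simultaneously creating finite gaps into which a prospective $\Delta_2$ cohesive set might try to settle. Making the Martin-style no-$\Delta_2$-cohesive-subset module robust enough to protect every accidental $\Pi_1$ set $E^{(k)}_{f,g}$ without disturbing the designed $k$-blocks, and organizing the priorities so that these two activities never permanently conflict, is where the real work lies; the order-type bookkeeping and the back-and-forth identification of the shuffle should then be routine.
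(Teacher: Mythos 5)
Your high-level diagnosis is on target in several respects: you correctly identify that adjacency in $\prod_C\mc{L}$ is governed by $\Pi_1$ sets, that the construction must be oblivious to $C$, that a Martin-style diagonalization against all $\Delta_2$ cohesive sets is the genuinely new ingredient (the paper makes this explicit in the remark after Corollary~\ref{cor-DenseNonstdDelta2}), and that a uniformly bounded computable block structure handles the finite-$X$ case. However, the proposal has a genuine gap, and its decomposition of the problem is different from the paper's in a way that matters.

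The paper factors the theorem as: (a) build one computable \emph{colored} copy $\mc{O}$ of $\omega$ so that $\prod_C\mc{O}$ is colorful for every $\Delta_2$ cohesive $C$ (Theorem~\ref{thm-ColorsDenseNonstd}); then (b) invoke Lemma~\ref{lem-Shuffle} from the earlier paper, which converts any such $\mc{O}$ into the desired copy $\mc{L}$ by replacing each point of $\mc{O}$ by a finite order whose length depends on its color, \emph{uniformly in $C$}. Thus the whole theorem reduces to achieving a single density property \eqref{eq-MakeDense}: for almost all $n\in C$, if $\psi(n)\prec_\mc{L}\varphi(n)$ then every color $d\le\max\{\varphi(n),\psi(n)\}$ occurs in $(\psi(n),\varphi(n))_\mc{L}$. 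The Martin-style machinery (staggered partitions, a uniform list of all $\Delta_2$-approximations, quadruples $\la\ell,r,e,N\ra$ with active sides and change-counters $\ct(q)$) is deployed \emph{inside} that construction to keep $\mc{L}\cong\omega$ while elements are repeatedly inserted to fill color gaps: new points are drawn from intersections of staggered partitions so that higher-priority pairs' guessed cohesive images are avoided, preventing cascades of insertions below a fixed point.

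Your proposal inverts the emphasis. You treat the central difficulty as \emph{killing unintended adjacencies} (forcing every accidental $E^{(k)}_{f,g}$ to have no $\Delta_2$ cohesive subset), and treat density of the designed blocks as something "cofinal and interleaved" planting plus bookkeeping will deliver. That is the step that would fail. Cofinality of $k$-gaps in $\mc{L}$ does not give density of $k$-blocks in $\prod_C\mc{L}$: density requires that for almost every $n\in C$ there is a $k$-gap inside $(\psi(n),\varphi(n))_\mc{L}$, for \emph{every} pair of nonstandard $[\psi]\prec[\varphi]$, and this is a $C$-dependent requirement on a construction that cannot see $C$. It is precisely the content of \eqref{eq-MakeDense} and of the whole priority argument in Theorem~\ref{thm-ColorsDenseNonstd}, including why new elements must be inserted dynamically below previously placed points (hence the $\omega$-preservation struggle). $\Sigma_1$-recursive saturation over a $\Delta_2$ cohesive set (Theorem~\ref{thm-SatDelta2}) does \emph{not} conjure this density for free; it gives, for instance, the striped color once all solid colors are dense, and dense finite condensation (Theorem~\ref{thm-DenseBlocks}), but the finite realizability needed to invoke saturation is exactly the density you must first build. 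Conversely, once the colors are dense, there is no need for a separate negative module attacking each $E^{(k)}_{f,g}$ — the absence of unintended finite blocks falls out of Lemma~\ref{lem-Shuffle}, and the infinite $\Pi_1$ set with no $\Delta_2$ cohesive subset is obtained afterwards as a corollary rather than being the engine of the argument. Your "moreover" argument has the same gap: it shows every nonstandard element sits in a finite $X$-block, but not that each block size in $X$ occurs densely between any two nonstandard elements, which again is the content of colorfulness.
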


In particular, there is a computable copy $\mc{L}$ of $\omega$ such that the cohesive power of $\mc{L}$ over any $\Delta_2$ cohesive set has order-type $\omega + \eta$.  This is the optimal statement of this form because for every computable copy $\mc{L}$ of $\omega$, there is a $\Pi_2$ cohesive set $C$ such that the cohesive power of $\mc{L}$ over $C$ is not isomorphic to $\omega + \eta$ (see Proposition~\ref{prop-SuccNonstdPi2} below).  Furthermore, Theorem~\ref{thm-BCSigma2ShuffleNew} can be used to generate a sequence of computable linear orders $\mc{L}^1, \mc{L}^2, \mc{L}^3, \dots$ all of type $\omega$ where $\prod_C \mc{L}^k$ and $\prod_D \mc{L}^m$ are not elementarily equivalent whenever $k \neq m$ and $C$ and $D$ are $\Delta_2$ cohesive sets (see Example~\ref{ex-NonEquivSeq} below).  Our focus on computable linear orders of type $\omega$ may seem narrow at first, but examples like this explain the point.  Unusual phenomena, such as isomorphic structures having non-elementarily equivalent powers, that do not occur in the classical setting may occur in the effective setting even for the most basic and familiar mathematical structures.

An unexpected byproduct of Theorem~\ref{thm-BCSigma2ShuffleNew} is a new method for constructing infinite $\Pi_1$ sets that do not have $\Delta_2$ cohesive subsets.  In fact, we construct an infinite $\Pi_1$ set that has no $\Delta_2$ p-cohesive subset.  A classic construction of D.\ Martin~\cite{Martin} first produced an infinite $\Pi_1$ set with no $\Pi_1$ cohesive subset.  In connection to questions about the automorphisms of the lattice of c.e.\ sets, Lerman, Shore, and Soare~\cite{LermanShoreSoare} later produced an infinite $\Pi_1$ set with the property of having no $\Delta_2$ r-cohesive subset, though this property was not noted explicitly at the time.  Recently, Shavrukov~\cite{ShavrukovComm} indicated further examples of infinite $\Pi_1$ sets without $\Delta_2$ r-cohesive subsets which are connected to the notion of a \emph{single-sky} prime filter in the lattice of c.e.\ sets modulo finite difference~\cite{ShavrukovDuality}.  In terms of the arithmetical hierarchy, an infinite $\Pi_1$ set having no $\Delta_2$ p-cohesive/r-cohesive/cohesive subset is optimal because every infinite $\Pi_1$ set has a $\Pi_2$ cohesive subset.  We also observe that there are infinite $\Pi_1$ sets that do not have $\Pi_1$ r-cohesive subsets but do have $\Delta_2$ cohesive subsets.

This article is organized as follows.  In Section~\ref{sec-CohPowIntro}, we recall basic computability theory, introduce cohesive products and powers, and summarize the results of~\cite{CohPowJournal}.  We also improve~\cite{CohPowJournal}*{Theorem~2.18} by showing that the cohesive product of a uniformly computable sequence of structures over a $\Delta_2$ cohesive set is always $\Sigma_1$-recursively saturated (Theorem~\ref{thm-SatDelta2}).  In Section~\ref{sec-NoCOH}, we isolate a direct construction of an infinite $\Pi_1$ set with no $\Delta_2$ p-cohesive subset.  We feel that the construction and its connection to the lattice of c.e.\ sets is of sufficient independent interest to make a self-contained discussion worthwhile.  Section~\ref{sec-NoCOH} also serves to introduce some of the techniques that are used to prove Theorem~\ref{thm-BCSigma2ShuffleNew}.  Finally, we prove Theorem~\ref{thm-BCSigma2ShuffleNew} in Section~\ref{sec-CohPowDelta2}.

\section{Computability, cohesive products and powers, and computable linear orders}\label{sec-CohPowIntro}

We present the necessary background material concerning computability, computable structures, and cohesive products and powers, with emphasis on computable linear orders.  We refer the reader to~\cites{LermanBook, RogersBook, SoareBookRE} for further background on computability theory, to ~\cites{AshKnightBook, MontalbanBook} for further background on computable structure theory, and to~\cite{RosBook} for further background on linear orders.

Our notation mostly follows that of the sources cited above.  Throughout, $\Nb$ denotes the natural numbers, especially when used as a domain of computation, and $\omega$ denotes the order-type of the linear order $(\Nb; <)$.  For each $n \geq 1$, $\la x_0, \dots, x_{n-1} \ra \colon \Nb^n \to \Nb$ denotes the usual computable bijective tupling function that is increasing in all coordinates.  For each $i < n$, $\pi_i$ denotes the corresponding computable projection function onto coordinate $i$ given by $\pi_i(\la x_0, \dots, x_{n-1} \ra) = x_i$.  Additionally, $\Nb^{<\Nb}$ denotes the set of finite sequences over $\Nb$, which has a computable bijective encoding over $\Nb$.  Partial computable functions are denoted by $\varphi$, $\psi$, etc.  For a partial computable function $\varphi$, $\varphi(n)\da$ denotes that $\varphi$ halts on input $n$ and produces an output, and $\varphi(n)\ua$ denotes that $\varphi$ does not halt on input $n$.  Let $(\varphi_e)_{e \in \Nb}$ denote the usual effective enumeration of all partial computable functions, and let $\varphi_{e,s}(n)$ denote the result (if any) of running $\varphi_e$ on input $n$ for $s$ computational steps.  Let $W_e = \dom(\varphi_e) = \{n : \varphi_e(n)\da\}$ denote the domain of $\varphi_e$ for each $e$.  A subset of $\Nb$ is \emph{computably enumerable} (\emph{c.e.}) if it is $W_e$ for some $e$.  These are exactly the sets that are $\Sigma_1$-definable in arithmetic.   Formally we consider functions $f \colon \Nb \to \Nb$, but via tupling and sequence encoding we may interpret any such function as a function $\Nb^m \to \Nb^n$ for some $m$ and $n$, as a function $\Nb^{<\Nb} \to \Nb^{<\Nb}$, and similar.  We also identify subsets of $\Nb$ with their characteristic functions as usual.  For functions $f,g \colon \Nb \to \Nb$, $f \leqT g$ denotes that \emph{$f$ Turing reduces to / is computable from $g$}, and $f'$ denotes the \emph{Turing jump} of $f$.  Recall Post's theorem, that a subset $A \subseteq \Nb$ is $\Delta_{n+1}$-definable in arithmetic relative to $f \colon \Nb \to \Nb$ if and only if $A \leqT f^{(n)}$, the $n$\textsuperscript{th} jump of $f$ (where $f^{(0)} = f$).  In particular, a set is $\Delta_2$-definable if and only if it is computable from $0'$, the jump of the empty set.  For a function $g \colon \Nb^2 \to \Nb$, say that $\lim_s g(n, s) = y$ if $\exists s_0 \; \forall s \geq s_0 \; (g(n, s) = y)$; and say that \emph{$\lim_s g(n, s)$ exists} if $\exists y\, (\lim_s g(n, s) = y)$.  Say that a function $g \colon \Nb^2 \to \Nb$ \emph{approximates} a function $h \colon \Nb \to \Nb$ if $\forall n\, (\lim_s g(n, s) = h(n))$.  The \emph{limit lemma} (see~\cite{SoareBookRE}*{Lemma~III.3.3}) says that for every $f, h \colon \Nb \to \Nb$, $h \leqT f'$ if and only if there is a $g \colon \Nb^2 \to \Nb$ with $g \leqT f$ that approximates $h$.  In this situation, we say that \emph{$g$ is a $\Delta_2$-approximation to $h$ relative to $f$}.

\subsection*{Computable structures, cohesive products, and cohesive powers}

Fix a computable language $\mf{L}$.  A \emph{computable $\mf{L}$-structure} $\mc{A}$ consists of a non-empty computable domain $A \subseteq \Nb$ and uniformly computable interpretations of the relation, function, and constant symbols of $\mf{L}$.  Often the domain of a structure $\mc{A}$ (computable or not) is denoted by $|\mc{A}|$.  Likewise, a \emph{uniformly computable sequence of $\mf{L}$-structures} $(\mc{A}_i : i \in \Nb)$ consists of a uniformly computable sequence $(A_i : i \in \Nb)$ of non-empty domains, where $A_i \subseteq \Nb$ for each $i$, along with uniformly computable interpretations all the symbols of $\mf{L}$ in the structures $(\mc{A}_i : i \in \Nb)$.  Equivalently, an $\mf{L}$-structure $\mc{A}$ is computable if its atomic diagram is computable, and a sequence of $\mf{L}$-structures $(\mc{A}_i : i \in \Nb)$ is uniformly computable if the corresponding sequence of atomic diagrams is uniformly computable.  More generally, a computable $\mf{L}$-structure is \emph{decidable} (\emph{$n$-decidable}) if its elementary diagram ($\Sigma_n$-elementary diagram) is computable, and a uniformly computable sequence of $\mf{L}$-structures is \emph{uniformly decidable} (\emph{uniformly $n$-decidable}) if the corresponding sequence of elementary diagrams ($\Sigma_n$-elementary diagrams) is uniformly computable.  Note that a $0$-decidable $\mf{L}$-structure is the same thing as a computable $\mf{L}$-structure, and a uniformly $0$-decidable sequence of $\mf{L}$-structures is the same thing as a uniformly computable sequence of $\mf{L}$-structures.

We present the definition of \emph{cohesive products} and \emph{cohesive powers} as in~\cite{CohPowJournal}.  See also~\cites{DimitrovCohPow, DimitrovHarizanovSurvey}.

\begin{Definition}\label{def-CohProd}
Let $\mf{L}$ be a computable language.  Let $(\mc{A}_n : n \in \Nb)$ be a uniformly computable sequence of $\mf{L}$-structures with corresponding uniformly computable sequence of non-empty domains $(|\mc{A}_n| : n \in \Nb)$.  Let $C \subseteq \Nb$ be a cohesive set.  The \emph{cohesive product of $(\mc{A}_n : n \in \Nb)$ over $C$} is the $\mf{L}$-structure $\prod_C \mc{A}_n$ defined as follows.

\begin{itemize}
\item Let $D$ be the set of partial computable functions $\varphi$ such that $\forall n \, (\varphi(n)\da \,\imp\, \varphi(n) \in |\mc{A}_n|)$ and $C \subseteq^* \dom(\varphi)$.

\medskip

\item For $\varphi, \psi \in D$, let $\varphi =_C \psi$ denote $C \subseteq^* \{n : \varphi(n)\da = \psi(n)\da\}$.  The relation $=_C$ is an equivalence relation on $D$.  Let $[\varphi]$ denote the equivalence class of $\varphi \in D$ with respect to $=_C$.

\medskip

\item The domain of $\prod_C \mc{A}_n$ is the set $|\prod_C \mc{A}_n| = \{[\varphi] : \varphi \in D\}$.

\medskip

\item 
Let $R$ be an $m$-ary relation symbol of $\mf{L}$.  For each $[\varphi_0], \dots, [\varphi_{m-1}] \in |\prod_C \mc{A}_n|$, define $R^{\prod_C \mc{A}_n}([\varphi_0], \dots, [\varphi_{m-1}])$ by
\begin{align*}
R^{\prod_C \mc{A}_n}([\varphi_0], \dots, [\varphi_{m-1}]) \;\;\Biimp\;\; C \subseteq^* \bigl\{n : R^{\mc{A}_n}(\varphi_0(n), \dots, \varphi_{m-1}(n))\bigr\}.
\end{align*}
Here, $R^{\mc{A}_n}(\varphi_0(n), \dots, \varphi_{m-1}(n))$ includes the condition that $\varphi_i(n)\da$ for each $i < m$.

\medskip

\item Let $f$ be an $m$-ary function symbol of $\mf{L}$.  For each $[\varphi_0], \dots, [\varphi_{m-1}] \in |\prod_C \mc{A}_n|$, let $\psi$ be the partial computable function defined by
\begin{align*}
\psi(n) \keq f^{\mc{A}_n}(\varphi_0(n), \dots, \varphi_{m-1}(n)),
\end{align*}
and notice that $C \subseteq^* \dom(\psi)$ because $C \subseteq^* \dom(\varphi_i)$ for each $i < m$.  Define $f^{\prod_C \mc{A}_n}$ by
\begin{align*}
f^{\prod_C \mc{A}_n}([\varphi_0], \dots, [\varphi_{m-1}]) = [\psi].
\end{align*}

\medskip

\item Let $c$ be a constant symbol of $\mf{L}$.  Let $\psi$ be the total computable function defined by $\psi(n) = c^{\mc{A}_n}$, and define $c^{\prod_C \mc{A}_n} = [\psi]$.
\end{itemize}

In the case where $\mc{A}_n$ is the same fixed computable structure $\mc{A}$ for every $n$, the cohesive product $\prod_C \mc{A}_n$ is called the \emph{cohesive power of $\mc{A}$ over $C$} and is denoted $\prod_C \mc{A}$.
\end{Definition}

As in the classical case, a computable structure $\mc{A}$ always naturally embeds into its cohesive powers.  For $a \in |\mc{A}|$, let $f_a$ be the total computable function with constant value $a$.  Then for any cohesive set $C$, the so-called \emph{canonical embedding} $a \mapsto [f_a]$ embeds $\mc{A}$ into $\prod_C \mc{A}$.

The extent to which analogs of {\L}o\'{s}'s theorem hold for cohesive products and powers depends on what formulas are decidable in the structures.  We make the following definitions as in~\cite{CohPowJournal}.  Say that a computable sequence of formulas $(\Phi_i : i \in \Nb)$ is \emph{uniformly decidable} in a uniformly computable sequence of structures $(\mc{A}_n : n \in \Nb)$ if there is an algorithm that, given an $n$, an $i$, a subformula $\Psi(\vec{y})$ of $\Phi_i$, and a sequence of parameters $\vec{a}$ from $|\mc{A}_n|$ of appropriate length, determines whether $\mc{A}_n \models \Psi(\vec{a})$.  That is, $(\Phi_i : i \in \Nb)$ is uniformly decidable in $(\mc{A}_n : n \in \Nb)$ if the set
\begin{align*}
\{\la n, i, \Psi(\vec{y}), \vec{a} \ra : \text{$\Psi$ is a subformula of $\Phi_i$} \;\land\; |\vec{a}| = |\vec{y}| \;\land\; \mc{A}_n \models \Psi(\vec{a})\}
\end{align*}
is computable.  In the case of a single formula $\Phi$ (and its subformulas), we say that $\Phi$ is \emph{uniformly decidable} in $(\mc{A}_n : n \in \Nb)$.

\begin{Lemma}[\cite{CohPowJournal}*{Lemma~2.5}]\label{lem-LosProdParamHelper}
Let $\mf{L}$ be a computable language, let $(\mc{A}_n : n \in \Nb)$ be a uniformly computable sequence of $\mf{L}$-structures, and let $C$ be a cohesive set.  Let $\Phi(\vec{x}, \vec{y}, v_0, \dots, v_{m-1})$ be a formula that is uniformly decidable in $(\mc{A}_n : n \in \Nb)$.

\begin{enumerate}[(1)]
\item\label{it-LosProdPramSig2Helper} For any $[\varphi_0], \dots, [\varphi_{m-1}] \in |\prod_C \mc{A}_n|$,
\begin{align*}
\prod\nolimits_C \mc{A}_n \models \exists \vec{x}\, \forall \vec{y}\, \Phi(\vec{x}, \vec{y}, [\varphi_0], \dots, [\varphi_{m-1}]) \;\;\Imp\;\; C \subseteq^* \bigl\{n : \mc{A}_n \models \exists \vec{x}\, \forall \vec{y}\, \Phi(\vec{x}, \vec{y}, \varphi_0(n), \dots, \varphi_{m-1}(n))\bigr\}.
\end{align*}

\medskip

\item\label{it-LosProdPramPi2Helper} For any $[\varphi_0], \dots, [\varphi_{m-1}] \in |\prod_C \mc{A}_n|$,
\begin{align*}
C \subseteq^* \bigl\{n : \mc{A}_n \models \forall \vec{x}\, \exists \vec{y}\, \Phi(\vec{x}, \vec{y}, \varphi_0(n), \dots, \varphi_{m-1}(n))\bigr\} \;\;\Imp\;\; \prod\nolimits_C \mc{A}_n \models \forall \vec{x}\, \exists \vec{y}\, \Phi(\vec{x}, \vec{y}, [\varphi_0], \dots, [\varphi_{m-1}]).
\end{align*}
\end{enumerate}
\end{Lemma}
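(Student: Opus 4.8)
The plan is to reduce the two implications to a single witness‑extraction argument, sitting on top of the basic {\L}o\'s‑style transfer for uniformly decidable matrices.

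\emph{Reduction of (1) to (2).} If $\Phi$ is uniformly decidable in $(\mc{A}_n : n \in \Nb)$, then so is $\neg\Phi$, since the subformulas of $\neg\Phi$ are $\neg\Phi$ together with the subformulas of $\Phi$, and one decides $\mc{A}_n \models \neg\Phi(\vec{a})$ by deciding $\mc{A}_n \models \Phi(\vec{a})$. As $\prod_C \mc{A}_n$ is a genuine $\mf{L}$-structure, $\neg(\exists\vec{x}\,\forall\vec{y}\,\Phi)$ is logically equivalent to $\forall\vec{x}\,\exists\vec{y}\,\neg\Phi$ in every $\mc{A}_n$ and in $\prod_C \mc{A}_n$. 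Taking contrapositives, (1) for $\Phi$ becomes: if $C \not\subseteq^* \{n : \mc{A}_n \models \exists\vec{x}\,\forall\vec{y}\,\Phi(\vec{x},\vec{y},\vec\varphi(n))\}$, then $\prod_C \mc{A}_n \models \forall\vec{x}\,\exists\vec{y}\,\neg\Phi$. Because $C$ is cohesive, $C \not\subseteq^* S$ is equivalent to $C \subseteq^* \ol{S}$ for every $S \subseteq \Nb$, and the complement of $\{n : \mc{A}_n \models \exists\vec{x}\,\forall\vec{y}\,\Phi(\vec{x},\vec{y},\vec\varphi(n))\}$ is $\{n : \mc{A}_n \models \forall\vec{x}\,\exists\vec{y}\,\neg\Phi(\vec{x},\vec{y},\vec\varphi(n))\}$; so the reformulated (1) for $\Phi$ is exactly (2) for $\neg\Phi$. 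Hence it suffices to prove (2) for an arbitrary uniformly decidable $\Phi$.

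\emph{The engine and the proof of (2).} I would first invoke (or, if it is not yet available, prove by the routine induction on complexity) the {\L}o\'s equivalence for a uniformly decidable $\Phi$: for all $[\vec\psi],[\vec\chi],[\vec\varphi]$ from $|\prod_C \mc{A}_n|$,
\[
\prod\nolimits_C \mc{A}_n \models \Phi([\vec\psi],[\vec\chi],[\vec\varphi]) \;\;\Biimp\;\; C \subseteq^* \{n : \mc{A}_n \models \Phi(\vec\psi(n),\vec\chi(n),\vec\varphi(n))\}.
\]
(The atomic case is Definition~\ref{def-CohProd}, once one checks that terms evaluate coordinatewise; the Boolean cases use that $C$ is cohesive, e.g.\ $C \subseteq^* \ol{S}$ iff $C \not\subseteq^* S$; the quantifier cases use uniform decidability of $\Phi$ to run a witness search — precisely the move made below.) Now assume $C \subseteq^* \{n : \mc{A}_n \models \forall\vec{x}\,\exists\vec{y}\,\Phi(\vec{x},\vec{y},\vec\varphi(n))\}$ and fix an arbitrary tuple $[\vec\psi] = ([\psi_0],\dots,[\psi_{k-1}])$ from the product. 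Define a partial computable function $\chi$ by: on input $n$, wait for every $\psi_i(n)$ and every $\varphi_j(n)$ to converge, then search — effectively, since each domain $|\mc{A}_n|$ is computable and $\Phi$ is uniformly decidable — for the least code of a tuple $\vec{b} \in |\mc{A}_n|^{\ell}$ with $\mc{A}_n \models \Phi(\vec\psi(n),\vec{b},\vec\varphi(n))$, and output that code; put $\chi_r = \pi_r\circ\chi$ for $r<\ell$. For every sufficiently large $n \in C$, all of $\psi_i(n),\varphi_j(n)$ converge (as $C \subseteq^* \dom\psi_i$ and $C \subseteq^* \dom\varphi_j$) and $\mc{A}_n \models \exists\vec{y}\,\Phi(\vec\psi(n),\vec{y},\vec\varphi(n))$ (instantiating the hypothesis at the tuple $\vec\psi(n) \in |\mc{A}_n|^k$), so the search succeeds; hence $C \subseteq^* \dom\chi = \dom\chi_r$, and $\chi_r(n) \in |\mc{A}_n|$ whenever defined, so each $[\chi_r] \in |\prod_C \mc{A}_n|$. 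By construction $C \subseteq^* \{n : \mc{A}_n \models \Phi(\vec\psi(n),\vec\chi(n),\vec\varphi(n))\}$, so the equivalence above gives $\prod_C \mc{A}_n \models \Phi([\vec\psi],[\vec\chi],[\vec\varphi])$, i.e.\ $\prod_C \mc{A}_n \models \exists\vec{y}\,\Phi([\vec\psi],\vec{y},[\vec\varphi])$. Since $[\vec\psi]$ was arbitrary, $\prod_C \mc{A}_n \models \forall\vec{x}\,\exists\vec{y}\,\Phi(\vec{x},\vec{y},[\vec\varphi])$, which is (2), and (1) then follows by applying (2) to $\neg\Phi$ as above.

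\emph{Main obstacle.} There is no deep difficulty; the one point needing care is the effectiveness of the witness search, which relies on $\Phi$ being uniformly decidable so that ``$\mc{A}_n \models \Phi(\vec\psi(n),\vec{b},\vec\varphi(n))$'' can be tested uniformly in $n$ and $\vec{b}$. This is also exactly why the lemma claims only implications rather than biconditionals, and only for the prefixes $\exists\vec{x}\,\forall\vec{y}$ and $\forall\vec{x}\,\exists\vec{y}$: a third alternation, or the reverse implications, would require a witness search against a genuinely quantified formula, which uniform decidability of $\Phi$ does not supply. The remainder is $\subseteq^*$ bookkeeping, absorbing together the finitely many ``bad'' $n$ arising from $\dom\psi_i$, $\dom\varphi_j$, and the hypothesis set.
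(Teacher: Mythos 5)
Your argument for item~\ref{it-LosProdPramPi2Helper} is correct: fix $[\vec\psi]$, run the witness search, conclude $[\chi_r] \in |\prod_C\mc{A}_n|$, and apply the {\L}o\'s equivalence for the uniformly decidable matrix $\Phi$. That part is sound and is the expected approach.

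The reduction of item~\ref{it-LosProdPramSig2Helper} to item~\ref{it-LosProdPramPi2Helper}, however, contains a genuine error. You assert that ``because $C$ is cohesive, $C \not\subseteq^* S$ is equivalent to $C \subseteq^* \ol{S}$ for every $S \subseteq \Nb$.'' That is false: cohesiveness supplies the dichotomy $C \subseteq^* W$ or $C \subseteq^* \ol{W}$ only for \emph{c.e.}\ sets $W$. No infinite set has the dichotomy for \emph{every} $S$ — listing $C$ as $c_0 < c_1 < \cdots$ and taking $S = \{c_0, c_2, c_4, \ldots\}$ splits $C$. And the $S$ you apply it to, namely $\{n : \mc{A}_n \models \exists\vec{x}\,\forall\vec{y}\,\Phi(\vec{x},\vec{y},\vec\varphi(n))\}$, is a $\Sigma_2$-complexity set, not c.e.\ in general, so cohesiveness does not decide it. Thus the contrapositive of~\ref{it-LosProdPramSig2Helper} does \emph{not} become item~\ref{it-LosProdPramPi2Helper} for $\neg\Phi$; the hypothesis of your contrapositive ($C \not\subseteq^* S$) is strictly weaker than the hypothesis of~\ref{it-LosProdPramPi2Helper} for $\neg\Phi$ ($C \subseteq^* \ol{S}$), which is precisely why the lemma states two one-directional implications rather than a biconditional.

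The fix is to prove~\ref{it-LosProdPramSig2Helper} directly, applying cohesiveness to a c.e.\ set rather than to the $\Sigma_2$ index set. Assume $\prod_C \mc{A}_n \models \exists\vec{x}\,\forall\vec{y}\,\Phi([\vec\varphi])$ and fix a witness $[\vec\psi]$, so $\prod_C \mc{A}_n \models \forall\vec{y}\,\Phi([\vec\psi],\vec{y},[\vec\varphi])$. Let
\[
T = \bigl\{n : \vec\psi(n)\da \,\land\, \vec\varphi(n)\da \,\land\, \mc{A}_n \models \exists\vec{y}\,\neg\Phi(\vec\psi(n),\vec{y},\vec\varphi(n))\bigr\},
\]
which is c.e.\ because $\Phi$ is uniformly decidable and the domains $|\mc{A}_n|$ are uniformly computable. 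By cohesiveness, $C \subseteq^* T$ or $C \subseteq^* \ol{T}$. If $C \subseteq^* T$, your witness search against $\neg\Phi$ produces a $[\vec\chi]$ with $\prod_C\mc{A}_n \models \neg\Phi([\vec\psi],[\vec\chi],[\vec\varphi])$, contradicting the choice of $[\vec\psi]$. So $C \subseteq^* \ol{T}$, and combined with $C \subseteq^* \dom\vec\psi \cap \dom\vec\varphi$ this gives $C \subseteq^* \{n : \mc{A}_n \models \forall\vec{y}\,\Phi(\vec\psi(n),\vec{y},\vec\varphi(n))\} \subseteq \{n : \mc{A}_n \models \exists\vec{x}\,\forall\vec{y}\,\Phi(\vec{x},\vec{y},\vec\varphi(n))\}$, which is~\ref{it-LosProdPramSig2Helper}. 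The moral: fixing the $\exists\vec{x}$-witness reduces you to a \emph{single} quantifier block over a uniformly decidable matrix, and for a single block the relevant ``bad'' index set is c.e., so cohesiveness applies; your contrapositive shortcut tried to apply it one alternation too high.
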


We obtain the following analogs of {\L}o\'{s}'s theorem from Lemma~\ref{lem-LosProdParamHelper}.  We abuse the terminology somewhat by saying that a formula is $\Delta_n$ if it is logically equivalent to both a $\Sigma_n$ formula and a $\Pi_n$ formula.

\begin{Theorem}[\cite{CohPowJournal}*{Theorem~2.7}]\label{thm-LosProdParam}
Let $\mf{L}$ be a computable language, let $(\mc{A}_i : i \in \Nb)$ be a uniformly $n$-decidable sequence of $\mf{L}$-structures, and let $C$ be a cohesive set.
\begin{enumerate}[(1)]
\item\label{it-LosProdPramSig2} Let $\Phi(v_0, \dots, v_{m-1})$ be a $\Sigma_{n+2}$ formula.  Then for any $[\varphi_0], \dots, [\varphi_{m-1}] \in |\prod_C \mc{A}_i|$,
\begin{align*}
\prod\nolimits_C \mc{A}_i \models \Phi([\varphi_0], \dots, [\varphi_{m-1}]) \;\;\Imp\;\; C \subseteq^* \bigl\{i : \mc{A}_i \models \Phi(\varphi_0(i), \dots, \varphi_{m-1}(i))\bigr\}.
\end{align*}

\medskip

\item\label{it-LosProdPramPi2} Let $\Phi(v_0, \dots, v_{m-1})$ be a $\Pi_{n+2}$ formula.  Then for any $[\varphi_0], \dots, [\varphi_{m-1}] \in |\prod_C \mc{A}_i|$,
\begin{align*}
C \subseteq^* \bigl\{i : \mc{A}_i \models \Phi(\varphi_0(i), \dots, \varphi_{m-1}(i))\bigr\} \;\;\Imp\;\; \prod\nolimits_C \mc{A}_i \models \Phi([\varphi_0], \dots, [\varphi_{m-1}]).
\end{align*}

\medskip

\item\label{it-LosProdPramDelta2} Let $\Phi(v_0, \dots, v_{m-1})$ be a $\Delta_{n+2}$ formula.  Then for any $[\varphi_0], \dots, [\varphi_{m-1}] \in |\prod_C \mc{A}_i|$,
\begin{align*}
\prod\nolimits_C \mc{A}_i \models \Phi([\varphi_0], \dots, [\varphi_{m-1}]) \;\;\Biimp\;\; C \subseteq^* \bigl\{i : \mc{A}_i \models \Phi(\varphi_0(i), \dots, \varphi_{m-1}(i))\bigr\}.
\end{align*}
\end{enumerate}
\end{Theorem}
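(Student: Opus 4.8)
The plan is to reduce each of the three clauses to Lemma~\ref{lem-LosProdParamHelper}. After putting a given $\Sigma_{n+2}$ (resp.\ $\Pi_{n+2}$) formula into prenex form — which changes neither its truth in any $\mc{A}_i$ nor its truth in $\prod_C \mc{A}_i$ — I would write it as $\exists \vec{x}\, \forall \vec{y}\, \Theta(\vec{x}, \vec{y}, v_0, \dots, v_{m-1})$ (resp.\ $\forall \vec{x}\, \exists \vec{y}\, \Theta(\vec{x}, \vec{y}, v_0, \dots, v_{m-1})$), where $\Theta$ is obtained by grouping the last $n$ quantifier blocks and so has complexity $\Sigma_n$ (resp.\ $\Pi_n$); when $n = 0$, $\Theta$ is quantifier-free.

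The one point requiring care is that $\Theta$, together with all of its subformulas, is uniformly decidable in $(\mc{A}_i : i \in \Nb)$ in the sense required by Lemma~\ref{lem-LosProdParamHelper}. Every subformula of $\Theta$ is $\Sigma_k$ or $\Pi_k$ for some $k \leq n$, and since $(\mc{A}_i : i \in \Nb)$ is uniformly $n$-decidable, the truth in $\mc{A}_i$ of any $\Sigma_n$ fact with parameters is uniformly computable; the truth of a $\Sigma_k$ or $\Pi_k$ fact with $k \leq n$ is then decided by negating if necessary and padding with dummy quantifiers to a $\Sigma_n$ fact. (When $n = 0$, uniform $0$-decidability is just uniform computability, and quantifier-free formulas are uniformly decidable outright.) Hence $\Theta$ is uniformly decidable in $(\mc{A}_i : i \in \Nb)$.

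With this in hand, the first two clauses follow immediately: applying the first part of Lemma~\ref{lem-LosProdParamHelper} to $\Theta$, with $\vec{x}, \vec{y}$ its leading quantified variables and $v_0, \dots, v_{m-1}$ instantiated by $[\varphi_0], \dots, [\varphi_{m-1}]$, gives the $\Sigma_{n+2}$ implication, and the second part of Lemma~\ref{lem-LosProdParamHelper} gives the $\Pi_{n+2}$ implication.

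For the $\Delta_{n+2}$ clause, fix a $\Sigma_{n+2}$ formula $\Phi_\Sigma$ and a $\Pi_{n+2}$ formula $\Phi_\Pi$ each logically equivalent to $\Phi$. If $\prod_C \mc{A}_i \models \Phi([\varphi_0], \dots, [\varphi_{m-1}])$, then also $\prod_C \mc{A}_i \models \Phi_\Sigma([\varphi_0], \dots, [\varphi_{m-1}])$, so the $\Sigma_{n+2}$ clause gives $C \subseteq^* \{i : \mc{A}_i \models \Phi_\Sigma(\varphi_0(i), \dots, \varphi_{m-1}(i))\}$, and this set equals $\{i : \mc{A}_i \models \Phi(\varphi_0(i), \dots, \varphi_{m-1}(i))\}$ because the equivalence holds in every $\mc{A}_i$. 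Conversely, if $C \subseteq^* \{i : \mc{A}_i \models \Phi(\varphi_0(i), \dots, \varphi_{m-1}(i))\}$, then the same holds with $\Phi_\Pi$ in place of $\Phi$, so the $\Pi_{n+2}$ clause gives $\prod_C \mc{A}_i \models \Phi_\Pi([\varphi_0], \dots, [\varphi_{m-1}])$, whence $\prod_C \mc{A}_i \models \Phi([\varphi_0], \dots, [\varphi_{m-1}])$. Beyond invoking Lemma~\ref{lem-LosProdParamHelper}, the only genuine subtlety is the uniform-decidability bookkeeping in the second paragraph; everything else is routine normal-form manipulation, so I expect no real obstacle.
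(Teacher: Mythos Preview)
Your proposal is correct and is exactly the approach the paper indicates: the paper states Theorem~\ref{thm-LosProdParam} as a citation and simply remarks that it is obtained from Lemma~\ref{lem-LosProdParamHelper}, which is precisely the reduction you carry out. The uniform-decidability bookkeeping you flag is the only nontrivial point, and you handle it correctly.
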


The analog of {\L}o\'{s}'s theorem achieves an extra quantifier in the case of sentences and cohesive powers.

\begin{Theorem}[\cite{CohPowJournal}*{Theorem~2.9}]\label{thm-LosGeneral}
Let $\mf{L}$ be a computable language, let $\mc{A}$ be an $n$-decidable $\mf{L}$-structure, and let $C$ be a cohesive set.

\begin{enumerate}[(1)]
\item\label{it-LosDelta3Sent} Let $\Phi$ be a $\Delta_{n+3}$ sentence.  Then $\mc{A} \models \Phi$ if and only if $\prod_C \mc{A} \models \Phi$.

\medskip

\item\label{it-LosSigma3Sent} Let $\Phi$ be a $\Sigma_{n+3}$ sentence.  If $\mc{A} \models \Phi$, then $\prod_C \mc{A} \models \Phi$.
\end{enumerate}
\end{Theorem}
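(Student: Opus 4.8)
The plan is to derive both parts of Theorem~\ref{thm-LosGeneral} from Theorem~\ref{thm-LosProdParam}, exploiting the one extra piece of structure that a cohesive power has over a general cohesive product: the canonical embedding $a \mapsto [f_a]$ of $\mc{A}$ into $\prod_C \mc{A}$ (see the discussion following Definition~\ref{def-CohProd}), where $f_a$ is the total computable function with constant value $a$. The key point is that a witness found in $\mc{A}$ for an existential quantifier can be transported into $\prod_C \mc{A}$ along a \emph{constant} function $f_a$; the set of indices at which the matrix formula is then satisfied in $\mc{A}$ is all of $\Nb$, so in particular it contains $C$ modulo finite difference, and this is exactly what lets Theorem~\ref{thm-LosProdParam} apply and thereby buys the extra quantifier over that result.

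I would prove part~\ref{it-LosSigma3Sent} first. Let $\Phi$ be a $\Sigma_{n+3}$ sentence, and put it into prenex form $\Phi = \exists x_0 \cdots \exists x_{k-1}\, \Psi(x_0, \dots, x_{k-1})$, where $\Psi$ is a $\Pi_{n+2}$ formula with free variables exactly $x_0, \dots, x_{k-1}$. Assume $\mc{A} \models \Phi$ and fix $a_0, \dots, a_{k-1} \in |\mc{A}|$ with $\mc{A} \models \Psi(a_0, \dots, a_{k-1})$. Since $f_{a_j}(i) = a_j$ for every $i$ and every $j < k$, we have $\bigl\{i : \mc{A} \models \Psi(f_{a_0}(i), \dots, f_{a_{k-1}}(i))\bigr\} = \Nb$, so certainly $C \subseteq^* \bigl\{i : \mc{A} \models \Psi(f_{a_0}(i), \dots, f_{a_{k-1}}(i))\bigr\}$. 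The constant sequence $(\mc{A}_i : i \in \Nb)$ given by $\mc{A}_i = \mc{A}$ is uniformly $n$-decidable because $\mc{A}$ is $n$-decidable, so Theorem~\ref{thm-LosProdParam} in the case of the $\Pi_{n+2}$ formula $\Psi$, applied with the parameters $[f_{a_0}], \dots, [f_{a_{k-1}}] \in |\prod_C \mc{A}|$, yields $\prod_C \mc{A} \models \Psi([f_{a_0}], \dots, [f_{a_{k-1}}])$. Hence $\prod_C \mc{A} \models \exists x_0 \cdots \exists x_{k-1}\, \Psi(x_0, \dots, x_{k-1})$, that is, $\prod_C \mc{A} \models \Phi$.

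Part~\ref{it-LosDelta3Sent} then follows formally. Let $\Phi$ be a $\Delta_{n+3}$ sentence. Then $\Phi$ is logically equivalent to some $\Sigma_{n+3}$ sentence, and $\neg\Phi$ is again $\Delta_{n+3}$ and hence also logically equivalent to some $\Sigma_{n+3}$ sentence. Applying part~\ref{it-LosSigma3Sent} to $\Phi$ gives $\mc{A} \models \Phi \imp \prod_C \mc{A} \models \Phi$. Applying part~\ref{it-LosSigma3Sent} to $\neg\Phi$ gives $\mc{A} \models \neg\Phi \imp \prod_C \mc{A} \models \neg\Phi$; since $\prod_C \mc{A}$ is a genuine $\mf{L}$-structure, this contraposes to $\prod_C \mc{A} \models \Phi \imp \mc{A} \models \Phi$. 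Combining the two implications gives $\mc{A} \models \Phi \Biimp \prod_C \mc{A} \models \Phi$.

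All of the substance is already contained in Theorem~\ref{thm-LosProdParam}, so I do not expect a real obstacle; the only things to verify are the routine normal-form bookkeeping (that a $\Sigma_{n+3}$ sentence has the form $\exists \vec{x}\, \Psi(\vec{x})$ with $\Psi$ a $\Pi_{n+2}$ formula, which is what makes Theorem~\ref{thm-LosProdParam} applicable) and the observation that a constant sequence of copies of $\mc{A}$ inherits uniform $n$-decidability from $\mc{A}$. As an alternative to deriving part~\ref{it-LosDelta3Sent} from part~\ref{it-LosSigma3Sent}, one could prove it directly in two symmetric halves: the direction $\mc{A} \models \Phi \imp \prod_C \mc{A} \models \Phi$ exactly as in part~\ref{it-LosSigma3Sent}, and the converse by writing $\Phi$ in its $\Pi_{n+3}$ normal form $\forall \vec{y}\, \Theta(\vec{y})$ with $\Theta$ a $\Sigma_{n+2}$ formula, instantiating the outer universal quantifier in $\prod_C \mc{A}$ at the canonical images $[f_{b_0}], \dots$ of arbitrary $b_0, \dots \in |\mc{A}|$, and then invoking Theorem~\ref{thm-LosProdParam} for the $\Sigma_{n+2}$ formula $\Theta$ together with the infinitude of $C$ to conclude $\mc{A} \models \Theta(\vec{b})$ for every such tuple.
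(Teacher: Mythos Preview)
Your argument is correct and is the standard derivation: use the canonical embedding to supply constant witnesses for the outer existential block of a $\Sigma_{n+3}$ sentence, reducing to the $\Pi_{n+2}$ case of Theorem~\ref{thm-LosProdParam}, and then obtain the $\Delta_{n+3}$ equivalence by applying this to both $\Phi$ and $\neg\Phi$. Note that the present paper does not actually prove Theorem~\ref{thm-LosGeneral}; it is quoted from~\cite{CohPowJournal}*{Theorem~2.9}, and the argument there is essentially the one you give.
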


It follows that the full analogs of {\L}o\'{s}'s theorem hold when the structures are uniformly decidable.  If $(\mc{A}_i : i \in \Nb)$ is a uniformly decidable sequence of structures, then the conclusion of Theorem~\ref{thm-LosProdParam} item~\ref{it-LosProdPramDelta2} holds for every first-order formula $\Phi$.  Similarly, if $\mc{A}$ is a decidable structure, then the conclusion of Theorem~\ref{thm-LosGeneral} item~\ref{it-LosDelta3Sent} holds for every first-order sentence $\Phi$.

In~\cite{CohPowJournal} it is shown (following~\cite{Nelson}) that cohesive products of uniformly decidable sequences of structures are recursively saturated and that, for $n > 0$, cohesive products of uniformly $n$-decidable sequences of structures are $\Sigma_n$-recursively saturated.  More interestingly, it is shown that we obtain an extra level of saturation as well as the $n=0$ case when the cohesive set is assumed to be $\Pi_1$:  cohesive products of uniformly $n$-decidable sequences of structures over $\Pi_1$ cohesive sets are $\Sigma_{n+1}$-recursively saturated~\cite{CohPowJournal}*{Theorem~2.18}.  We now show that the same holds for cohesive products over $\Delta_2$ cohesive sets:  cohesive products of uniformly $n$-decidable sequences of structures over $\Delta_2$ cohesive sets are $\Sigma_{n+1}$-recursively saturated.

Recall the definitions pertaining to saturation.  Let $\mf{L}$ be a language, let $\mc{A}$ be an $\mf{L}$-structure, and let $D \subseteq |\mc{A}|$ be a collection of parameters from $|\mc{A}|$.  Let $\mf{L}_D = \mf{L} \cup D$ be the language obtained by augmenting $\mf{L}$ with fresh constant symbols identified with the members of $D$.  A \emph{type (of $\mc{A}$) over $D$} is a set of $\mf{L}_D$-formulas $p(\vec{x}) = p(x_0, \dots, x_{m-1})$ in $m$ fixed variables $x_0, \dots, x_{m-1}$ that is finitely satisfied in $\mc{A}$:  for every $\Phi_0(\vec{x}), \dots, \Phi_{k-1}(\vec{x}) \in p(\vec{x})$, $\mc{A} \models \exists \vec{x} \, \bigwedge_{i < k} \Phi_i(\vec{x})$.  A type $p(\vec{x})$ of $\mc{A}$ over $D$ is \emph{realized} if there are $a_0, \dots, a_{m-1} \in |\mc{A}|$ such that for all $\Phi(\vec{x}) \in p(\vec{x})$, $\mc{A} \models \Phi(\vec{a})$.  A type is a \emph{$\Sigma_n$-type} if every formula in the type is $\Sigma_n$.  Now let $\mf{L}$ be a computable language.  An $\mf{L}$-structure $\mc{A}$ is \emph{recursively saturated} if it realizes every computable type over a finite set of parameters, and it is \emph{$\Sigma_n$-recursively saturated} if it realizes every computable $\Sigma_n$-type over a finite set of parameters.

Let $p(\vec{x})$ be a type of some structure $\mc{A}$ over parameters $D$.  For us, $D$ is always finite or countable, and we enumerate it as a sequence $\vec{c}$ of appropriate length.  We write $p(\vec{x}; \vec{c})$ for $p(\vec{x})$ and $\Phi(\vec{x}; \vec{c})$ for a formula of $p(\vec{x}; \vec{c})$ when we want to highlight the parameters.  Here, $\Phi(\vec{x}; \vec{c})$ is shorthand for $\Phi(\vec{x}; \vec{c} \rst k)$, where $\vec{c} \rst k$ is the shortest initial segment of $\vec{c}$ containing all the parameters appearing in $\Phi$.  We also write $\Phi(\vec{x}; \vec{y})$ for the $\mc{L}$-formula corresponding to $\Phi(\vec{x}; \vec{c})$, with fresh variables $\vec{y}$ in place of the parameters $\vec{c}$.

The following lemma extends~\cite{CohPowJournal}*{Lemma~2.17} by allowing $\Delta_2$ cohesive sets and by allowing infinite sequences of uniformly partial computable parameters.  Say that a sequence $([\psi_\ell] : \ell \in \Nb)$ of elements of some cohesive product is \emph{uniformly partial computable} if the sequence $(\psi_\ell : \ell \in \Nb)$ of representatives is uniformly partial computable.

\begin{Lemma}[Extending~\cite{CohPowJournal}*{Lemma~2.17}]\label{lem-SatDelta2}
Let $\mf{L}$ be a computable language, let $(\mc{A}_n : n \in \Nb)$ be a uniformly computable sequence of $\mf{L}$-structures, and let $C$ be a $\Delta_2$ cohesive set.  Let $p  = p(\vec{x}; \ora{[\psi]})$ be a computable type of $\prod_C \mc{A}_n$ over a uniformly partial computable sequence of parameters $\ora{[\psi]} = ([\psi_\ell] : \ell \in \Nb)$.  Assume that $p$ consists of formulas of the form $\exists \vec{z}\, \Phi(\vec{x}, \vec{z}; \ora{[\psi]})$ with computable enumeration $(\exists \vec{z}_i \, \Phi_i(\vec{x}, \vec{z}_i; \ora{[\psi]}) : i \in \Nb)$.  Further assume that the formulas $(\Phi_i(\vec{x}, \vec{z}_i; \vec{y}) : i \in \Nb)$ are uniformly decidable in the structures $(\mc{A}_n : n \in \Nb)$.  Then $\prod_C \mc{A}_n$ realizes $p$.
\end{Lemma}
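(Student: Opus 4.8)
The plan is to build a realization of $p$ directly, using the $\Delta_2$-ness of $C$ together with a tree-of-guesses argument in the style of the movable markers construction of Theorem~\ref{thm-NoCOH-helper}. Fix a $\Delta_2$-approximation $(C_s)_{s\in\Nb}$ to $C$ (so $\lim_s C_s(n) = C(n)$ for every $n$), fix the uniformly partial computable representatives $(\psi_\ell)_{\ell\in\Nb}$, and fix the computable enumeration $(\exists\vec z_i\,\Phi_i(\vec x,\vec z_i;\ora{[\psi]}):i\in\Nb)$ of $p$. The goal is to construct a partial computable function $\varphi$ (more precisely, a tuple $\vec\varphi = (\varphi_0,\dots,\varphi_{m-1})$) with $C\subseteq^*\dom(\varphi_j)$ for each $j$, such that $[\vec\varphi]$ realizes $p$ in $\prod_C\mc A_n$. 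By Lemma~\ref{lem-LosProdParamHelper}\eqref{it-LosProdPramPi2Helper}, it suffices to guarantee for each $i$ that $C\subseteq^*\{n:\mc A_n\models\exists\vec z_i\,\Phi_i(\vec\varphi(n),\vec z_i;\vec\psi(n))\}$, i.e.\ that on almost every element of $C$, the structure $\mc A_n$ witnesses the $i$-th requirement with the values we have committed to. Finite satisfiability of $p$ in the cohesive product, combined with Theorem~\ref{thm-LosProdParam}\eqref{it-LosProdPramSig2}, feeds back (for each finite initial segment of $p$) a density fact: $C\subseteq^*\{n:\mc A_n\models\exists\vec x\,\vec z_0\cdots\vec z_{k-1}\,\bigwedge_{i<k}\Phi_i\}$, so cofinitely many $n\in C$ really do carry common witnesses for the first $k$ requirements.

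First I would set up the construction so that $\varphi_j(n)$, once it is defined, is never redefined, and so that at each stage $s$ we work only with the current approximation $C_s$ restricted to a finite window $[0,s]$ and with the finitely many $i<s$ requirements, the finitely many representatives $\psi_{\ell,s}$, and the finitely much computed atomic data of $\mc A_0,\dots,\mc A_s$. For $n\in\Nb$ we associate a guess as to which finite initial segment of $p$ is "live at $n$": because $C$ is only $\Delta_2$, whether $n\in C$ is not decidable, so I would instead run a priority/finite-injury argument where requirement $i$ has a movable marker pointing at the least-so-far unused potential witness location, and when the approximation to $C$ and to the parameters and to the atomic diagrams of the $\mc A_n$ stabilizes enough (past a stage depending on $i$ and on finitely many lower-priority stabilizations), requirement $i$ commits the values $\vec\varphi$ on that location to a genuine tuple of witnesses found inside $\mc A_n$ via the uniform decidability of the $\Phi_i$. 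The counters $\ct(\cdot)$ play the role they did in Theorem~\ref{thm-NoCOH-helper}: each requirement may be injured (forced to re-guess) only finitely often, because the $\Delta_2$-approximation to $C$ changes its mind only finitely often on each fixed $n$, the partial computable $\psi_\ell$ converge, and the atomic diagrams are computable; past the last injury the marker settles.

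The verification then has two halves. For totality on $C$: I would argue by induction on $i$ that each marker for requirement $i$ moves only finitely often and that past its last movement, requirement $i$ has committed $\vec\varphi$ on cofinitely many $n\in C$ (using the density fact from Theorem~\ref{thm-LosProdParam}\eqref{it-LosProdPramSig2} applied to the initial segment $\{j:j\le i\}$ of $p$ to know that witnesses exist on cofinitely many $n\in C$, hence that the search the requirement performs succeeds). Because the requirements act on disjoint witness locations and collectively cover cofinitely many $n$, we get $C\subseteq^*\dom(\varphi_j)$ for each $j<m$. For correctness: fix $i$; by the totality half, for cofinitely many $n\in C$ the values $\vec\varphi(n)$ were chosen at a stage after the relevant stabilizations precisely so that $\mc A_n\models\Phi_i(\vec\varphi(n),\vec z_i;\vec\psi(n))$ for suitable $\vec z_i$, hence $C\subseteq^*\{n:\mc A_n\models\exists\vec z_i\,\Phi_i(\vec\varphi(n),\vec z_i;\vec\psi(n))\}$, and Lemma~\ref{lem-LosProdParamHelper}\eqref{it-LosProdPramPi2Helper} gives $\prod_C\mc A_n\models\exists\vec z_i\,\Phi_i([\vec\varphi],\vec z_i;\ora{[\psi]})$. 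Since $i$ was arbitrary, $[\vec\varphi]$ realizes $p$.

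I expect the main obstacle to be the interaction between the $\Delta_2$-approximation to $C$ and the requirement that $\vec\varphi$, once defined on $n$, stays defined with the \emph{same} value forever: a requirement that commits $\vec\varphi(n)$ at a stage when the approximation wrongly believes $n\in C$ cannot uncommit, so the construction must be arranged so that the \emph{amount} of wrongly-committed data is finite (it contributes only finitely much to $\dom(\varphi_j)\setminus C$, which is harmless) while the \emph{correctly}-committed data is cofinite in $C$. This is exactly the bookkeeping that the counters and the "act only past the last injury" discipline handle in Theorem~\ref{thm-NoCOH-helper}, and the same device applies here; the extra wrinkle is that here a requirement also needs the atomic/decidability data of $\mc A_n$ to have converged and the parameters $\psi_\ell(n)$ to have halted before it commits, but all of these are limits of computable approximations on each fixed $n$, so they stabilize past a finite stage just as the $C$-approximation does. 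The case $n=0$ and the gain of one quantifier level over the generic uniformly-$n$-decidable argument come, as in the $\Pi_1$ case of~\cite{CohPowJournal}*{Lemma~2.17}, from the fact that we can decide "guessed membership in $C$" in the limit rather than only enumerate a $\Pi_1$ condition, which is what lets us use Lemma~\ref{lem-LosProdParamHelper}\eqref{it-LosProdPramPi2Helper} at the $\exists\vec z$ level rather than needing an additional decidable quantifier from the structures.
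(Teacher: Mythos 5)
Your proposal identifies all of the right ingredients: build a partial computable $\vec\varphi$ with $C\subseteq^*\dom(\varphi_j)$, never revoke a commitment $\vec\varphi(n)$, use a $\Delta_2$-approximation to $C$ to drive the construction, derive density of witness-carrying $n\in C$ for each finite initial segment of $p$ via the $\Sigma$-direction of the {\L}o\'s analog (Lemma~\ref{lem-LosProdParamHelper}\eqref{it-LosProdPramSig2Helper}), and close via the $\Pi$-direction (Lemma~\ref{lem-LosProdParamHelper}\eqref{it-LosProdPramPi2Helper}). That is also the skeleton of the paper's proof. Where you diverge is in the mechanics: you propose to import the movable-markers / counters / finite-injury apparatus of Theorem~\ref{thm-NoCOH-helper}, with requirement $i$ owning a marker pointing at a candidate witness location and acting ``only past the last injury.'' The paper does something structurally simpler and, I think, cleaner. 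It forms the nested uniformly c.e.\ sets
\begin{align*}
U_k = \left\{\la \vec a, n\ra : \mc A_n \models \bigwedge_{i<k}\exists\vec z_i\,\Phi_i(\vec a,\vec z_i;\psi(i,n))\right\},\qquad U_j\subseteq U_k \text{ for } k\le j,
\end{align*}
and runs a single ``covering'' loop: it tracks the least $k$ not yet covered by some $n$ with $f(n,s)=1$, $\theta(n)\da$, and $\la\theta(n),n\ra\in U_{k,s}$ (together with a one-step recovery case $k^1_s$ for when $f$ retracts on a previously covering $n$), and commits $\theta(n)$ for a fresh $n$ aimed at that $k$. Because the $U_k$ are nested, a single committed value $\theta(n)$ services every requirement $i<k$ simultaneously, so requirements never conflict, there is no priority ordering, and no injury counter is needed. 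The verification is a clean induction: every $k$ is eventually covered by some $n\in C$, from which both $C\subseteq^*\dom(\theta)$ and the density facts follow by cohesiveness.

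There is a genuine gap in your sketch worth flagging, beyond the difference in packaging. You write that requirement $i$ commits its values ``when the approximation \dots\ stabilizes enough (past a stage depending on $i$ and on finitely many lower-priority stabilizations).'' That cannot be a construction step: the stabilization point of a $\Delta_2$-approximation is not computable, so the algorithm cannot wait for it. The construction must act greedily on current (possibly wrong) approximations and let the verification, not the construction, exploit stabilization. Your later paragraph acknowledges the wasted-commitment issue and says ``the same device'' as Theorem~\ref{thm-NoCOH-helper} applies, but the counters there solve a different combinatorial problem (bounding how often an avoidance side flips so that lower-priority work is protected), and it is not clear they bound the waste here. The paper's resolution is that waste does not need to be bounded at all: each wasted $n$ simply fails to cover its target $k$, the $k^1_s$ case detects this, and a fresh $n$ is tried; the inductive claim that every $k$ is eventually covered by an $n\in C$ uses the density fact plus stabilization of $f$ on an initial segment, with no priority bookkeeping. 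If you want to pursue your markers-and-counters route, you would need to spell out precisely what ``injury'' is and prove the finite-injury claim, and I believe you would end up reinventing the nesting observation $U_j\subseteq U_k$ to avoid genuine conflicts between requirements.
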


\begin{proof}
As $p(\vec{x}; \ora{[\psi]})$ is a type,
\begin{align*}
\prod\nolimits_C \mc{A}_n \models \exists \vec{x}\, \bigwedge_{i < k} \exists \vec{z}_i\, \Phi_i(\vec{x}, \vec{z}_i; \ora{[\psi]}) 
\end{align*}
for each $k$.  To streamline the notation, let $\psi \colon \Nb^2 \to \Nb^{<\Nb}$ be the partial computable function given by $\psi(i,n) \keq \la \psi_0(n), \dots, \psi_{\ell_i-1}(n) \ra$, where $\ell_i$ is least such that $\ora{[\psi]} \rst \ell_i$ contains all the parameters of $\ora{[\psi]}$ appearing in $\Phi_i$.  Notice that $C \subseteq^* \{n : \psi(i,n)\da\}$ for every $i$.

Our goal is to partially compute a function $\theta \colon \Nb \to \Nb^m$ so that $C \subseteq^* \dom(\theta)$ and
\begin{align*}
\existsinf n \in C \; \Bigl( \mc{A}_n \models \exists \vec{z}_i\, \Phi_i(\theta(n), \vec{z}_i; \psi(i,n)) \Bigr) \tag{$*$}\label{eq-InfSat}
\end{align*}
for each $i$.  The set
\begin{align*}
\Bigl\{n : \mc{A}_n \models \exists \vec{z}_i\, \Phi_i(\theta(n), \vec{z}_i; \psi(i,n))\Bigr\}
\end{align*}
is c.e.\ for each $i$ because $\Phi_i$ is uniformly decidable in $(\mc{A}_n : n \in \Nb)$.  Thus \eqref{eq-InfSat} implies that 
\begin{align*}
\forae n \in C \; \Bigl( \mc{A}_n \models \exists \vec{z}_i\, \Phi_i(\theta(n), \vec{z}_i; \psi(i,n)) \Bigr)
\end{align*}
for each $i$ by cohesiveness.   Once $\theta$ has been defined, we let $\varphi_j = \pi_j \circ \theta$ for each $j < m$.  We then have that $[\varphi_0], \dots, [\varphi_{m-1}] \in |\prod_C \mc{A}_n|$ and that
\begin{align*}
\prod\nolimits_C \mc{A}_n \models \exists \vec{z}_i\, \Phi_i\left(\ora{[\varphi]}, \vec{z}_i; \ora{[\psi]} \right)
\end{align*}
for each $i$ by Lemma~\ref{lem-LosProdParamHelper} item~\ref{it-LosProdPramPi2Helper}.  Thus $[\varphi], \dots, [\varphi_{m-1}]$ realize $p(\vec{x}; \ora{[\psi]})$ in $\prod_C \mc{A}_n$.

Let $f \colon \Nb^2 \to \{0,1\}$ be a $\Delta_2$-approximation to the cohesive set $C$.  Let $(U_k : k \in \Nb)$ be the uniformly c.e.\ sequence of sets given by
\begin{align*}
U_k = \left\{\la \vec{a}, n \ra \in \Nb^m \times \Nb: \mc{A}_n \models \bigwedge_{i < k} \exists \vec{z}_i\, \Phi_i(\vec{a}, \vec{z}_i; \psi(i,n))\right\}
\end{align*}
with uniformly computable $\subseteq$-increasing enumerations $(U_{k,s})_{s \in \Nb}$ of finite sets for each $k$.  These enumerations are given in terms of strong indices, so membership in $U_{k,s}$ as well as its size are uniformly computable in $k$ and $s$.  The sequence $(U_k : k \in \Nb)$ is uniformly c.e.\ because the formulas $(\Phi_i : i \in \Nb)$ are uniformly decidable in $(\mc{A}_n : n \in \Nb)$.  Notice that if $k \leq j$, then $U_j \subseteq U_k$.  We can therefore arrange the enumerations so that if $k \leq j$, then $\forall s \, (U_{j, s} \subseteq U_{k, s})$.

We partially compute $\theta$ by computing an increasing sequence $\theta_0 \subseteq \theta_1 \subseteq \theta_2 \subseteq \cdots$ of finite approximations to $\theta$.  Start at stage $0$ with $\theta_0 = \emptyset$.  At stage $s$, we have $\theta_s$, and we define $\theta_{s+1}$.

Say that \emph{$n$ covers $k$ at stage $s$} if the following conditions hold.
\begin{enumerate}[(a)]
\item $n > k$.

\medskip

\item\label{it-SatCondInC} $f(n,s) = 1$.

\medskip

\item $\theta_s(n)\da$.

\medskip

\item $\la \theta_s(n), n \ra \in U_{k,s}$.
\end{enumerate}

If there is an $n$ that covers $k$ at stage $s$, then we say that $k$ is \emph{covered} at stage $s$.  Let $k^0_s$ be the least number that is not covered at the start of stage $s$.  If $s > 0$, then let $k^1_s$ be the least number (if it exists) for which there is an $n$ with $f(n,s) = 0$ that covered $k^1_s$ at stage $s-1$, but no $\wh{n} \leq n$ covers $k^1_s$ at stage $s$.  If $k^1_s$ is defined, then let $k_s = \min\{k^0_s, k^1_s\}$.  Otherwise, let $k_s = k^0_s$.  Now check if there is an $n < s$ meeting the following conditions.
\begin{enumerate}[(i)]
\item\label{it-nGreaterKs} $n > k_s$.

\medskip

\item $f(n,s) = 1$.

\medskip

\item $\theta_s(n)\ua$.

\medskip

\item\label{it-WitnessVec} There is an $\vec{a}$ with $\la \vec{a}, n \ra \in U_{k_s, s}$.
\end{enumerate}
If there is such an $n$, choose the least such $n$, choose the first corresponding $\vec{a}$ as in item~\ref{it-WitnessVec}, and extend $\theta_s$ to $\theta_{s+1}$ by setting $\theta_{s+1}(n) = \vec{a}$.  If there is no such $n$, then set $\theta_{s+1} = \theta_s$.  Now go to stage $s+1$.  This completes the definition of $\theta$.

Suppose that $n$ covers $k$ at some stage $s_0$.  Then the only way that $n$ could fail to cover $k$ at some stage $s > s_0$ is by the failure of condition~\ref{it-SatCondInC}.  If, however, $n \in C$, then there is a stage $s_1 > s_0$ such that $\forall s \geq s_1 \; (f(n,s) = 1)$.  Then $n$ covers $k$ at all stages $s \geq s_1$.  In this situation, we say that \emph{$k$ is covered by an $n \in C$}.  We show by induction on $k$ that every $k$ is eventually covered by an $n \in C$.

\begin{Claim*}
Every $k$ is eventually covered by an $n \in C$.
\end{Claim*}

\begin{proof}[Proof of Claim]
Proceed by induction on $k$.  Let $s_0$ be a stage by which all $\wh{k} < k$ have been covered by members of $C$.  Let $c$ be the greatest member of $C$ covering a $\wh{k} < k$ at stage $s_0$.  Let $s_1 > s_0$ be a stage by which $f$ has settled to its final value on all $n$ up to $c$:  $\forall n \leq c \; \forall s \geq s_1 \; (f(n,s) = C(n))$.  Then $k_s \geq k$ at all stages $s > s_1$.  By assumption,
\begin{align*}
\prod\nolimits_C \mc{A}_n \models \exists \vec{x}\, \bigwedge_{i < k} \exists \vec{z}_i\, \Phi_i\left(\vec{x}, \vec{z}_i; \ora{[\psi]} \right),
\end{align*}
and therefore
\begin{align*}
C \subseteq^* \left\{n : \mc{A}_n \models \exists \vec{x}\, \bigwedge_{i < k} \exists \vec{z}_i\, \Phi_i\left(\vec{x}, \vec{z}_i; \psi(i,n) \right)\right\}
\end{align*}
by Lemma~\ref{lem-LosProdParamHelper} item~\ref{it-LosProdPramSig2Helper}.  The lemma applies because prenexing the formula $\exists \vec{x}\, \bigwedge_{i < k} \exists \vec{z}_i\, \Phi_i(\vec{x}, \vec{z}_i; \vec{y})$ yields a formula of the form $\exists \vec{w}\, \Psi(\vec{w}; \vec{y})$, where $\Psi$ is uniformly decidable in $(\mc{A}_n : n \in \Nb)$.  Let $n_0$ be least with $n_0 > k$, $n_0 \in C$, $\theta_{s_1}(n_0)\ua$, and $\exists \vec{a} \, (\la \vec{a}, n_0 \ra \in U_k)$.  If $\theta_s(n_0)$ is defined for the first time during a stage $s > s_1$, then it is to cover a $j$ with $k \leq j < n_0$.  That is, if the value of $\theta_s(n_0)$ is determined at stage $s > s_1$, then it is chosen so that $\la \theta_s(n_0), n_0 \ra \in U_{j, s} \subseteq U_{k, s}$ for a $j \geq k$.  Therefore $n_0$ covers $k$ at any stage $s > s_1$ at which $\theta_s(n_0)\da$ and $f(n_0, s) = 1$.

Let $s_2 > \max\{n_0, s_1\}$ be large enough so that $\forall n \leq n_0 \; \forall s \geq s_2 \; (f(n,s) = C(n))$ and that $\exists \vec{a} \, (\la \vec{a}, n_0 \ra \in U_{k, s_2})$.  Consider stage $s_2$.  If $k$ is not covered at stage $s_2$, then it must be that $k_{s_2} = k$ and that $\theta_{s_2}(n_0)\ua$.  Furthermore, by choice of $n_0$ and $s_2$, $n_0 < s_2$ is the least number meeting conditions~\ref{it-nGreaterKs}--\ref{it-WitnessVec} at stage $s_2$.  Therefore $\theta_{s_2 + 1}(n_0)$ is defined to cover $k$ at stage $s_2$, so $k$ is covered by an element of $C$.

Suppose instead that $k$ is covered at stage $s_2$.  Let $n$ be the least number for which there is a stage $s_3 \geq s_2$ at which $n$ covers $k$.  If $n \in C$, then $k$ is covered by an element of $C$, as desired.  If $n \notin C$, then there is a least stage $s > s_3$ with $f(n,s) = 0$.  The number $n$ covers $k$ at stage $s-1$, but by the choice of $n$, no $\wh{n} \leq n$ covers $k$ at stage $s$.  Thus $k^1_s = k$, so $k_s = k$.  If $\theta_s(n_0)\da$, then $n_0$ must already cover $k$ as observed above.  If $\theta_s(n_0)\ua$, then $n_0 < s$ is the least number meeting conditions~\ref{it-nGreaterKs}--\ref{it-WitnessVec} at stage $s$.  Therefore $\theta_{s + 1}(n_0)$ is defined to cover $k$ at stage $s$, so $k$ is covered by an element of $C$.  This completes the proof of the claim.
\end{proof}

To complete the proof, consider the formula $\exists \vec{z}_i\, \Phi_i$.  By the claim, every $k$ is eventually covered by an $n \in C$.  Thus for every $k > i$, there is an $n > k$ with $n \in C$, $\theta(n)\da$, and $\la \theta(n), n \ra \in U_k$.  Thus $C \subseteq^* \dom(\theta)$ by cohesiveness, and
\begin{align*}
\existsinf n \in C \; \Bigl( \mc{A}_n \models \exists \vec{z}_i\, \Phi_i(\theta(n), \vec{z}_i; \psi(i,n)) \Bigr)
\end{align*}
as desired.
\end{proof}

\begin{Theorem}[Extending~\cite{CohPowJournal}*{Theorem~2.18}]\label{thm-SatDelta2}
Let $\mf{L}$ be a computable language, and let $C$ be a $\Delta_2$ cohesive set.
\begin{enumerate}[(1)]
\item\label{it-nDecRecSatSeqDelta2} Let $(\mc{A}_i : i \in \Nb)$ be a uniformly $n$-decidable sequence of $\mf{L}$-structures.  Then $\prod_C \mc{A}_i$ realizes every computable $\Sigma_{n+1}$-type over a uniformly partial computable sequence of parameters.  In particular, $\prod_C \mc{A}_i$ is $\Sigma_{n+1}$-recursively saturated.

\medskip

\item\label{it-nDecRecSatStrDelta2} Let $\mc{A}$ be an $n$-decidable $\mf{L}$-structure.  Then $\prod_C \mc{A}$ realizes every computable $\Sigma_{n+1}$-type over a uniformly partial computable sequence of parameters.  In particular, $\prod_C \mc{A}$ is $\Sigma_{n+1}$-recursively saturated.
\end{enumerate}
\end{Theorem}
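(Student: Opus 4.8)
The plan is to deduce both items directly from Lemma~\ref{lem-SatDelta2}, in the same way that \cite{CohPowJournal}*{Theorem~2.18} is deduced from \cite{CohPowJournal}*{Lemma~2.17}. I would first observe that item~\ref{it-nDecRecSatStrDelta2} is the instance of item~\ref{it-nDecRecSatSeqDelta2} in which $\mc{A}_i = \mc{A}$ for every $i$: the constant sequence $(\mc{A} : i \in \Nb)$ is uniformly $n$-decidable exactly when $\mc{A}$ is $n$-decidable, and $\prod_C \mc{A}_i = \prod_C \mc{A}$ in that case. I would also note that the two ``in particular'' clauses follow immediately from the main clauses, since any finite tuple of parameters from a cohesive product can be padded to a uniformly partial computable infinite sequence of parameters (and a type mentioning only the finite tuple is realized once it is realized over the padded sequence). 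Hence it suffices to prove: if $(\mc{A}_i : i \in \Nb)$ is uniformly $n$-decidable and $C$ is $\Delta_2$ cohesive, then $\prod_C \mc{A}_i$ realizes every computable $\Sigma_{n+1}$-type over a uniformly partial computable sequence of parameters.

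Given such a type $p = p(\vec x; \ora{[\psi]})$, I would put its formulas into a convenient normal form. Each $\Sigma_{n+1}$ formula is logically equivalent, by a uniformly computable syntactic transformation, to a prenex formula $\exists \vec z\, \Phi(\vec x, \vec z)$ with $\Phi$ a $\Pi_n$ formula. Applying this to a computable enumeration of $p$ produces a computable enumeration $(\exists \vec z_i\, \Phi_i(\vec x, \vec z_i; \ora{[\psi]}) : i \in \Nb)$ of $\mf{L}_{\ora{[\psi]}}$-formulas, each $\Phi_i$ being $\Pi_n$, such that $\exists \vec z_i\, \Phi_i$ is logically equivalent in every $\mf{L}$-structure to the $i$-th formula of $p$. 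Because logical equivalence preserves finite satisfiability and realization, the set $\{\exists \vec z_i\, \Phi_i(\vec x, \vec z_i; \ora{[\psi]}) : i \in \Nb\}$ is still a computable type of $\prod_C \mc{A}_i$ over $\ora{[\psi]}$, and any tuple realizing it realizes $p$. This reformulated type has exactly the shape of the types handled by Lemma~\ref{lem-SatDelta2}, so it remains only to verify the lemma's decidability hypothesis.

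The hypothesis to check is that the formulas $(\Phi_i(\vec x, \vec z_i; \vec y) : i \in \Nb)$ are uniformly decidable in $(\mc{A}_i : i \in \Nb)$, i.e., that one can uniformly decide $\mc{A}_n \models \Psi(\vec a)$ for any $i$, any subformula $\Psi(\vec y)$ of $\Phi_i$, and any parameter tuple $\vec a$ from $|\mc{A}_n|$. Since each $\Phi_i$ is a prenex $\Pi_n$ formula, every subformula of $\Phi_i$ is a $\Sigma_k$ or $\Pi_k$ formula for some $k \le n$. Uniform $n$-decidability gives uniformly computable $\Sigma_n$-elementary diagrams for the $\mc{A}_i$; closing under negation yields uniformly computable $\Pi_n$-elementary diagrams, and this subsumes the diagrams of all subformulas of lower complexity (for $n = 0$ the statement is just that the atomic diagrams are uniformly computable). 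Thus the decidability hypothesis holds, Lemma~\ref{lem-SatDelta2} applies, and $\prod_C \mc{A}_i$ realizes the reformulated type and hence $p$, which proves item~\ref{it-nDecRecSatSeqDelta2}; item~\ref{it-nDecRecSatStrDelta2} then follows as noted. The only real content in this argument is Lemma~\ref{lem-SatDelta2} itself; the sole step needing attention is the last one, namely matching the syntactic notion ``uniformly decidable'' (which quantifies over all subformulas) against the semantic input of $n$-decidability, and this is routine once the formulas are in prenex form and one uses closure of the decidable diagrams under negation.
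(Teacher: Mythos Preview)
Your proposal is correct and follows essentially the same approach as the paper: reduce item~\ref{it-nDecRecSatStrDelta2} to item~\ref{it-nDecRecSatSeqDelta2}, then deduce item~\ref{it-nDecRecSatSeqDelta2} from Lemma~\ref{lem-SatDelta2} by putting each $\Sigma_{n+1}$ formula of the type into prenex form $\exists \vec z_i\,\Phi_i$ with $\Phi_i$ a $\Pi_n$ formula and observing that uniform $n$-decidability makes the $\Phi_i$ uniformly decidable in the sequence of structures. Your write-up is somewhat more detailed than the paper's (you spell out the ``in particular'' clauses and the subformula-decidability check), but the argument is the same.
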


\begin{proof}
Item~\ref{it-nDecRecSatSeqDelta2} follows from Lemma~\ref{lem-SatDelta2}.  A computable $\Sigma_{n+1}$-type can be computably enumerated as $(\exists \vec{z}_j\, \Phi_j : j \in \Nb)$, where $\Phi_j$ is $\Pi_n$ for every $j$.  The formulas $(\Phi_j : j \in \Nb)$ are then uniformly decidable in the uniformly $n$-decidable sequence of structures $(\mc{A}_i : i \in \Nb)$.  Item~\ref{it-nDecRecSatStrDelta2} is the special case of item~\ref{it-nDecRecSatSeqDelta2} where $\mc{A}_i$ is $\mc{A}$ for each $i$.
\end{proof}

\subsection*{Cohesive products and powers of computable linear orders}

A \emph{linear order} $\mc{L} = (L; \prec)$ consists of a non-empty set $L$ equipped with a binary relation $\prec$ satisfying the following axioms.
\begin{itemize}
\item $\forall x \, (x \nprec x)$.

\smallskip

\item $\forall x \, \forall y \, \forall z \, \bigl( (x \prec y \,\land\, y \prec z) \;\imp\; x \prec z \bigr)$.

\smallskip

\item $\forall x \, \forall y \, (x \prec y \,\lor\, x = y \,\lor\, y \prec x)$.
\end{itemize}
Furthermore, a linear order $\mc{L}$ is \emph{dense} if $\forall x \, \forall y \, \exists z \, (x \prec y \;\imp\; x \prec z \prec y)$ and \emph{has no endpoints} if $\forall x \, \exists y \, \exists z \, (y \prec x \prec z)$.  A computable linear order $\mc{L} = (L; \prec_\mc{L})$ therefore consists of a non-empty computable set $L \subseteq \Nb$ and a computable relation ${\prec_\mc{L}} \,\subseteq\, L \times L$ that linearly orders $L$.

Let $\mc{L} = (L; \prec_\mc{L})$ be a linear order, and let $a, b \in L$.  Let $\min_{\prec_\mc{L}} \{a, b\}$ and $\max_{\prec_\mc{L}} \{a, b\}$ denote the minimum and maximum of $a$ and $b$ with respect to $\prec_\mc{L}$; let $(a,b)_\mc{L} = \{x \in L : a \prec_\mc{L} x \prec_\mc{L} b\}$ and $[a,b]_\mc{L} = \{x \in L : a \preceq_\mc{L} x \preceq_\mc{L} b\}$ denote the open and closed intervals defined by $a$ and $b$; and let $|(a,b)_\mc{L}|$ and $|[a,b]_\mc{L}|$ denote the cardinalities of the respective intervals.  It is convenient to allow $b \prec_\mc{L} a$ in the interval notation, in which case $(a,b)_\mc{L} = [a,b]_\mc{L} = \emptyset$.  Let $a \pprec_\mc{L} b$ denote that the interval $(a,b)_\mc{L}$ is infinite.

Recall that $\omega$ denotes the order-type of the natural numbers $(\Nb; <)$, that $\zeta$ denotes the order-type of the integers $(\Zb; <)$, and that $\eta$ denotes the order-type of the rationals $(\Qb; <)$, all with their usual orders.  We refer to $(\Nb; <)$, $(\Zb; <)$, and $(\Qb; <)$ as the \emph{usual presentations} of $\omega$, $\zeta$, and $\eta$.  For each $n \geq 1$, let $\bm{n}$ denote the order-type of the finite linear order $(\{0, 1, \dots, n-1\}; <)$.  For any order-type $\alpha$, a computable linear order of type $\alpha$ is called a \emph{computable copy} of $\alpha$.  A computable copy of $\omega$ is not necessarily isomorphic to the usual presentation of $\omega$ via a computable isomorphism.  Every countable dense linear order without endpoints has order-type $\eta$, and every computable copy of $\eta$ is computably isomorphic to the usual presentation of $\eta$ (see~\cite{RosBook}*{Theorem~2.8 and Exercise~16.4}).

Let $\mc{L}_0 + \mc{L}_1$ and $\mc{L}_0\mc{L}_1$ denote the usual sum and product of linear orders $\mc{L}_0$ and $\mc{L}_1$.  Let $\mc{L}^*$ denote the reverse of the linear order $\mc{L}$.  Furthermore, recall the \emph{generalized sum} of a sequence $(\mc{M}_\ell : \ell \in |\mc{L}|)$ of linear orders indexed by the elements of a linear order $\mc{L}$.

\begin{Definition}[see~\cite{RosBook}*{Definition~1.38}]\label{def-GenSum}
Let $\mc{L}$ be a linear order, and let $(\mc{M}_\ell : \ell \in |\mc{L}|)$ be a sequence of linear orders indexed by $|\mc{L}|$.  The \emph{generalized sum} $\sum_{\ell \in |\mc{L}|}\mc{M}_\ell$ of $(\mc{M}_\ell : \ell \in |\mc{L}|)$ over $\mc{L}$ is the linear order $\mc{S} = (S; \prec_\mc{S})$ defined as follows.  Write $\mc{L} = (L; \prec_\mc{L})$, and write $\mc{M}_\ell = (M_\ell; \prec_{\mc{M}_\ell})$ for each $\ell \in L$.  Define $S = \{(\ell, m) : \ell \in L \,\land\, m \in M_\ell\}$, and define
\begin{align*}
(\ell_0, m_0) \prec_{\mc{S}} (\ell_1, m_1) \quad\text{if and only if}\quad (\ell_0 \prec_\mc{L} \ell_1) \;\lor\; (\ell_0 = \ell_1 \,\land\, m_0 \prec_{\mc{M}_{\ell_0}} m_1).
\end{align*}
\end{Definition}

Generalized sums may be used to define the \emph{shuffle} of a non-empty and at-most-countable collection of linear orders.

\begin{Definition}[see~\cite{RosBook}*{Definition~7.14}]\label{def-Shuffle}
Let $X$ be a non-empty collection of linear orders with $|X| \leq \aleph_0$.  Let $f \colon \Qb \to X$ be a function such that $f^{-1}(\mc{M})$ is dense in $\Qb$ for each linear order $\mc{M} \in X$.  Let $\mc{S} = \sum_{q \in \Qb} f(q)$ be the generalized sum of the sequence $(f(q) : q \in \Qb)$ over $\Qb$.  By density, the order-type of $\mc{S}$ does not depend on the particular choice of $f$.  Therefore $\mc{S}$ is called the \emph{shuffle} of $X$ and is denoted $\bm{\sigma}(X)$.
\end{Definition}

In Definition~\ref{def-Shuffle}, it is helpful to identify each linear order in $X$ with a unique color and think of the function $f \colon \Qb \to X$ as a coloring of $\Qb$ in which every color occurs densely.  The shuffle $\bm{\sigma}(X)$ is then obtained by replacing each element of $\Qb$ by the linear order with which it is colored.  Also, we usually think of the $X$ in a shuffle $\bm{\sigma}(X)$ as a collection of order-types instead of as a collection of concrete linear orders.

If $\mc{L}$ is a computable linear order and $(\mc{M}_\ell : \ell \in |\mc{L}|)$ is a uniformly computable sequence of linear orders indexed by $|\mc{L}|$, then the pairing function may be used to compute a copy of $\sum_{\ell \in |\mc{L}|}\mc{M}_\ell$.  Furthermore, if $(\mc{M}_n : n \in \Nb)$ is a uniformly computable sequence of linear orders, then a computable dense coloring $f \colon \Qb \to \Nb$ of $\Qb$ may be used to compute a copy of $\bm{\sigma}(\{\mc{M}_n : n \in \Nb\})$.

Recall now the \emph{condensations} and in particular the \emph{finite condensation} of a linear order.  In general, a condensation of a linear order $\mc{L}$ is obtained by partitioning $\mc{L}$ into non-empty intervals and then by collapsing each interval to a point.

\begin{Definition}\label{def-Cond}
Let $\mc{L} = (L; \prec_\mc{L})$ be a linear order.  A \emph{condensation} of $\mc{L}$ is any linear order $\mc{M} = (M; \prec_\mc{M})$ obtained by partitioning $L$ into a collection $M$ of non-empty intervals and, for intervals $I, J \in M$, defining $I \prec_\mc{M} J$ if and only if $\forall a \in I \; \forall b \in J \; (a \prec_\mc{L} b)$.
\end{Definition}

\begin{Definition}\label{def-FinCond}
Let $\mc{L} = (L; \prec_\mc{L})$ be a linear order.  For $x \in L$, let $\condF(x)$ denote the set of $y \in L$ for which there are only finitely many elements between $x$ and $y$:
\begin{align*}
\condF(x) = \Bigl\{y \in L : \text{the interval $\bigl[\min\nolimits_{\prec_\mc{L}}\{x,y\}, \max\nolimits_{\prec_\mc{L}}\{x,y\}\bigr]_{\mc{L}}$ is finite} \Bigl\}.
\end{align*}
The set $\condF(x)$ is always a non-empty interval, as $x \in \condF(x)$.  The \emph{finite condensation} $\condF(\mc{L})$ of $\mc{L}$ is the condensation obtained from the partition $\{\condF(x) : x \in L\}$.
\end{Definition}

When a linear order $\mc{L} = (L; \prec_\mc{L})$ is partitioned into non-empty intervals as in Definition~\ref{def-Cond}, the intervals of the partition are called the \emph{blocks} of $\mc{L}$.  For the finite condensation of $\mc{L}$, the blocks are the sets of the form $\condF(x)$ for $x \in L$, each of which has order-type either $\omega$, $\omega^*$, $\zeta$, or $\bm{n}$ for some $n \geq 1$.  For $x, y \in L$, we also have that $\condF(x) \prec_{\condF(\mc{L})} \condF(y)$ if and only if $x \pprec_\mc{L} y$.

Linear orders are axiomatized by $\Pi_1$ sentences, so it follows from Theorem~\ref{thm-LosProdParam} item~\ref{it-LosProdPramPi2} that a cohesive product of a uniformly computable sequence of linear orders is again a linear order.  The properties of a linear order being dense and having no endpoints can each be expressed by $\Pi_2$ sentences, so also a cohesive product of a uniformly computable sequence of dense linear orders is a dense linear order, and a cohesive product of a uniformly computable sequence of linear orders without endpoints is a linear order without endpoints.

In~\cite{CohPowJournal} it is shown that cohesive powers commute with the sum, product, and reverse operations on computable linear orders.

\begin{Theorem}[\cite{CohPowJournal}*{Theorem~3.6}]\label{thm-CohPres}
Let $\mc{L}_0$ and $\mc{L}_1$ be computable linear orders, and let $C$ be a cohesive set.  Then
\begin{enumerate}[(1)]
\item\label{it-SumIso} $\prod_C(\mc{L}_0 + \mc{L}_1) \iso \prod_C\mc{L}_0 + \prod_C\mc{L}_1$,

\smallskip

\item\label{it-ProdIso} $\prod_C(\mc{L}_0\mc{L}_1) \iso \bigl( \prod_C\mc{L}_0 \bigr) \bigl( \prod_C\mc{L}_1\bigr)$, and

\smallskip

\item\label{it-RevIso} $\prod_C(\mc{L}_0^*) \iso \bigl( \prod_C\mc{L}_0 \bigr)^*$.
\end{enumerate}
\end{Theorem}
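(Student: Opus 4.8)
The plan is to prove each of the three isomorphisms by writing down an explicit map, exactly as in the classical proof that ultrapowers commute with sum, product, and reverse; the extra work is just bookkeeping around partiality together with the repeated use of cohesiveness to collapse a c.e.\ dichotomy (or trichotomy) to a single case. Since a computable isomorphism of computable structures induces an isomorphism of their cohesive powers over any cohesive set (as recalled in the introduction), it suffices to work with convenient computable copies of $\mc{L}_0 + \mc{L}_1$, $\mc{L}_0\mc{L}_1$, and $\mc{L}_0^*$. The reverse case~\ref{it-RevIso} is immediate: take the copy of $\mc{L}_0^*$ with the same domain as $\mc{L}_0$ and the reversed order. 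Then $\prod_C(\mc{L}_0^*)$ and $(\prod_C\mc{L}_0)^*$ have the same underlying set of $=_C$-classes, and $[\varphi] \prec [\psi]$ holds in $\prod_C(\mc{L}_0^*)$ iff $C \subseteq^* \{n : \psi(n) \prec_{\mc{L}_0} \varphi(n)\}$ iff $[\psi] \prec [\varphi]$ in $\prod_C\mc{L}_0$, so the identity map is the desired isomorphism.

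For the sum~\ref{it-SumIso}, use the copy of $\mc{L}_0 + \mc{L}_1$ on $\{\la 0,a\ra : a \in |\mc{L}_0|\} \cup \{\la 1,b\ra : b \in |\mc{L}_1|\}$, ordered so that $\la 0,a\ra \prec \la 1,b\ra$ always and so that $\prec$ restricts to $\prec_{\mc{L}_0}$ and $\prec_{\mc{L}_1}$ on the two summands. If $\varphi$ represents an element of $\prod_C(\mc{L}_0+\mc{L}_1)$, then $\{n : \varphi(n)\da \land \pi_0(\varphi(n)) = 0\}$ is c.e., so by cohesiveness (and $C \subseteq^* \dom(\varphi)$) exactly one of $C \subseteq^* \{n : \varphi(n)\da \land \pi_0(\varphi(n)) = 0\}$ and $C \subseteq^* \{n : \varphi(n)\da \land \pi_0(\varphi(n)) = 1\}$ holds. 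In the first case send $[\varphi]$ to $[\psi]$ in the left summand of $\prod_C\mc{L}_0 + \prod_C\mc{L}_1$, where $\psi(n) \keq \pi_1(\varphi(n))$ when $\varphi(n)\da$ with first coordinate $0$ and $\psi(n)\ua$ otherwise; act symmetrically in the second case. One then checks, using cohesiveness throughout, that this is well defined, a bijection (surjectivity witnessed by $n \mapsto \la 0,\psi(n)\ra$ and $n \mapsto \la 1,\psi(n)\ra$), and order preserving, the only notable point being that if $[\varphi]$ falls on the left and $[\varphi']$ on the right then $\pi_0(\varphi(n)) = 0 < 1 = \pi_0(\varphi'(n))$ for almost all $n \in C$, so $[\varphi] \prec [\varphi']$, matching the external sum.

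For the product~\ref{it-ProdIso}, realize $\mc{L}_0\mc{L}_1$ as the $|\mc{L}_1|$-indexed generalized sum of copies of $\mc{L}_0$ (Definition~\ref{def-GenSum}) on the domain $\{\la b,a\ra : b \in |\mc{L}_1|,\ a \in |\mc{L}_0|\}$, with $\prec$ lexicographic and the $\mc{L}_1$-coordinate dominant. Given $\varphi$ representing an element of $\prod_C(\mc{L}_0\mc{L}_1)$, put $\varphi_1 = \pi_0 \circ \varphi$ and $\varphi_0 = \pi_1 \circ \varphi$; these are partial computable with $C \subseteq^* \dom(\varphi_i)$ and take values in $|\mc{L}_1|$ resp.\ $|\mc{L}_0|$, so $[\varphi_1] \in \prod_C\mc{L}_1$ and $[\varphi_0] \in \prod_C\mc{L}_0$, and we send $[\varphi]$ to the pair $([\varphi_1],[\varphi_0])$, which is literally an element of $(\prod_C\mc{L}_0)(\prod_C\mc{L}_1)$. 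Well-definedness, injectivity, and surjectivity (via $n \mapsto \la\psi_1(n),\psi_0(n)\ra$) follow directly from the definition of $=_C$. For order preservation, apply cohesiveness to the three c.e.\ sets $\{n : \varphi_1(n)\da \prec_{\mc{L}_1} \varphi_1'(n)\da\}$, $\{n : \varphi_1(n)\da = \varphi_1'(n)\da\}$, and $\{n : \varphi_1'(n)\da \prec_{\mc{L}_1} \varphi_1(n)\da\}$, whose union almost contains $C$, so $C$ lies almost entirely in exactly one of them: in the first and third cases $[\varphi_1] \neq [\varphi_1']$ and, both in $\prod_C(\mc{L}_0\mc{L}_1)$ and in the external product, the order of $[\varphi]$ versus $[\varphi']$ is decided by that of $[\varphi_1]$ versus $[\varphi_1']$; in the middle case $[\varphi_1] = [\varphi_1']$ and, since $\varphi(n) \prec \varphi'(n) \biimp \varphi_0(n) \prec_{\mc{L}_0} \varphi_0'(n)$ for almost all $n \in C$, the order on both sides is decided by that of $[\varphi_0]$ versus $[\varphi_0']$.

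The arguments are straightforward verifications, and the only recurring obstacle is the partiality of the representatives: nothing in sight is a priori computable, only c.e., so at each branch point one must first invoke cohesiveness --- typically in the form ``a c.e.\ set or its complement almost contains $C$'', or a finite refinement of this --- before the desired equivalence can be read off. The product case is slightly the most delicate, since it needs the trichotomy on the $\mc{L}_1$-coordinates rather than a mere dichotomy, but it presents no real difficulty.
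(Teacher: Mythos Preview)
Your argument is correct and is exactly the natural one: fix convenient computable presentations of the sum, product, and reverse, and then verify directly that the obvious coordinate-wise maps are well-defined order isomorphisms, using cohesiveness at each branch point to collapse the relevant c.e.\ dichotomy or trichotomy to a single case. Note, however, that the present paper does not actually prove Theorem~\ref{thm-CohPres}; it is quoted from~\cite{CohPowJournal}, where the proof proceeds along precisely the lines you give. So your proposal matches the intended argument, and there is nothing to add.
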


In~\cite{CohPowJournal}, it is also shown that the finite condensation of the cohesive product of a uniformly computable sequence of linear orders over a $\Pi_1$ cohesive set is always dense.  Furthermore, it is shown that the finite condensation of the cohesive power of a computable copy of $\omega$ over a $\Pi_1$ cohesive set always has order-type $\bm{1} + \eta$.  We extend both results to $\Delta_2$ cohesive sets and recall a few helpful lemmas from~\cite{CohPowJournal}.

\begin{Lemma}[\cite{CohPowJournal}*{Lemma~3.7}]\label{lem-ImmedSucc}
Let $(\mc{L}_n : n \in \Nb)$ be a uniformly computable sequence of linear orders, let $C$ be a cohesive set, and let $[\psi]$ and $[\varphi]$ be elements of $\prod_C \mc{L}_n$.  Then the following are equivalent.
\begin{enumerate}[(1)]
\item\label{it-ImmedSuccInPow} $[\varphi]$ is the $\prec_{\prod_C \mc{L}_n}$-immediate successor of $[\psi]$.

\smallskip

\item\label{it-aeImmedSucc} $\forae n \in C \; \bigl(\text{$\varphi(n)$ is the $\prec_{\mc{L}_n}$-immediate successor of $\psi(n)$}\bigr)$.

\smallskip

\item\label{it-existsinfImmedSucc} $\existsinf n \in C \; \bigl(\text{$\varphi(n)$ is the $\prec_{\mc{L}_n}$-immediate successor of $\psi(n)$}\bigr)$.
\end{enumerate}
\end{Lemma}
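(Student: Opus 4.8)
The plan is to route all the implications through a single application of cohesiveness. The key observation is that the condition ``$\varphi(n)$ is the $\prec_{\mc{L}_n}$-immediate successor of $\psi(n)$'' becomes co-c.e.\ once we restrict to the c.e.\ set $D = \dom(\psi) \cap \dom(\varphi)$ on which both functions halt: for $n \in D$, the \emph{negation} of this condition is witnessed either by observing that $\lnot(\psi(n) \prec_{\mc{L}_n} \varphi(n))$ (decidable, since $(\mc{L}_n : n \in \Nb)$ is uniformly computable) or by finding some $z \in |\mc{L}_n|$ with $\psi(n) \prec_{\mc{L}_n} z \prec_{\mc{L}_n} \varphi(n)$. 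Let $S'$ be the resulting c.e.\ set of ``bad'' $n \in D$. Since $[\psi], [\varphi] \in |\prod_C \mc{L}_n|$ we have $C \subseteq^* D$, so cohesiveness applied to $S'$ yields the dichotomy: either $\forae n \in C$ the element $\varphi(n)$ is not the $\prec_{\mc{L}_n}$-immediate successor of $\psi(n)$, or else $(2)$ holds.

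This dichotomy delivers $(2) \Leftrightarrow (3)$ at once: $(2) \Rightarrow (3)$ because $C$ is infinite, and $(3)$ is incompatible with the first alternative, so it forces $(2)$. For $(2) \Rightarrow (1)$ I would argue directly from Definition~\ref{def-CohProd}: $(2)$ gives $\psi(n) \prec_{\mc{L}_n} \varphi(n)$ for almost all $n \in C$, hence $[\psi] \prec_{\prod_C \mc{L}_n} [\varphi]$; and if some $[\theta]$ satisfied $[\psi] \prec [\theta] \prec [\varphi]$, then $\theta(n)\da$ and $\psi(n) \prec_{\mc{L}_n} \theta(n) \prec_{\mc{L}_n} \varphi(n)$ for almost all $n \in C$, contradicting that the interval $(\psi(n), \varphi(n))_{\mc{L}_n}$ is empty for almost all $n \in C$. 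For $(1) \Rightarrow (2)$ I would contrapose: if $(2)$ fails, then by the dichotomy (and since $(1)$ forces $[\psi] \prec [\varphi]$, hence $\psi(n) \prec_{\mc{L}_n} \varphi(n)$ for almost all $n \in C$) the interval $(\psi(n), \varphi(n))_{\mc{L}_n}$ is nonempty for almost all $n \in C$. Define $\theta(n)$ by computing $\psi(n)$ and $\varphi(n)$ and then searching for the least $z \in |\mc{L}_n|$ (in the natural number order) with $\psi(n) \prec_{\mc{L}_n} z \prec_{\mc{L}_n} \varphi(n)$, leaving $\theta(n)$ undefined if no such $z$ is found. Then $\theta$ is partial computable with $\theta(n) \in |\mc{L}_n|$ whenever $\theta(n)\da$ and $C \subseteq^* \dom(\theta)$, so $[\theta] \in |\prod_C \mc{L}_n|$ and $[\psi] \prec [\theta] \prec [\varphi]$, contradicting $(1)$; hence $(2)$ holds.

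The one genuinely delicate point is the one flagged at the start: because $\psi$ and $\varphi$ are only partial computable, ``$\varphi(n)$ is the immediate successor of $\psi(n)$'' is not co-c.e.\ as a subset of all of $\Nb$, and one must cut down to the c.e.\ set $D$ where both halt before invoking cohesiveness. Everything after that---verifying that the search-defined $\theta$ really names an element of $\prod_C \mc{L}_n$, and the two ``no intermediate element'' arguments---is routine unwinding of Definition~\ref{def-CohProd}. This c.e.-set-plus-cohesiveness template is exactly what one reuses to transfer other order-theoretic features from the $\mc{L}_n$ to $\prod_C \mc{L}_n$, such as the finite-condensation structure needed later in the paper.
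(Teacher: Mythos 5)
Your proof is correct. The three ingredients are exactly the right ones: (i) cutting down to the c.e.\ set $D = \dom(\psi)\cap\dom(\varphi)$ so that the ``not an immediate successor'' condition becomes c.e., (ii) applying cohesiveness of $C$ to that c.e.\ set to obtain the dichotomy that either (2) holds or the immediate-successor relation fails for almost all $n\in C$, which gives $(2)\Leftrightarrow(3)$ immediately, and (iii) for $(1)\Rightarrow(2)$, building a witness $[\theta]$ by an unbounded search for an intermediate element whenever the interval $(\psi(n),\varphi(n))_{\mc{L}_n}$ is nonempty. The detail you flag --- that $\psi$ and $\varphi$ are only partial computable, so one must first restrict to $D$ and use $C\subseteq^* D$ before applying cohesiveness --- is indeed the only place where something could quietly go wrong, and you handle it correctly. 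Since the statement is quoted from the cited paper I cannot compare line by line, but your argument is the standard one for this kind of transfer lemma and I see no gap; the same template reappears implicitly in the proof of Proposition~\ref{prop-SuccNonstdPi2}, where the immediate-successor pair in the power is witnessed in exactly this way.
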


\begin{Lemma}[\cite{CohPowJournal}*{Lemma~3.8}]\label{lem-DifferentBlocks}
Let $(\mc{L}_n : n \in \Nb)$ be a uniformly computable sequence of linear orders, let $C$ be a cohesive set, and let $[\psi]$ and $[\varphi]$ be elements of $\prod_C \mc{L}_n$.  Then the following are equivalent.
\begin{enumerate}[(1)]
\item\label{it-FarBelow} $[\psi] \pprec_{\prod_C \mc{L}_n} [\varphi]$.

\smallskip

\item\label{it-BelowLim} $\lim_{n \in C}|(\psi(n), \varphi(n))_{\mc{L}_n}| = \infty$.

\smallskip

\item\label{it-BelowLimSup} $\limsup_{n \in C}|(\psi(n), \varphi(n))_{\mc{L}_n}| = \infty$.
\end{enumerate}
\end{Lemma}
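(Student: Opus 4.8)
The plan is to route every implication through item~\ref{it-BelowLim}, since the ``$\lim$'' statement is the most workable of the three. Concretely, I would prove the cycle $\ref{it-FarBelow} \Imp \ref{it-BelowLim} \Imp \ref{it-BelowLimSup} \Imp \ref{it-BelowLim} \Imp \ref{it-FarBelow}$, in which $\ref{it-BelowLim} \Imp \ref{it-BelowLimSup}$ is immediate from the meanings of $\lim$ and $\limsup$. Thus the two substantive tasks are the step $\ref{it-BelowLimSup} \Imp \ref{it-BelowLim}$, where cohesiveness of $C$ enters, together with the equivalence $\ref{it-FarBelow} \Biimp \ref{it-BelowLim}$.

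For $\ref{it-BelowLimSup} \Imp \ref{it-BelowLim}$, I would fix $k$ and consider
\begin{align*}
S_k = \bigl\{ n : \psi(n)\da \;\land\; \varphi(n)\da \;\land\; |(\psi(n), \varphi(n))_{\mc{L}_n}| \geq k \bigr\}.
\end{align*}
Since $(\mc{L}_n : n \in \Nb)$ is uniformly computable and $\psi, \varphi$ are partial computable, $S_k$ is c.e.\ uniformly in $k$: on input $n$, wait for $\psi(n)$ and $\varphi(n)$ to converge, then search the computable set $|\mc{L}_n|$ for $k$ distinct elements strictly $\prec_{\mc{L}_n}$-between $\psi(n)$ and $\varphi(n)$, and enumerate $n$ into $S_k$ once $k$ of them are found. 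As $C$ is cohesive, either $C \subseteq^* S_k$ or $C \subseteq^* \ol{S_k}$; item~\ref{it-BelowLimSup} says exactly that $C \cap S_k$ is infinite for every $k$, which excludes the second alternative. Hence $C \subseteq^* S_k$ for every $k$, which is precisely item~\ref{it-BelowLim}.

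For $\ref{it-FarBelow} \Imp \ref{it-BelowLim}$, I would fix $k$, pick $k$ distinct elements $[\theta_1], \dots, [\theta_k]$ of the interval $([\psi], [\varphi])_{\prod_C \mc{L}_n}$ (choosing representatives with $C \subseteq^* \dom(\theta_j)$ and $\theta_j(n) \in |\mc{L}_n|$ whenever defined), and relabel so that $[\psi] \prec_{\prod_C \mc{L}_n} [\theta_1] \prec \cdots \prec [\theta_k] \prec [\varphi]$. By the definition of $\prec_{\prod_C \mc{L}_n}$ (Definition~\ref{def-CohProd}), $C$ is contained, up to a finite set, in each of the finitely many sets $\{n : \psi(n) \prec_{\mc{L}_n} \theta_1(n)\}$, $\{n : \theta_j(n) \prec_{\mc{L}_n} \theta_{j+1}(n)\}$ for $1 \leq j < k$, and $\{n : \theta_k(n) \prec_{\mc{L}_n} \varphi(n)\}$; on their intersection, which again contains $C$ up to a finite set, one has $|(\psi(n), \varphi(n))_{\mc{L}_n}| \geq k$, so as $k$ is arbitrary this gives item~\ref{it-BelowLim}. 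For the reverse $\ref{it-BelowLim} \Imp \ref{it-FarBelow}$, I would, given $k$, define partial computable functions $\theta_0, \dots, \theta_{k-1}$ by: on input $n$, wait for $\psi(n)$ and $\varphi(n)$ to converge, search a fixed computable enumeration of $k$-tuples from $|\mc{L}_n|$ for the first $(x_0, \dots, x_{k-1})$ with $\psi(n) \prec_{\mc{L}_n} x_0 \prec_{\mc{L}_n} \cdots \prec_{\mc{L}_n} x_{k-1} \prec_{\mc{L}_n} \varphi(n)$, and then set $\theta_j(n) = x_j$. Item~\ref{it-BelowLim} guarantees that this search halts for all but finitely many $n \in C$, so $C \subseteq^* \dom(\theta_j)$ and $[\theta_j] \in |\prod_C \mc{L}_n|$; moreover $[\psi] \prec [\theta_0] \prec \cdots \prec [\theta_{k-1}] \prec [\varphi]$ in $\prod_C \mc{L}_n$, so the interval $([\psi], [\varphi])_{\prod_C \mc{L}_n}$ has at least $k$ elements. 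Since $k$ is arbitrary, that interval is infinite, i.e.\ $[\psi] \pprec_{\prod_C \mc{L}_n} [\varphi]$.

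I do not expect a genuine obstacle here; the argument closely parallels the proof of Lemma~\ref{lem-ImmedSucc}. The one delicate point is the convergence bookkeeping: because $[\psi]$ and $[\varphi]$ live in a cohesive product, $\psi$ and $\varphi$ are only guaranteed to be total on almost all of $C$, so $S_k$ and the domains of the $\theta_j$ must be described with the $\psi(n)\da \land \varphi(n)\da$ clauses made explicit, and one must check that the various ``almost every $n \in C$'' facts survive the finitely many intersections taken along the way.
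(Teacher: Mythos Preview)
Your argument is correct. Note, however, that the present paper does not actually prove Lemma~\ref{lem-DifferentBlocks}; it merely quotes the statement from~\cite{CohPowJournal}*{Lemma~3.8}, so there is no in-paper proof to compare against. Your cycle $\ref{it-FarBelow} \Rightarrow \ref{it-BelowLim} \Rightarrow \ref{it-BelowLimSup} \Rightarrow \ref{it-BelowLim} \Rightarrow \ref{it-FarBelow}$ is the standard route (and is essentially what the cited source does): the key uses of cohesiveness and uniform computability occur exactly where you place them, namely in observing that each $S_k$ is c.e.\ so that $C \cap S_k$ infinite forces $C \subseteq^* S_k$, and in ensuring that the coordinatewise search for a $k$-chain defining $\theta_0, \dots, \theta_{k-1}$ halts on almost every $n \in C$. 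The convergence bookkeeping you flag is handled correctly by the explicit $\psi(n){\downarrow} \land \varphi(n){\downarrow}$ clauses together with $C \subseteq^* \dom(\psi) \cap \dom(\varphi)$.
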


\begin{Theorem}[Extending~\cite{CohPowJournal}*{Theorem~3.9}]\label{thm-DenseBlocks}
Let $(\mc{L}_n : n \in \Nb)$ be a uniformly computable sequence of linear orders, and let $C$ be a $\Delta_2$ cohesive set.  Then $\condF(\prod_C \mc{L}_n)$ is dense.
\end{Theorem}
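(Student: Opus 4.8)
The plan is to show directly that the finite condensation $\condF(\prod_C \mc{L}_n)$ has no endpoints and is dense as a linear order; density of $\condF(\prod_C \mc{L}_n)$ is equivalent to the statement that whenever $[\psi] \pprec_{\prod_C \mc{L}_n} [\varphi]$ there is $[\chi]$ with $[\psi] \pprec_{\prod_C \mc{L}_n} [\chi] \pprec_{\prod_C \mc{L}_n} [\varphi]$, since by the remarks preceding the theorem $\condF(x) \prec_{\condF(\mc{L})} \condF(y)$ exactly when $x \pprec_\mc{L} y$, and there are two blocks strictly between $\condF([\psi])$ and $\condF([\varphi])$ iff such a $[\chi]$ exists. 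By Lemma~\ref{lem-DifferentBlocks}, $[\psi] \pprec_{\prod_C \mc{L}_n} [\varphi]$ is equivalent to $\lim_{n \in C}|(\psi(n), \varphi(n))_{\mc{L}_n}| = \infty$, and the $[\chi]$ we seek must satisfy both $\lim_{n \in C}|(\psi(n), \chi(n))_{\mc{L}_n}| = \infty$ and $\lim_{n \in C}|(\chi(n), \varphi(n))_{\mc{L}_n}| = \infty$. The natural candidate is a function $\chi$ that, on each $n \in C$ with $(\psi(n),\varphi(n))_{\mc{L}_n}$ large, picks an element roughly in the ``middle'' of that interval, so that both halves grow without bound.

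The difficulty — and the reason the hypothesis is $\Delta_2$ rather than merely computable — is that membership in $C$ is not decidable, so we cannot compute $\chi(n)$ by a uniform procedure that examines only $n$. Here is where I would use the $\Delta_2$-approximation. Fix a $\Delta_2$-approximation $f \colon \Nb^2 \to \{0,1\}$ to $C$ (which exists by the limit lemma). I would define $\chi$ as a partial computable function by a stage-by-stage construction: at stage $s$, look for the least $n < s$ with $\chi(n)$ still undefined, $f(n,s) = 1$, $\psi(n)\da$, $\varphi(n)\da$, and such that the interval $(\psi(n),\varphi(n))_{\mc{L}_n}$ has size at least some threshold depending on $n$ (say $\geq 2n$, using that the interval in a computable linear order can be effectively enumerated and its size effectively lower-bounded up to a budget of $s$ steps); then set $\chi(n)$ to an element $c \in (\psi(n),\varphi(n))_{\mc{L}_n}$ chosen so that both $(\psi(n),c)_{\mc{L}_n}$ and $(c,\varphi(n))_{\mc{L}_n}$ have size at least $n$. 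One must also allow $\chi$ to be ``reset'' on $n$ when $f(n,s)$ later drops back to $0$, exactly as in the cover/uncover bookkeeping of Lemma~\ref{lem-SatDelta2}, so that the true value of $C$ is eventually respected. For $n \notin C$ the behavior of $\chi(n)$ is irrelevant; for $n \in C$, once $f(\cdot,s)$ has settled on all numbers $\leq n$, the construction will (because only finitely many smaller $n'$ can still be competing and each is dealt with) eventually define $\chi(n)$ on almost every $n \in C$ for which $|(\psi(n),\varphi(n))_{\mc{L}_n}|$ is large enough, and since $\lim_{n\in C}|(\psi(n),\varphi(n))_{\mc{L}_n}| = \infty$ this is almost every $n \in C$. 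Hence $C \subseteq^* \dom(\chi)$ and $[\chi] \in |\prod_C \mc{L}_n|$.

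Granting that, the verification is routine: for almost every $n \in C$ we have $\psi(n) \prec_{\mc{L}_n} \chi(n) \prec_{\mc{L}_n} \varphi(n)$ with $|(\psi(n),\chi(n))_{\mc{L}_n}| \geq n$ and $|(\chi(n),\varphi(n))_{\mc{L}_n}| \geq n$, so $\lim_{n\in C}|(\psi(n),\chi(n))_{\mc{L}_n}| = \lim_{n\in C}|(\chi(n),\varphi(n))_{\mc{L}_n}| = \infty$, and Lemma~\ref{lem-DifferentBlocks} gives $[\psi] \pprec_{\prod_C \mc{L}_n} [\chi] \pprec_{\prod_C \mc{L}_n} [\varphi]$, as required for density. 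For the no-endpoints condition one argues similarly: given $[\psi]$, build a computable $\chi$ that on almost every $n \in C$ outputs an element of $\mc{L}_n$ that is ``very far'' (more than $n$ elements, when available) below $\psi(n)$, respectively above $\psi(n)$; this works provided each $\mc{L}_n$ is infinite, which we may assume (if infinitely many $\mc{L}_n$ were finite of bounded size the cohesive product would be finite and the statement is trivial; if their sizes are unbounded along $C$ the same construction applies, padding as needed). The main obstacle is thus purely the combinatorial bookkeeping needed to make the $\Delta_2$ approximation cooperate — ensuring $\chi$ is total-enough on $C$ despite the approximation's oscillations — and this is handled by the now-standard cover/uncover technique already used in the proof of Lemma~\ref{lem-SatDelta2} in this paper.
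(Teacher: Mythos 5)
Your high-level strategy is in the right family — using a $\Delta_2$-approximation to $C$ to partially compute a witness $\chi$, guarded by the cover/uncover bookkeeping — but there is a concrete error that makes the construction fail, and there is also a claim about endpoints that is both unnecessary and false in general.

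The fatal step is the threshold. You define $\chi(n)$ only when the interval $(\psi(n),\varphi(n))_{\mc{L}_n}$ has size at least a threshold depending on $n$ (you suggest $\geq 2n$), and then you assert that since $\lim_{n\in C}|(\psi(n),\varphi(n))_{\mc{L}_n}| = \infty$, this holds for almost every $n \in C$. That inference is incorrect. The limit being $\infty$ is a statement about fixed thresholds $K$, not about a threshold growing with the input. For instance, enumerate $C = \{n_1 < n_2 < \cdots\}$ with $n_i$ extremely sparse (say $n_i = 2^i$) and suppose $|(\psi(n_i),\varphi(n_i))_{\mc{L}_{n_i}}| = i$: then the limit along $C$ is $\infty$, yet the interval size is always far below $2n_i$. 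In that scenario your $\chi$ would be defined on at most finitely many $n \in C$, so $C \not\subseteq^* \dom(\chi)$ and $[\chi]$ is not an element of the cohesive product at all. The later claim ``$|(\psi(n),\chi(n))_{\mc{L}_n}| \geq n$ for almost every $n \in C$'' cannot be salvaged. The correct device — used in the paper in the proof of Lemma~\ref{lem-SatDelta2} — is to decouple the growth parameter from the input index: maintain a separate counter $k$ (``the part of the type covered so far'') and, for each $k$, look for \emph{some} $n > k$ that appears to be in $C$ where the interval has at least $2k$ elements. The counter $k$ advances only when such an $n$ has been found and survives; it is never tied to $n$ itself. This yields $\limsup_{n\in C}|(\psi(n),\chi(n))_{\mc{L}_n}| = \infty$, which suffices by the $\limsup$ clause of Lemma~\ref{lem-DifferentBlocks}.

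The paper routes exactly this construction through a reusable abstraction: Theorem~\ref{thm-SatDelta2} says $\prod_C \mc{L}_n$ is $\Sigma_1$-recursively saturated whenever $C$ is $\Delta_2$, and the actual proof of the present theorem is then a one-line application — realize the computable $\Sigma_1$-type $p(x;a,b) = \{\Phi_k(x;a,b) : k \in \Nb\}$, where $\Phi_k(x;a,b)$ asserts there are at least $k$ elements in each of $(a,x)$ and $(x,b)$. You are essentially re-deriving Lemma~\ref{lem-SatDelta2} inline, so invoking the saturation theorem is both shorter and safer.

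Finally, the theorem only asserts that $\condF(\prod_C \mc{L}_n)$ is \emph{dense}, not that it has no endpoints, and your no-endpoints claim is false in this generality: if each $\mc{L}_n$ is a computable copy of $\omega$, Theorem~\ref{thm-CoCeDenseCond} shows $\condF(\prod_C \mc{L}_n) \iso \bm{1} + \eta$, which has a least element. Your hedge ``when available'' is precisely where the argument for no endpoints breaks down, so that part should simply be dropped.
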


\begin{proof}
The cohesive product $\prod_C \mc{L}_n$ is $\Sigma_1$-recursively saturated by Theorem~\ref{thm-SatDelta2}, so it suffices to show that the finite condensation of a $\Sigma_1$-recursively saturated linear order is dense.

Let $\mc{M} = (M; \prec_\mc{M})$ be a $\Sigma_1$-recursively saturated linear order, and let $a, b \in M$ be such that $a \pprec_\mc{M} b$.  We need to find a $c \in M$ with $a \pprec_\mc{M} c \pprec_\mc{M} b$.  For each $k \in \Nb$, let $\Phi_k(x; a, b)$ be the following formula (with parameters $a$ and $b$) expressing that the intervals $(a, x)_\mc{M}$ and $(x, b)_\mc{M}$ have at least $k$ elements each:
\begin{align*}
\Phi_k(x;a,b) \quad\equiv\quad \exists w_0, \dots, w_{k-1}, z_0, \dots, z_{k-1} \, (a \prec w_0 \prec \cdots \prec w_{k-1} \prec x \prec z_0 \prec \cdots \prec z_{k-1} \prec b).
\end{align*}
The set $p(x;a,b) = \{\Phi_k(x; a, b) : k \in \Nb\}$ is a computable set of $\Sigma_1$ formulas, and it is a type over $\{a,b\}$ because the interval $(a,b)_\mc{M}$ is infinite.  Thus $p(x;a,b)$ is realized by some $c \in M$ by $\Sigma_1$-recursive saturation.  The intervals $(a, c)_\mc{M}$ and $(c, b)_\mc{M}$ are therefore both infinite, so $a \pprec_\mc{M} c \pprec_\mc{M} b$ as desired.
\end{proof}

\begin{Lemma}[\cite{CohPowJournal}*{Lemma~4.1}]\label{lem-StdInitSeg}
Let $\mc{L}$ be a computable copy of $\omega$, and let $C$ be a cohesive set.  Then the image of the canonical embedding of $\mc{L}$ into $\prod_C \mc{L}$ is an initial segment of $\prod_C \mc{L}$ of order-type $\omega$.
\end{Lemma}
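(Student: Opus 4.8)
The plan is to analyze the image $I = \{[f_a] : a \in |\mc{L}|\}$ of the canonical embedding $a \mapsto [f_a]$ and establish two things: (i) $I$ is order-isomorphic to $\mc{L}$, hence of order-type $\omega$, and (ii) $I$ is an initial segment of $\prod_C\mc{L}$. Part (i) is essentially free: as recorded in the discussion following Definition~\ref{def-CohProd}, the canonical embedding is an embedding of $\mc{L}$ into $\prod_C\mc{L}$, and any map between linear orders that both preserves and reflects $\prec$ is an isomorphism onto its image; thus $I \iso \mc{L}$, so $I$ has order-type $\omega$. This part uses nothing about $C$ beyond its being infinite.

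For part (ii), since every element of $I$ has the form $[f_a]$, it suffices to show that whenever $[\varphi] \in \prod_C\mc{L}$ with $[\varphi] \prec_{\prod_C\mc{L}} [f_a]$ for some $a \in |\mc{L}|$, then $[\varphi] \in I$. Unwinding the interpretation of $\prec$ in Definition~\ref{def-CohProd}, the hypothesis says $C \subseteq^* \{n : \varphi(n)\da \,\land\, \varphi(n) \prec_\mc{L} a\}$. Because $\mc{L}$ has order-type $\omega$, the set $P = \{x \in |\mc{L}| : x \prec_\mc{L} a\}$ is finite, so that right-hand set is the finite disjoint union $\bigcup_{b \in P}\{n : \varphi(n)\da \,\land\, \varphi(n) = b\}$, each piece being c.e.\ since $\varphi$ is partial computable and $|\mc{L}|$ is computable. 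A cohesive set that is almost contained in a finite union of c.e.\ sets must be almost contained in one of them: for each $b \in P$, cohesiveness gives $C \subseteq^* \{n : \varphi(n) = b\}$ or $C \subseteq^* \ol{\{n : \varphi(n) = b\}}$, and the latter cannot hold for every $b \in P$, for then $C$ would be almost disjoint from the union and hence finite. Fixing $b \in P$ with $C \subseteq^* \{n : \varphi(n) = b\}$ gives $\varphi =_C f_b$, i.e.\ $[\varphi] = [f_b] \in I$. Hence $I$ is downward closed, i.e.\ an initial segment of $\prod_C\mc{L}$.

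There is no real obstacle here; the lemma functions as a warm-up for the harder structural results about $\prod_C\mc{L}$. The only points requiring any care are the bookkeeping in passing from $[\varphi] \prec_{\prod_C\mc{L}} [f_a]$ to the finite c.e.\ partition $\bigl(\{n : \varphi(n) = b\}\bigr)_{b \in P}$, and the standard fact that cohesiveness for c.e.\ sets is preserved under finite unions. The hypothesis that $\mc{L}$ is a copy of $\omega$ enters exactly once, in guaranteeing that $P$ is finite so that this finite case analysis is legitimate.
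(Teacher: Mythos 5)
Your proof is correct, and since the paper cites this result from~\cite{CohPowJournal} without reproducing a proof, there is no in-paper argument to compare against. Your argument is the natural one and almost certainly matches the cited source: the canonical embedding is an isomorphism onto its image, which gives order-type $\omega$; and downward closure follows because the predecessors of $a$ in a copy of $\omega$ form a finite set $P$, so $\{n : \varphi(n)\da \prec_\mc{L} a\}$ decomposes as a finite union of c.e.\ sets $\{n : \varphi(n)\da = b\}$ over $b \in P$, and cohesiveness forces $C \subseteq^*$ one of them, yielding $[\varphi] = [f_b]$. The degenerate case where $a$ is $\prec_\mc{L}$-minimal is handled implicitly (then $P = \emptyset$ and no $[\varphi]$ can lie strictly below $[f_a]$, which is consistent). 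No gaps.
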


Let $\mc{L} = (L; \prec_\mc{L})$ be a computable copy of $\omega$, and let $C$ be a cohesive set.  It is straightforward to check that if $\varphi \colon \Nb \to L$ is a total computable injection, then $[\varphi]$ is not in the image of the canonical embedding.  Therefore the cohesive power $\prod_C \mc{L}$ has the form $\omega + \mc{M}$ for some non-empty linear order $\mc{M}$.  Call an element of $\prod_C \mc{L}$ \emph{standard} if it is in the image of the canonical embedding, and call the element \emph{non-standard} otherwise.

\begin{Lemma}[\cite{CohPowJournal}*{Lemma~4.2}]\label{lem-NonstdUnbdd}
Let $\mc{L}$ be a computable copy of $\omega$, let $C$ be a cohesive set, and let $[\varphi]$ be an element of $\prod_C \mc{L}$.  Then $[\varphi]$ is non-standard if and only if $\liminf_{n \in C} \varphi(n) = \infty$.
\end{Lemma}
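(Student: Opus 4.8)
The plan is to prove both directions of the biconditional by characterizing standardness in terms of the canonical embedding described just before the statement. Recall that for each $a \in L = |\mc{L}|$, the canonical embedding sends $a$ to $[f_a]$, where $f_a$ is the total computable function with constant value $a$. So $[\varphi]$ is standard exactly when $[\varphi] = [f_a]$ for some $a \in L$, i.e., exactly when $C \subseteq^* \{n : \varphi(n)\da = a\}$ for some $a \in L$.

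For the first direction, I would argue the contrapositive: suppose $\liminf_{n \in C}\varphi(n) \neq \infty$, so there is some bound $b \in \Nb$ with $\varphi(n) \leq b$ for infinitely many $n \in C$. Since $L$ is a computable copy of $\omega$, each of the finitely many values in $\{0,1,\dots,b\} \cap L$ determines, for each such value $a$, the set $\{n : \varphi(n)\da = a\}$; these sets together with $\{n : \varphi(n)\ua \lor \varphi(n) > b\}$ partition $\Nb$ into finitely many pieces, and $C$ meets $\bigcup_{a \leq b}\{n : \varphi(n) = a\}$ infinitely often. Because $C$ is cohesive and each piece $\{n : \varphi(n)\da = a\}$ is c.e.\ (indeed the graph of $\varphi$ restricted to value $a$ is c.e.), $C \subseteq^* \{n : \varphi(n)\da = a\}$ for exactly one such $a$. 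Hence $[\varphi] = [f_a]$ is standard.

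For the converse, suppose $[\varphi]$ is standard, so $[\varphi] = [f_a]$ for some $a \in L$, meaning $C \subseteq^* \{n : \varphi(n)\da = a\}$. Then $\varphi(n) = a$ for almost every $n \in C$, so $\liminf_{n \in C}\varphi(n) = a \neq \infty$. This completes the contrapositive of the statement "$[\varphi]$ non-standard $\Imp \liminf = \infty$" paired with the direct proof of the reverse implication.

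The only mildly delicate point is the cohesiveness step: one must check that the relevant sets $\{n : \varphi(n)\da = a\}$ are c.e., so that cohesiveness of $C$ applies and yields $C \subseteq^* \{n : \varphi(n)\da = a\}$ or $C \subseteq^* \{n : \varphi(n)\neq a \lor \varphi(n)\ua\}$ for each of the finitely many relevant $a$; combining these finitely many dichotomies with the fact that $C$ is infinite and lies almost entirely inside the union forces $C \subseteq^* \{n : \varphi(n)\da = a\}$ for a single $a$. Since $\varphi \in D$ guarantees $C \subseteq^* \dom(\varphi)$, there are no issues with partiality beyond a finite set. I expect this to be the main (though still routine) obstacle; everything else is a direct unwinding of the definitions of the canonical embedding and of $=_C$.
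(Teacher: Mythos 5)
Your proof is correct and uses the natural argument one would expect for this lemma: unwind the definition of standard in terms of the canonical embedding, apply cohesiveness to the finitely many c.e.\ sets $\{n : \varphi(n)\da = a\}$ with $a \leq b$ in the bounded case to extract a single constant value on a cofinite part of $C$, and check the converse directly. The only wrinkle is that your closing sentence mislabels which direction is being proved by contrapositive, but the two implications you actually establish ($\liminf \neq \infty \Rightarrow$ standard, and standard $\Rightarrow \liminf \neq \infty$) are exactly the two contrapositives needed, so the logic is complete.
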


In Lemma~\ref{lem-NonstdUnbdd}, the condition $\liminf_{n \in C} \varphi(n) = \infty$ may be replaced by $\lim_{n \in C} \varphi(n) = \infty$ and by $\limsup_{n \in C} \varphi(n) = \infty$.

\begin{Lemma}[\cite{CohPowJournal}*{Lemma~4.3}]\label{lem-NonStdNoEnd}
Let $\mc{L}$ be a computable copy of $\omega$, let $C$ be a cohesive set, and let $[\varphi]$ be a non-standard element of $\prod_C \mc{L}$.  Then there are non-standard elements $[\psi^-]$ and $[\psi^+]$ of $\prod_C \mc{L}$ with $[\psi^-] \pprec_{\prod_C \mc{L}} [\varphi] \pprec_{\prod_C \mc{L}} [\psi^+]$.
\end{Lemma}

\begin{Theorem}[Extending~\cite{CohPowJournal}*{Theorem~4.4}]\label{thm-CoCeDenseCond}
Let $\mc{L}$ be a computable copy of $\omega$, and let $C$ be a $\Delta_2$ cohesive set.  Then $\condF(\prod_C \mc{L})$ has order-type $\bm{1} + \eta$.
\end{Theorem}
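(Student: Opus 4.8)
The plan is to use the standard fact (already recorded in the excerpt: every countable dense linear order without endpoints has order-type $\eta$) that a countable linear order $\mc{N}$ has order-type $\eta$ if and only if it is dense and has no endpoints, so that any linear order of the form $\bm{1}+\mc{N}$ with $\mc{N}\cong\eta$ has order-type $\bm{1}+\eta$. First I would invoke Lemma~\ref{lem-StdInitSeg} to write $\prod_C\mc{L}\cong\omega+\mc{N}$, where $\mc{N}$ is the non-empty linear order consisting of the non-standard elements. The two things to establish are then: (i) $\condF(\prod_C\mc{L})\cong\bm{1}+\condF(\mc{N})$, and (ii) $\condF(\mc{N})\cong\eta$.

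For (i) I would check that the finite condensation respects the decomposition $\omega+\mc{N}$. Any two standard elements have only finitely many elements between them, since the intervening elements are again standard and the standard part has order-type $\omega$; hence all standard elements lie in a single $\condF$-block. Conversely, if $[\varphi]$ is non-standard then it lies above every standard element by Lemma~\ref{lem-StdInitSeg}, so for $a$ the $\prec_\mc{L}$-least element of $\mc{L}$ the interval between $[f_a]$ and $[\varphi]$ contains every $[f_b]$ with $a\prec_\mc{L}b$ and is therefore infinite; thus $[\varphi]\notin\condF([f_a])$. So $\condF([f_a])$ is exactly the set of standard elements, it is the $\prec$-least block, and $\condF(\prod_C\mc{L})\cong\bm{1}+\condF(\mc{N})$.

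For (ii), $\condF(\mc{N})$ is countable because there are only countably many partial computable functions. It is dense: $\condF(\prod_C\mc{L})$ is dense by Theorem~\ref{thm-DenseBlocks}, and a final segment of a dense linear order is dense (this is the one point where the hypothesis that $C$ is $\Delta_2$ rather than merely cohesive is genuinely used, namely through the $\Sigma_1$-recursive saturation of Theorem~\ref{thm-SatDelta2} that feeds Theorem~\ref{thm-DenseBlocks}). Finally $\condF(\mc{N})$ has no endpoints: given any non-standard $[\varphi]$, Lemma~\ref{lem-NonStdNoEnd} produces non-standard $[\psi^-]$ and $[\psi^+]$ with $[\psi^-]\pprec_{\prod_C\mc{L}}[\varphi]\pprec_{\prod_C\mc{L}}[\psi^+]$, and by the remark following Definition~\ref{def-FinCond} this gives $\condF([\psi^-])\prec\condF([\varphi])\prec\condF([\psi^+])$ inside $\condF(\mc{N})$. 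Hence $\condF(\mc{N})\cong\eta$, so $\condF(\prod_C\mc{L})\cong\bm{1}+\eta$. I do not expect a serious obstacle: the substantive input is the density of the finite condensation over a $\Delta_2$ cohesive set (Theorem~\ref{thm-DenseBlocks}, resting on Theorem~\ref{thm-SatDelta2}), while Lemmas~\ref{lem-StdInitSeg}--\ref{lem-NonStdNoEnd} handle the standard/non-standard dichotomy for an arbitrary cohesive set; the only mildly delicate step is the bookkeeping identifying the $\prec$-least $\condF$-block with exactly the standard part of $\prod_C\mc{L}$.
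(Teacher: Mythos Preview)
Your proposal is correct and follows essentially the same approach as the paper's proof: the paper also invokes Lemma~\ref{lem-StdInitSeg} to identify the standard elements as a single initial block, then applies Theorem~\ref{thm-DenseBlocks} for density and Lemma~\ref{lem-NonStdNoEnd} for no endpoints on the non-standard blocks to conclude $\condF(\prod_C\mc{L})\cong\bm{1}+\eta$. Your write-up simply makes explicit the bookkeeping (that the standard block is exactly the least block and that density passes to the final segment) that the paper leaves implicit.
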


\begin{proof}
The standard elements of $\prod_C \mc{L}$ form a single initial block by Lemma~\ref{lem-StdInitSeg}.  The blocks of $\prod_C \mc{L}$ containing the non-standard elements form a countable linear order that is dense by Theorem~\ref{thm-DenseBlocks} and has no endpoints by Lemma~\ref{lem-NonStdNoEnd}.  Thus $\condF(\prod_C \mc{L}) \iso \bm{1} + \eta$.
\end{proof}

The main result of~\cite{CohPowJournal} is that for a variety of countable linear orders $\mc{M}$ with $\condF(\mc{M}) \iso \bm{1} + \eta$ and for a given $\Pi_1$ cohesive set $C$, it is possible to design a computable copy $\mc{L}$ of $\omega$ that achieves $\prod_C \mc{L} \iso \mc{M}$.

\begin{Theorem}[\cite{CohPowJournal}*{Theorem~6.10}]\label{thm-BCSigma2ShuffleOld}
Let $X \subseteq \Nb \setminus \{0\}$ be a Boolean combination of $\Sigma_2$ sets, thought of as a set of finite order-types.  Let $C$ be a $\Pi_1$ cohesive set.  Then there is a computable copy $\mc{L}$ of $\omega$ where the cohesive power $\prod_C \mc{L}$ has order-type $\omega + \bm{\sigma}(X \cup \{\omega + \zeta\eta + \omega^*\})$.  Moreover, if $X$ is finite and non-empty, then there is also a computable copy $\mc{L}$ of $\omega$ where the cohesive power $\prod_C \mc{L}$ has order-type $\omega + \bm{\sigma}(X)$.
\end{Theorem}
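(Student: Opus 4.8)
The plan is to recast the conclusion as a statement about the block structure of the cohesive power and then to build $\mc{L}$ by a stagewise construction meeting a list of requirements, driven by the c.e.\ enumeration of $\ol{C}$; note that a $\Pi_1$ cohesive set is $\Delta_2$, so every preservation and saturation result stated above applies to $\prod_C\mc{L}$. By Lemma~\ref{lem-StdInitSeg} one may write $\prod_C\mc{L}=\omega+\mc{M}$, with the leading $\omega$ the canonical image of $\mc{L}$ and $\mc{M}$ the suborder of non-standard elements; and by Theorem~\ref{thm-CoCeDenseCond} we have $\condF(\prod_C\mc{L})\iso\bm{1}+\eta$, hence $\condF(\mc{M})\iso\eta$. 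So $\mc{M}$ is a countable order whose finite-condensation blocks---each of type $\omega$, $\omega^*$, $\zeta$, or $\bm{n}$---are densely ordered without endpoints, and a standard back-and-forth argument identifies $\mc{M}$ with $\bm{\sigma}(X\cup\{\omega+\zeta\eta+\omega^*\})$ (see Definition~\ref{def-Shuffle}) as soon as the construction guarantees: (i) for each $n\in X$ densely many non-standard blocks of type $\bm{n}$; (ii) densely many ``exceptional clusters'' of non-standard blocks realizing the compound type $\omega+\zeta\eta+\omega^*$; and (iii) every non-standard block lies in an exceptional cluster or has type $\bm{n}$ for some $n\in X$. In the finite-nonempty case one drops (ii) and the mention of exceptional clusters in (iii), obtaining $\bm{\sigma}(X)$. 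The density in (i)--(ii) will come almost for free from the $\Sigma_1$-recursive saturation of $\prod_C\mc{L}$ (Theorem~\ref{thm-SatDelta2}) and the argument behind Theorem~\ref{thm-DenseBlocks}, provided zones of each candidate type are laid out cofinally in $\mc{L}$; the real work is seeding the correct block-types and forbidding all others.

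Next I would build $\mc{L}$ on domain $\Nb$ as an increasing union of finite orders, each obtained from its predecessor by inserting finitely many fresh elements, subject to one global discipline: every insertion made after stage $s$ is placed at an $\mc{L}$-position above a threshold that tends to infinity. This alone forces every element to end up with finitely many $\prec_\mc{L}$-predecessors, so $\mc{L}\iso\omega$, while still allowing unboundedly many insertions for diagonalization. Two families of strategies then act. \emph{Block strategies} carve out a dense family of disjoint \emph{zones} of $\mc{L}$-positions; a zone with candidate size $n$ watches a guess for ``$n\in X$'' and keeps its current batch of elements capped at size $n$ while the guess holds. \emph{Successor strategies}, indexed by pairs $(\varphi,\psi)$ of partial computable functions, prevent unwanted immediate successors (and, symmetrically, predecessors): at a stage $s$ with $\varphi(n)\da$, $\psi(n)\da$, $n\notin\ol{C}_s$, with $\varphi(n)$ occupying a position past the current threshold and $\psi(n)$ equal to the $\prec_{\mc{L}_s}$-successor of $\varphi(n)$, the strategy inserts a fresh element strictly between $\varphi(n)$ and $\psi(n)$. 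Since $\ol{C}$ is c.e., acting on an apparent member of $C$ that is later enumerated out of $C$ costs only one harmless insertion at a large position; and if $[\varphi]$ is genuinely non-standard then $\varphi(n)\to\infty$ along $C$ (Lemma~\ref{lem-NonstdUnbdd}), so infinitely many such actions occur at true $n\in C$, producing disagreement at infinitely many members of $C$. By Lemma~\ref{lem-ImmedSucc} this destroys the immediate successor, and with Lemma~\ref{lem-DifferentBlocks} it controls which blocks are finite and of what size; together with the capped zones this secures (i) and the ``size lies in $X$'' half of (iii).

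Finally I would feed in the complexity of $X$. A Boolean combination of $\Sigma_2$ sets lies at a finite level of the difference hierarchy over $\Sigma_2$, i.e.\ is $k$-c.e.\ relative to $\emptyset'$ for a fixed $k$; relativizing the Ershov normal form to $\emptyset'$ and applying the limit lemma yields a computable $\{0,1\}$-valued $h(n,s,t)$ with $X(n)=\lim_s\lim_t h(n,s,t)$ such that, uniformly in $n$, the map $s\mapsto\lim_t h(n,s,t)$ changes value only boundedly often. A zone with candidate size $n$ uses $h(n,\cdot,\cdot)$ as its guess: the bounded-mind-change property makes it abandon size $n$ only finitely often, so it settles either as a genuine block of some size $m\in X$ or as abandoned debris. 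The abandoned debris is exactly what must be shown to assemble, in $\prod_C\mc{L}$, into exceptional clusters of type $\omega+\zeta\eta+\omega^*$---the initial $\omega$ and terminal $\omega^*$ from the batches frozen on either side before the guess was last revised, and the dense $\zeta\eta$ interior from the interplay of the repeated reopenings with the successor strategies and with the co-c.e.\ approximation to $C$---and that these clusters occur densely and exhaust every otherwise-unexplained block, giving (ii) and the rest of (iii). I expect this verification to be the main obstacle: one must simultaneously honour the conflicting demands that $\mc{L}$ remain a copy of $\omega$, that the successor strategies keep destroying finite extensions at the very positions where the zones are trying to cap blocks, and that abandoned zones produce precisely $\omega+\zeta\eta+\omega^*$ and nothing less tractable. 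When $X$ is finite and nonempty the predicate ``$n\in X$'' is decidable, so no zone is ever abandoned, no exceptional clusters arise, and the only non-standard blocks are the intended finite ones, which yields $\prod_C\mc{L}\iso\omega+\bm{\sigma}(X)$.
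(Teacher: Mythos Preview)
Your plan is plausible in outline but diverges substantially from the paper's route, and the part you flag as the main obstacle is exactly the part the paper's decomposition is designed to avoid.

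The paper does not prove Theorem~\ref{thm-BCSigma2ShuffleOld} directly; it is imported from~\cite{CohPowJournal} and only sketched here in one sentence at the end of Section~\ref{sec-CohPowIntro}. The argument is fully modular: first one constructs, for the given $\Pi_1$ cohesive $C$, a computable \emph{colored} copy $\mc{O}=(R,\Nb;\prec_\mc{R},F)$ of $\omega$ whose cohesive power $\prod_C\mc{O}$ is \emph{colorful} in the sense of Definition~\ref{def-PowColorful} (this is~\cite{CohPowJournal}*{Theorem~5.3}); the underlying $\mc{R}$ then already has $\prod_C\mc{R}\iso\omega+\eta$. Second, one applies Lemma~\ref{lem-Shuffle} (i.e.~\cite{CohPowJournal}*{Lemmas~6.5 and~6.9}), which \emph{post-processes} $\mc{O}$ into the desired $\mc{L}$ by replacing each element of $\mc{R}$ by a finite linear order whose length depends only on that element's color and on $X$. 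The Boolean-of-$\Sigma_2$ information about $X$ is absorbed entirely inside this second step, and the $\omega+\zeta\eta+\omega^*$ clusters arise there automatically: points of striped (non-solid) color in $\prod_C\mc{O}$ get replaced by intervals whose sizes are unbounded along $C$, which by Lemma~\ref{lem-DifferentBlocks} and the basic product calculus forces exactly that compound block-type. No ``abandoned debris'' ever has to be analyzed by hand.

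By contrast, you attempt to build the correct block structure of $\prod_C\mc{L}$ in one pass, simultaneously running zone/cap strategies encoding membership in $X$ and successor-killing strategies against all pairs $(\varphi,\psi)$. Your use of the difference hierarchy over $\Sigma_2$ and a two-limit computable approximation $h(n,s,t)$ is correct in spirit and is essentially what is buried inside Lemma~\ref{lem-Shuffle}. But your monolithic construction leaves you with the task of showing that the residue of abandoned zones, interleaved with successor insertions, yields \emph{exactly} $\omega+\zeta\eta+\omega^*$ and nothing else---and you correctly identify this as unresolved. In the paper's decomposition this difficulty never arises: the coloring step only has to achieve density of each solid color (and one striped color) between any two non-standard points, which is a far more uniform target, and the block-type bookkeeping is handled cleanly and separately by the replacement step. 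If you want to carry your direct approach through, you would effectively be reproving both halves of the modular argument at once; the cleaner path is to first aim only for a colorful $\prod_C\mc{O}$ and then invoke Lemma~\ref{lem-Shuffle}.
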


The proof of Theorem~\ref{thm-BCSigma2ShuffleOld} involves a coloring apparatus whereby a computable copy of $\omega$ is colored in such a way as to induce a coloring on its cohesive power with certain density properties.  We end this section by introducing the coloring apparatus.

\begin{Definition}[\cite{CohPowJournal}*{Definition~5.1}]
A \emph{colored linear order} is a structure $\mc{O} = (L, \Nb; \prec_\mc{L}, F)$, where $\mc{L} = (L; \prec_\mc{L})$ is a linear order and $F$ is (the graph of) a function $F \colon L \to \Nb$, thought of as a coloring of $L$.  The colored linear order $\mc{O}$ is a \emph{colored copy of $\omega$} if $\mc{L} \iso \omega$.
\end{Definition}
A colored linear order is a two-sorted structure, where one sort is the domain $L$ of the linear order and the other sort is the set $\Nb$ of colors.  The language consists of unary relation symbols for $L$ and $\Nb$, a binary relation symbol for $F$, and the binary relation symbol $\prec$.  Though we must technically formalize $F$ as a relation, we use the function notation $F(\ell) = d$ instead of the relation notation $F(\ell, d)$.

We typically follow the notational convention of letting $\mc{O} = (L, \Nb; \prec_\mc{L}, F)$ denote a colored linear order with the additional coloring structure and letting $\mc{L} = (L; \prec_\mc{L})$ denote the underlying linear order.  So let $\mc{O}$ be a computable colored linear order, let $\mc{L}$ be the underlying linear order, and let $C$ be a cohesive set.  The cohesive power $\prod_C \mc{O}$ consists of a linear order $\prod_C \mc{L}$, a collection of colors $\prod_C \Nb$, and a (graph of a) function $F^{\prod_C \mc{O}} \colon |\prod_C \mc{L}| \to |\prod_C \Nb|$ thought of as a coloring of $\prod_C \mc{L}$.  If $[\varphi]$ is in the domain of $\prod_C \mc{O}$, then, by cohesiveness, either $\varphi(n)$ is an element of the linear order (i.e., is of the first sort) for almost every $n \in C$, or $\varphi(n)$ is a color (i.e., is of the second sort) for almost every $n \in C$.  Therefore $[\varphi]$ is either an element of the linear order $\prod_C \mc{L}$ or an element of the collection of colors $\prod_C \Nb$.  See also the discussion of reducts and substructures of cohesive powers in~\cite{CohPowJournal}*{Section~2}.  That $\prec_\mc{L}$ linearly orders $L$ is expressible by a $\Pi_1$ sentence, so $\prec_{\prod_C \mc{L}}$ indeed linearly orders $|\prod_C \mc{L}|$ by Theorem~\ref{thm-LosGeneral}.  That $F$ is the graph of a function with domain $L$ and codomain $\Nb$ is expressible by a $\Pi_2$ sentence, so $F^{\prod_C \mc{O}}$ is indeed the graph of a function with domain $|\prod_C \mc{L}|$ and codomain $|\prod_C \Nb|$ by Theorem~\ref{thm-LosGeneral} as well.

Again let $\mc{O} = (L, \Nb; \prec_\mc{L}, F)$ be a computable colored linear order, let $\mc{L} = (L; \prec_\mc{L})$, and let $C$ be a cohesive set.  To disambiguate between the elements of the linear order $\prod_C \mc{L}$ and the colors of $\prod_C \Nb$ inside the cohesive power $\prod_C \mc{O}$, we write $[\varphi]$ for the elements of $\prod_C \mc{L}$, and we write $\llb \delta \rrb$ for the elements of $\prod_C \Nb$.  Call a color $\llb \delta \rrb \in |\prod_C \Nb|$ a \emph{solid color} if $\delta$ is eventually constant on $C$:  $\exists d \in \Nb\; \forae n \in C\; (\delta(n) = d)$.  Otherwise, call $\llb \delta \rrb$ a \emph{striped color}.

We want to color a computable copy of $\omega$ so that in the cohesive power by a given cohesive set, between any two distinct non-standard elements there are elements of every solid color plus at least one element of a striped color.  In this situation, we say that the cohesive power is \emph{colorful}.

\begin{Definition}[\cite{CohPowJournal}*{Definition~5.2}]\label{def-PowColorful}
Let $\mc{O} = (L, \Nb; \prec_\mc{L}, F)$ be a computable colored copy of $\omega$, and let $\mc{L}$ denote $(L; \prec_\mc{L})$.  Let $C$ be a cohesive set.  The cohesive power $\prod_C \mc{O}$ is \emph{colorful} if the following items hold.
\begin{enumerate}[(1)]
\item\label{it-ColorfulStd} For every pair of non-standard elements $[\varphi], [\psi] \in |\prod_C \mc{L}|$ with $[\psi] \prec_{\prod_C \mc{L}} [\varphi]$ and every solid color $\llb \delta \rrb \in |\prod_C \Nb|$, there is a $[\theta] \in |\prod_C \mc{L}|$ with $[\psi] \prec_{\prod_C \mc{L}} [\theta] \prec_{\prod_C \mc{L}} [\varphi]$ and $F^{\prod_C \mc{O}}([\theta]) = \llb \delta \rrb$.

\medskip

\item\label{it-ColorfulNonstd} For every pair of non-standard elements $[\varphi], [\psi] \in |\prod_C \mc{L}|$ with $[\psi] \prec_{\prod_C \mc{L}} [\varphi]$, there is a $[\theta] \in |\prod_C \mc{L}|$ with $[\psi] \prec_{\prod_C \mc{L}} [\theta] \prec_{\prod_C \mc{L}} [\varphi]$ where $F^{\prod_C \mc{O}}([\theta])$ is a striped color.
\end{enumerate}
\end{Definition}

In Definition~\ref{def-PowColorful}, item~\ref{it-ColorfulStd} implies item~\ref{it-ColorfulNonstd} when the cohesive set is $\Delta_2$.  Let $\mc{O} = (L, \Nb; \prec_\mc{L}, F)$ be a computable colored copy of $\omega$, let $C$ be a $\Delta_2$ cohesive set, and suppose that $\prod_C \mc{O}$ satisfies Definition~\ref{def-PowColorful} item~\ref{it-ColorfulStd}.  Let $[\varphi]$ and $[\psi]$ be non-standard elements of $\prod_C \mc{L}$ with $[\psi] \prec_{\prod_C \mc{L}} [\varphi]$.  Let $\delta_i \colon \Nb \to \Nb$ be the constant function with value $i$ for each $i$, so that $(\llb \delta_i \rrb : i \in \Nb)$ is a uniformly computable sequence of all the solid colors of $\prod_C \Nb$.  Now consider the computable $\Sigma_1$-type $p(x; [\varphi], [\psi], \llb \delta_0 \rrb, \llb \delta_1 \rrb, \dots)$ consisting of the formula expressing $[\psi] \prec x \prec [\varphi]$ and, for each $i$, the formula expressing $F(x) \neq \llb \delta_i \rrb$.  The set of formulas $p$ is indeed a type because every solid color occurs between $[\varphi]$ and $[\psi]$.  Thus $p$ is realized by some $[\theta] \in |\prod_C \mc{L}|$ by Theorem~\ref{thm-SatDelta2} item~\ref{it-nDecRecSatStrDelta2}.  Then $[\psi] \prec_{\prod_C \mc{L}} [\theta] \prec_{\prod_C \mc{L}} [\varphi]$, and $F^{\prod_C \mc{O}}([\theta])$ is a striped color because it is not equal to any solid color.  Thus $\prod_C \mc{O}$ also satisfies Definition~\ref{def-PowColorful} item~\ref{it-ColorfulNonstd}.

Given a $\Pi_1$ cohesive set $C$, we can compute a colored copy $\mc{O}$ of $\omega$ such that $\prod_C \mc{O}$ is colorful by~\cite{CohPowJournal}*{Theorem~5.3}.  In Section~\ref{sec-CohPowDelta2}, we compute a single colored copy $\mc{O}$ of $\omega$ such that $\prod_C \mc{O}$ is colorful for \emph{every} $\Delta_2$ cohesive set $C$.

If $\mc{O} = (R, \Nb; \prec_\mc{R}, F)$ is a computable colored copy of $\omega$, $\mc{R}$ denotes $(R; \prec_\mc{R})$, and $C$ is a cohesive set for which $\prod_C \mc{O}$ is colorful, then $\prod_C \mc{R} \iso \omega + \eta$.  This is because the standard elements of $\prod_C \mc{R}$ form an initial segment of order-type $\omega$ by Lemma~\ref{lem-StdInitSeg}, the non-standard elements have no endpoints by Lemma~\ref{lem-NonStdNoEnd}, and the colorfulness of $\prod_C \mc{O}$ implies that the non-standard elements of $\prod_C \mc{R}$ are dense.  In~\cite{CohPowJournal}*{Section~6}, order-types other than $\omega + \eta$ are achieved by starting with $\mc{R}$ and computing another copy $\mc{L}$ of $\omega$ by replacing each element of $\mc{R}$ by some finite linear order depending on the element's color and the desired order-type.  The technique for producing $\mc{L}$ from $\mc{O}$ does not depend on the cohesive set $C$:  so long as $\prod_C \mc{O}$ is colorful, the cohesive power $\prod_C \mc{L}$ has the desired order-type.

\begin{Lemma}[\cite{CohPowJournal}*{Lemmas~6.5 and~6.9}]\label{lem-Shuffle}
Let $X \subseteq \Nb \setminus \{0\}$ be a Boolean combination of $\Sigma_2$ sets, thought of as a set of finite order-types.  Let $\mc{O}$ be a computable colored copy of $\omega$.
\begin{itemize}
\item There is a computable copy $\mc{L}$ of $\omega$ (constructed from $\mc{O}$) such that for every cohesive set $C$, if $\prod_C \mc{O}$ is colorful, then $\prod_C \mc{L}$ has order-type $\omega + \bm{\sigma}(X \cup \{\omega + \zeta\eta + \omega^*\})$.

\medskip

\item Moreover, if $X$ is finite and non-empty, then there is also a computable copy $\mc{L}$ of $\omega$ (constructed from $\mc{O}$) such that for every cohesive set $C$, if $\prod_C \mc{O}$ is colorful, then $\prod_C \mc{L}$ has order-type $\omega + \bm{\sigma}(X)$.
\end{itemize}
\end{Lemma}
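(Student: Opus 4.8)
Here is a plan for proving Lemma~\ref{lem-Shuffle} (which is~\cite{CohPowJournal}*{Lemmas~6.5 and~6.9}).

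The plan is to carry out the blow-up construction: given $\mc{O} = (R, \Nb; \prec_\mc{R}, F)$ with underlying computable copy $\mc{R} = (R; \prec_\mc{R})$ of $\omega$, I would build $\mc{L}$ by replacing each point $r \in R$ with a finite linear order whose length is governed by the colour $F(r)$ together with a running approximation to membership in $X$. Since $X$ is a Boolean combination of $\Sigma_2$ sets, relativizing the Ershov difference hierarchy to $0'$ and applying the limit lemma supplies a computable (double-limit) approximation $\gamma$ to the characteristic function of $X$ whose outer limit converges with a uniformly bounded number of mind changes; in the ``moreover'' case $X$ is finite and hence computable, so one takes $\gamma$ to be $X$ itself. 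I would then define a total computable $s \colon R \to \Nb$ by setting $s(r)$, for $r$ of colour $d = F(r)$ processed at stage $t$, to be $d$ if the stage-$t$ value of $\gamma$ says $d \in X$ and $0$ otherwise (in the ``moreover'' case: $d$ if $d \in X$ and $\min X$ otherwise), and take $\mc{L} = \sum_{r \in \mc{R}} \bm{s(r)}$. Infinitely many $s(r)$ are positive, so $\mc{L}$ is a computable copy of $\omega$; the transitions between consecutive chunks of $\mc{L}$ inherit the computationally opaque successor structure of $\mc{R}$, whereas one navigates computably within a chunk.

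Next I would fix a cohesive set $C$ with $\prod_C \mc{O}$ colorful and analyze $\prod_C \mc{L}$. By Lemma~\ref{lem-StdInitSeg} the standard elements form an initial segment of type $\omega$, so $\prod_C \mc{L} \iso \omega + \mc{M}$ for the order $\mc{M}$ of non-standard elements. Colorfulness gives $\prod_C \mc{R} \iso \omega + \eta$ (as recorded before Definition~\ref{def-PowColorful}), so the non-standard elements of $\prod_C \mc{R}$ form a dense order without endpoints, and each such element $[\psi]$ expands under the blow-up into the convex set of classes $[\theta]$ with $\theta(n) = (\psi(n), i(n))$ and $i(n) < s(\psi(n))$. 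Using Lemma~\ref{lem-DifferentBlocks} together with cohesiveness of $C$ for the c.e.\ sets $\{n : \psi(n)\da \,\land\, s(\psi(n)) > j\}$, this set is a single block of type $\bm v$ when $s(\psi(n))$ settles to $v$ along $C$, and otherwise (when $s(\psi(n)) \to \infty$ along $C$) it is an infinite interval with finite condensation $\bm{1} + \eta + \bm{1}$ which, being $\Sigma_1$-recursively saturated by Theorem~\ref{thm-SatDelta2}, has order type $\omega + \zeta\eta + \omega^*$. For a solid colour $d$ the approximation $\gamma$ has stabilized by the time the relevant large positions of $\mc{R}$ are processed, so the expansion over $[\psi]$ is a block of type $\bm d$ when $d \in X$ and is empty when $d \notin X$. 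Invoking colorfulness to see that the type-$\bm d$ blocks ($d \in X$) and the $\omega + \zeta\eta + \omega^*$-intervals occur densely among the non-empty expansions, and Theorems~\ref{thm-DenseBlocks} and~\ref{thm-CoCeDenseCond} to see that $\condF(\prod_C \mc{L})$ is $\bm{1} + \eta$, one concludes $\mc{M} \iso \bm{\sigma}(X \cup \{\omega + \zeta\eta + \omega^*\})$. In the ``moreover'' case $s(\psi(n)) \in X$ for every $n$, so (since $X$ is finite) $s(\psi(n))$ is eventually constant along $C$ by cohesiveness; every expansion is then a block of type $\bm v$ for some $v \in X$, each such $v$ occurs densely, and $\prod_C \mc{L} \iso \omega + \bm{\sigma}(X)$.

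The hard part will be the bookkeeping ensuring that the expansions produced by striped colours behave as claimed: that a striped colour leaves no spurious finite block of size outside $X$, that the infinite expansions merge with no neighbouring block, and that a full copy of $\omega + \zeta\eta + \omega^*$ really does arise densely in $\mc{M}$ rather than only fragments of one. This is where the full strength of colorfulness --- every solid colour and at least one striped colour occurring densely between any two non-standard elements --- and the $\Sigma_1$-recursive saturation of the cohesive power (Theorem~\ref{thm-SatDelta2}) are needed, and it is precisely the verification carried out in~\cite{CohPowJournal}*{Lemmas~6.5 and~6.9}.
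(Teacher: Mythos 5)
Your overall strategy---replacing each point of $\mc{R}$ by a finite block whose size $s(r)$ depends on the colour $F(r)$ and on a running approximation to $X$, then analysing $\prod_C \mc{L}$ via the expansions of non-standard points of $\prod_C\mc{R}$---matches the shape of the construction the paper alludes to just before the lemma, and your rule for the ``moreover'' clause (finite, hence computable, $X$) is fine. But the concrete rule $s(r) \in \{0, F(r)\}$, chosen according to whether a snapshot of $\gamma$ currently believes $F(r) \in X$, fails the general case in two ways. When $X = \emptyset$ you get $s \equiv 0$, so $\mc{L}$ is empty and your own check ``infinitely many $s(r)$ are positive'' already breaks, whereas the lemma still requires $\prod_C \mc{L} \iso \omega + \zeta\eta$. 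More seriously, for a solid colour $d \notin X$ a $\Sigma_2$-style approximation must flip away from $1$ infinitely often but may also flip back to $1$ infinitely often; the computable set $\{r : F(r) = d \wedge s(r) = d\}$ can therefore be infinite, and for some non-standard $[\psi]$ of colour $d$ cohesiveness will put $C$ on the side $\{n : s(\psi(n)) = d\}$, producing a spurious finite block of size $d \notin X$. Colorfulness of $\prod_C \mc{O}$ tells you which colours occur densely in the power but says nothing about which side of a computable partition $C$ chooses, so it cannot rule this out.

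The missing idea is that for $r$ of colour $d$ the sizes $s(r)$ must grow without bound whenever $d \notin X$, so that over any cohesive $C$ every expansion of solid colour $d \notin X$ degenerates to the type $\omega + \zeta\eta + \omega^*$ already present in the shuffle, while for $d \in X$ the sizes must stabilize at $d$ for all sufficiently large $r$ of colour $d$ (not merely along $C$, since $C$ is arbitrary). Arranging both simultaneously from the $\Sigma_2$ (and Boolean-combination-of-$\Sigma_2$) normal form of $X$ is the technical core of \cite{CohPowJournal}*{Lemmas~6.5 and~6.9}; a generic double-limit approximation supplied by the limit lemma relativized to $0'$ does not by itself yield a computable $s$ with these two opposite convergence behaviours, and your sketch explicitly defers exactly this bookkeeping to the cited source.
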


We emphasize that the linear order $\mc{L}$ in the conclusion of Lemma~\ref{lem-Shuffle} is not the underlying linear order of $\mc{O}$, but it is obtained from the underlying linear order of $\mc{O}$ by replacing its elements by finite linear orders.  Theorem~\ref{thm-BCSigma2ShuffleOld} is achieved by starting with a $\Pi_1$ cohesive set $C$, computing a colored copy $\mc{O}$ of $\omega$ such that $\prod_C \mc{O}$ is colorful as in~\cite{CohPowJournal}*{Theorem~5.3}, and then applying Lemma~\ref{lem-Shuffle}.

\section{Infinite \texorpdfstring{$\Pi_1$}{Pi\_1} sets without \texorpdfstring{$\Delta_2$}{Delta\_2} cohesive subsets}\label{sec-NoCOH}

The lattice of c.e.\ sets modulo finite difference ordered by $\subseteq^*$ is a major object of study in computability theory, and we refer the reader to~\cites{RogersBook, SoareBookRE} for a thorough treatment of the subject.  Recall that a c.e.\ set $M$ is called \emph{maximal} if $M \subsetneq^* \Nb$ and there is no c.e.\ set $X$ with $M \subsetneq^* X \subsetneq^* \Nb$.  Friedberg~\cite{FriedbergThreeTheorems} shows that maximal sets exist, and D.\ Martin~\cite{Martin} shows that there is a c.e.\ set $A \subsetneq^* \Nb$ that has no maximal superset.  It follows from the definitions that for any set $M \subseteq \Nb$, $M$ is maximal if and only if $\ol{M}$ is an infinite $\Pi_1$ cohesive set.  Via this correspondence, Martin's result may be rephrased as stating that there is an infinite $\Pi_1$ set that has no $\Pi_1$ cohesive subset.  There is also an infinite $\Pi_1$ set that has no $\Delta_2$ r-cohesive subset, which follows from work of Lerman, Shore, and Soare~\cite{LermanShoreSoare}.

Let $X$ and $Y$ be c.e. sets where $Y \subseteq X$ and $X \setminus Y$ is infinite.  In this situation,
\begin{itemize}
\item $Y$ is \emph{r-maximal} in $X$ if $X \setminus Y$ is r-cohesive;
\smallskip
\item $Y$ is a \emph{major} subset of $X$ if $\forall \text{ c.e.\ sets } W\; (\ol{X} \subseteq^* W \imp \ol{Y} \subseteq^* W)$;
\smallskip
\item $Y$ is an \emph{r-maximal major} subset of $X$ if $Y$ is both r-maximal in $X$ and a major subset of $X$.
\end{itemize}

Lerman, Shore, and Soare~\cite{LermanShoreSoare}*{Theorem~1.2} show that a c.e.\ set $X$ has an r-maximal major subset if and only if it has a $\Delta_3$ \emph{preference function}, where preference functions are defined as follows.

\begin{Definition}[\cite{LermanShoreSoare}*{Definition~1.1}]
For each $e$, let $R^{e,1} = \{n : (\forall m \leq n\; \varphi_e(m)\da) \;\land\; \varphi_e(n) = 1\}$, and let $R^{e,0} = \ol{R^{e,1}}$.  Notice that $(R^{e,1})_{e \in \Nb}$ is a uniformly c.e.\ sequence of the computable sets.  An \emph{inclination function}\footnote{We introduce the term \emph{inclination function} here for expository purposes.} for a set $X$ is a function $h \colon \Nb \to \{0,1\}$ such that for all $n$, the set $X \cap \bigcap_{e < n}R^{e,h(e)}$ is infinite.  A \emph{preference function} for a set $X$ is a function $h \colon \Nb \to \{0,1\}$ that is an inclination function for both $X$ and $\ol{X}$.
\end{Definition}

Lerman, Shore, and Soare observe that if $X$ is a simple set (i.e., $X$ is c.e.\ and $\ol{X}$ is infinite but has no infinite c.e.\ subset), then every inclination function for $\ol{X}$ is automatically a preference function for $X$.  Thus a simple set $X$ has a preference function if and only if $\ol{X}$ has an inclination function.

It is not hard to show that an infinite $\Delta_2$ set has a $\Delta_2$ r-cohesive subset if and only if it has a $\Delta_3$ inclination function.

\begin{Proposition}\label{prop-rCohInc}
Let $B \subseteq \Nb$ be an infinite $\Delta_2$ set.  Then $B$ has a $\Delta_2$ r-cohesive subset if and only if $B$ has a $\Delta_3$ inclination function.
\end{Proposition}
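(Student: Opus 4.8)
The proof is the conjunction of the two implications; the forward one is a short complexity computation and the backward one needs a small construction.

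\emph{Forward direction.} Suppose $C \subseteq B$ is infinite, $\Delta_2$, and r-cohesive. I would set $h(e) = 1$ if $C \cap R^{e,1}$ is infinite and $h(e) = 0$ otherwise. Since each $R^{e,1}$ is computable and $R^{e,0} = \ol{R^{e,1}}$, r-cohesiveness of $C$ gives $C \subseteq^* R^{e,1}$ or $C \subseteq^* R^{e,0}$, and infinitude of $C$ excludes both, so $h(e)$ is the unique $i \in \{0,1\}$ with $C \subseteq^* R^{e,i}$ and $C \cap R^{e,h(e)}$ is cofinite in $C$. Hence $C \setminus \bigcap_{e<n}R^{e,h(e)}$ is finite for every $n$, so $B \cap \bigcap_{e<n}R^{e,h(e)} \supseteq C \cap \bigcap_{e<n}R^{e,h(e)}$ is infinite, i.e., $h$ is an inclination function for $B$. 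For the complexity, note that relative to $\emptyset'$ the set $C$ is computable (as $C \leqT \emptyset'$) and each $R^{e,1}$ is computable uniformly in $e$ (membership in $R^{e,1}$ being $\Sigma_1$ uniformly), so ``$C \cap R^{e,1}$ is infinite'' is a $\Pi_2$ condition relative to $\emptyset'$, hence $\Pi_3$ uniformly in $e$, and similarly ``$C \cap R^{e,0}$ is infinite'' is $\Pi_3$. As these two conditions on $e$ are complementary, $\{e : h(e)=1\}$ lies in $\Pi_3 \cap \Sigma_3$, so $h$ is a $\Delta_3$ inclination function for $B$.

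\emph{Backward direction.} Suppose $h$ is a $\Delta_3$ inclination function for $B$. The naive idea — let $c_n$ be the least element of $B \cap \bigcap_{e \le n}R^{e,h(e)}$ above $c_{n-1}$ — produces an infinite r-cohesive subset of $B$, but of degree $h \oplus \emptyset' \leqT \emptyset''$, which is one jump too high. Instead I would fix, by the limit lemma applied to $f = \emptyset'$ (truncating values to $\{0,1\}$), a function $g \leqT \emptyset'$ with $\lim_s g(e,s) = h(e)$ for every $e$, and run the selection relative to $\emptyset'$ but driven by $g$ with no backtracking: at stage $s$, let $n$ be least such that $c_n$ is not yet defined, search for the least $x \le s$ with $x > c_{n-1}$ (if $n>0$), $x \in B$, and $x \in R^{e,g(e,s)}$ for all $e < n$, and if such an $x$ is found, set $c_n := x$ permanently (otherwise do nothing at stage $s$). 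Each stage is $\emptyset'$-computable, since membership of $x$ in $B$ and in $R^{e,1}$, and the values $g(e,s)$, are all $\emptyset'$-decidable.

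I would then verify four points. (i) Every $c_n$ eventually becomes defined: induct on $n$, using that $B \cap \bigcap_{e<n}R^{e,h(e)}$ is infinite (as $h$ is an inclination function for $B$) and that $g(e,s) = h(e)$ for all $e<n$ once $s$ is large, so the bounded search eventually succeeds. (ii) $C := \{c_n : n \in \Nb\}$ satisfies $C \leqT \emptyset'$: with oracle $\emptyset'$, simulate the construction, read off each $c_n$ at the finite stage it is defined, and use that the $c_n$ are strictly increasing, so membership in $C$ is $\emptyset'$-decidable. (iii) $C \subseteq B$ and $C$ is infinite, both immediate. (iv) $C \subseteq^* R^{e,h(e)}$ for every $e$: a marker $c_n$ with $n > e$ was placed, at the stage $s$ of its definition, inside $R^{e,g(e,s)}$; only finitely many stages $s$ have $g(e,s) \neq h(e)$, and distinct markers are defined at distinct stages, so all but finitely many of the $c_n$ with $n > e$ lie in $R^{e,h(e)}$, and adding the at most $e+1$ markers with $n \le e$ still leaves only finitely many outside $R^{e,h(e)}$. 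Since every computable set equals $R^{e,1}$ for some $e$ (take $\varphi_e$ to be its characteristic function), (iv) shows $C$ is r-cohesive, so $C$ is a $\Delta_2$ r-cohesive subset of $B$.

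The single real obstacle is the jump gap in the backward direction: an inclination function is only guaranteed to be $\Delta_3$, yet $C$ must be $\Delta_2$. The device that resolves it is that a $\emptyset'$-computable approximation $g$ to $h$, while incorrect infinitely often in total, is incorrect only finitely often at each fixed coordinate $e$; committing every marker irrevocably turns the construction into an honest $\emptyset'$-algorithm (so $C$ comes out $\Delta_2$, not merely $\Delta_3$) while confining the ``error'' charged to each requirement $C \subseteq^* R^{e,1} \vee C \subseteq^* R^{e,0}$ to a finite set. Everything else is routine bookkeeping.
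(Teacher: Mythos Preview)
Your proof is correct and matches the paper's approach in all essentials: the forward direction defines $h$ via the side of $(R^{e,0},R^{e,1})$ that $C$ almost lies in and computes its complexity as $\Delta_3$, and the backward direction uses a $0'$-computable approximation $g$ to $h$ to drive a $0'$-recursive selection of an increasing sequence inside $B$. The only cosmetic difference is that you run the backward construction as a stage-by-stage process (committing one marker per stage), whereas the paper defines $c_{n+1}$ directly by searching for the least pair $\la s,c\ra$ with $s>n$, $c>c_n$, and $c\in B\cap\bigcap_{e<n}R^{e,g(e,s)}$; both formulations yield the same $\Delta_2$ set and the same verification of r-cohesiveness.
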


\begin{proof}
First suppose that $h \leqT 0''$ is an inclination function for $B$.  By the limit lemma, there is a function $g \colon \Nb^2 \to \{0,1\}$ with $g \leqT 0'$ such that $\lim_s g(e, s) = h(e)$ for every $e$.  We compute an increasing enumeration $c_0 < c_1 < c_2 < \cdots$ of an r-cohesive set $C \subseteq B$ from $0'$ using that $B \leqT 0'$, that $g \leqT 0'$, and the fact the sequence $(R^{e,i} : e \in \Nb, i \in \{0,1\})$ is uniformly computable from $0'$.  The characteristic function of $C$ may then be computed from its increasing enumeration, so $C$ is the desired $\Delta_2$ r-cohesive subset of $B$.

Let $c_0$ be the least element of $B$.  Suppose we have already determined $c_0 < c_1 < \cdots < c_n$.  To find $c_{n+1}$, search for the least pair $\la s, c \ra$ with $s > n$ and $c > c_n$ and $c \in B \cap \bigcap_{e < n}R^{e, g(e,s)}$.  Then let $c_{n+1} = c$.  Such a pair $\la s, c \ra$ always exists because if $s$ is sufficiently large, then $\forall e < n \; (g(e,s) = h(e))$, in which case $B \cap \bigcap_{e < n}R^{e, g(e,s)}$ is infinite.

To check that $C$ is r-cohesive, let $X$ be a computable set, and let $e$ be such that $X = R^{e,1}$.  Let $s_0$ be large enough so that $g(e,s) = h(e)$ for all $s \geq s_0$.  Then $c_{n+1}$ is chosen from $R^{e,h(e)}$ whenever $n \geq s_0$.  Thus if $h(e) = 1$, then $C \subseteq^* R^{e,1} = X$; and if $h(e) = 0$, then $C \subseteq^* R^{e,0} = \ol{X}$.  So $C$ is r-cohesive.

Conversely, suppose that $B$ has a $\Delta_2$ r-cohesive subset $C$.  The sets $\{e : C \subseteq^* R^{e,0}\}$ and $\{e : C \subseteq^* R^{e,1}\}$ are each $\Sigma_3$, and they are complements because $C$ is r-cohesive.  Thus the sets are $\Delta_3$.  Let $h$ be the characteristic function of $\{e : C \subseteq^* R^{e,1}\}$.  Then $C \subseteq^* \bigcap_{e < n}R^{e,h(e)}$ for every $n$, so $B \cap \bigcap_{e < n}R^{e,h(e)}$ is infinite for every $n$ because $C \subseteq B$.  Thus $h$ is a $\Delta_3$ inclination function for $B$.
\end{proof}

Lerman, Shore, and Soare~\cite{LermanShoreSoare}*{Theorem~2.7} show that there is a simple (indeed, hyperhypersimple) set $X$ that has no $\Delta_3$ preference function.  Therefore $\ol{X}$ is an infinite $\Pi_1$ set that has no $\Delta_3$ inclination function and hence no $\Delta_2$ r-cohesive subset.

We give a direct construction of an infinite $\Pi_1$ set with no $\Delta_2$ p-cohesive subset.  To organize this, we make use of a \emph{uniform sequence containing all $\Delta_2$-approximations} and a computable \emph{sequence of staggered partitions}.  Recall that a computable function $g \colon \Nb^2 \to \{0,1\}$ is a $\Delta_2$-approximation to a set $D \subseteq \Nb$ if $\forall n \, (\lim_s g(n,s) = D(n))$ and that, by the limit lemma, $D$ is $\Delta_2$ if and only if it has a $\Delta_2$-approximation.

\begin{Definition}\label{def-UniformDelta2Approx}
A \emph{uniform sequence containing all $\Delta_2$-approximations} is a total computable function $g \colon \Nb^3 \to \{0,1\}$ such that for every $\Delta_2$ set $D$, there is an $e$ for which the function $g_e \colon \Nb^2 \to \{0,1\}$ given by $g_e(n,s) = g(e,n,s)$ is a $\Delta_2$-approximation to $D$.
\end{Definition}

A uniform sequence $g \colon \Nb^3 \to \{0,1\}$ containing all $\Delta_2$-approximations may be computed as follows.  On input $(e,n,s)$, search for the greatest $t \leq s$ such that $\varphi_{e,s}(n,t)\da$.  If $\varphi_{e,s}(n,t) = 1$, then output $g(e,n,s) = 1$.  If $\varphi_{e,s}(n,t) \neq 1$ (or if there is no $t \leq s$ such that $\varphi_{e,s}(n,t)\da$), then output $g(e,n,s) = 0$.  Notice that $g$ is total.  Write $g_e(n,s)$ for $g(e,n,s)$ for every $e$, $n$, and $s$.  If $\varphi_e$ is total, $\{0,1\}$-valued, and $\lim_s \varphi_e(n,s)$ exists for every $n$, then $\lim_s g_e(n,s)$ exists and equals $\lim_s \varphi_e(n,s)$ for every $n$.  Thus if $D$ is a $\Delta_2$ set, then some $\varphi_e(n,s)$ is a $\Delta_2$-approximation to $D$, in which case $g_e$ is also a $\Delta_2$-approximation to $D$.

If $g$ is a uniform sequence containing all $\Delta_2$-approximations, then every $g_e$ is total, and every $\Delta_2$ set is approximated by some $g_e$.  However, it is not the case that every $g_e$ is a $\Delta_2$-approximation.  There are many $e$ and $n$ for which $\lim_s g_e(n,s)$ does not exist.

For each $n \in \Nb$, let $\{0,1\}^n$ denote the set of binary sequences of length $n$.

\begin{Definition}\label{def-StagPart}
A \emph{sequence of staggered partitions} is a sequence $(A^{i,0}, A^{i,1})_{i \in \Nb}$ of pairs of subsets of $\Nb$ such that
\begin{itemize}
\item for each $i$, $A^{i,0}$ and $A^{i,1}$ partition $\Nb$ into two parts (i.e., $A^{i,1} = \ol{A^{i,0}}$), and

\medskip

\item $\forall n \; \forall \sigma \in \{0,1\}^n \; \bigl(\text{$\bigcap_{i < n} A^{i, \sigma(i)}$ is infinite}\bigr)$.
\end{itemize}

An infinite $C \subseteq \Nb$ is \emph{cohesive} for a sequence of staggered partitions $\vec{A} = (A^{i,0}, A^{i,1})_{i \in \Nb}$, or \emph{$\vec{A}$-cohesive}, if $\forall i\, (C \subseteq^* A^{i,0} \;\lor\; C \subseteq^* A^{i,1})$.
\end{Definition}
Of course, $C$ being cohesive for the sequence $(A^{i,0}, A^{i,1})_{i \in \Nb}$ of staggered partitions means the same thing as $C$ being cohesive for the sequence $(A^{i,0})_{i \in \Nb}$ (or for the sequence $(A^{i,1})_{i \in \Nb}$).

We may compute a sequence $(A^{i,0}, A^{i,1})_{i \in \Nb}$ of staggered partitions as follows.  Given $i$, partition $\Nb$ into successive pieces of size $2^i$, let $A^{i,0}$ consist of every other piece, and let $A^{i,1} = \ol{A^{i,0}}$.  Indeed, for $i \in \Nb$ and $j \in \{0,1\}$, we may take $A^{i,j}$ to be the set of numbers whose binary expansions have bit $j$ in position $i$.  Then each $A^{i,j}$ is primitive recursive.  In fact, the sequence $(A^{i,0}, A^{i,1})_{i \in \Nb}$ is uniformly primitive recursive.  We show that for any uniformly computable sequence $\vec{A}$ of staggered partitions, there is an infinite $\Pi_1$ set with no $\Delta_2$ $\vec{A}$-cohesive subset.  If all the sets of $\vec{A}$ are from a particular class, then it follows that there is an infinite $\Pi_1$ set with no $\Delta_2$ subset that is cohesive for that class.  There is a uniformly primitive recursive sequence of staggered partitions, so there is an infinite $\Pi_1$ set with no $\Delta_2$ p-cohesive subset.

\begin{Theorem}\label{thm-NoCOH-helper}
Let $\vec{A} = (A^{i,0}, A^{i,1})_{i \in \Nb}$ be a uniformly computable sequence of staggered partitions.  Then there is an infinite $\Pi_1$ set with no $\Delta_2$ $\vec{A}$-cohesive subset.
\end{Theorem}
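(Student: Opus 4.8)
The plan is to build the $\Pi_1$ set as the complement of a c.e.\ set $W$, enumerated in stages, while diagonalizing against every candidate $\Delta_2$ $\vec{A}$-cohesive subset. Using the uniform sequence $g$ containing all $\Delta_2$-approximations from Definition~\ref{def-UniformDelta2Approx}, we may think of $g_e$ as coding the $e$-th candidate subset $D_e$ (at stage $s$, the finite set $D_{e,s} = \{n \le s : g_e(n,s) = 1\}$), and it suffices to meet, for every $e$, the requirement
\[
\mathcal{R}_e : \ \text{$\lim_s D_{e,s}$ exists and is an infinite $\vec{A}$-cohesive subset of $\ol{W}$} \ \Longrightarrow \ \text{contradiction.}
\]
Since every $\Delta_2$ set is $\lim$-approximated by some $g_e$, meeting all $\mathcal{R}_e$ guarantees that $\ol{W}$ has no $\Delta_2$ $\vec{A}$-cohesive subset, and keeping $\ol{W}$ infinite (a standard global restraint, e.g.\ permanently reserving one fresh element below each power of two) makes $\ol{W}$ an infinite $\Pi_1$ set.

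The strategy for a single $\mathcal{R}_e$ exploits the staggered-partition structure. Assume for contradiction $D_e = \lim_s D_{e,s}$ is infinite, $\vec{A}$-cohesive, and contained in $\ol{W}$. Cohesiveness means that for every $i$ there is a side $h(i) \in \{0,1\}$ with $D_e \subseteq^* A^{i,h(i)}$; and for every $n$ the set $\bigcap_{i<n} A^{i,h(i)}$ is infinite by the staggered-partition property, so $D_e$ has infinitely many elements in each such ``cylinder''. The diagonalization is: pick a large fresh witness $x$, wait for a stage $s$ at which $x$ enters $D_{e,s}$ and $x$ lies in $\bigcap_{i<n} A^{i,\sigma(i)}$ for some specific $\sigma \in \{0,1\}^n$ with $n$ comparatively small; then enumerate $x$ into $W$. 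If $D_e$ truly contains $x$ permanently, this contradicts $D_e \subseteq \ol{W}$; otherwise $x$ eventually leaves $D_{e,s}$, and because $D_e$ is infinite in each relevant cylinder we get infinitely many such witnesses $x$, and the associated ``which side does $D_e$ settle on'' information lets us compute (from $\emptyset''$, via the limit of $g_e$) an $\vec{A}$-cohesive set inside $\ol{W}$ — but $\ol{W}$ is c.e.-complement and we have arranged via the enumeration that no such set survives. The cleanest packaging is a win–win: either $x$ is stably in $D_e$, immediately contradicting $D_e \subseteq \ol{W}$ after we put $x$ into $W$, or $D_e$ must avoid all our witnesses, and we show this forces $D_e$ not to be $\vec{A}$-cohesive after all, by choosing the $\sigma$'s so that avoiding every witness pushes $D_e$ out of every cylinder $\bigcap_{i<n}A^{i,\sigma(i)}$ simultaneously for the two possible values of $h$ on the newest coordinate. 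Since $\{0,1\}^n$ is finite, a single $\mathcal{R}_e$ only needs finitely many witnesses active at a time and acts finitely often, so a finite-injury priority arrangement over all $e$ suffices.

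Concretely, $\mathcal{R}_e$ maintains, for each $\sigma \in \{0,1\}^{n_e}$ at its current length $n_e$, a distinct large follower $x_\sigma$, chosen so that $x_\sigma \in \bigcap_{i<n_e} A^{i,\sigma(i)}$ (possible since that intersection is infinite and $\vec{A}$ is uniformly computable) and so that $x_\sigma$ is larger than all restraint imposed by higher-priority requirements. When some follower $x_\sigma$ enters $D_{e,s}$, put $x_\sigma$ into $W$ and declare $\mathcal{R}_e$ acted (dropping followers injured by higher priority as usual); then increase $n_e$ and appoint fresh followers for $\{0,1\}^{n_e}$. The verification is: if $\mathcal{R}_e$ acts infinitely often it must be because higher-priority injury keeps restarting it, which happens only finitely often, contradiction; so $\mathcal{R}_e$ acts finitely often, hence from some stage on has a fixed length $n_e = n$ and fixed followers $\{x_\sigma : \sigma \in \{0,1\}^n\}$ none of which is ever put into $W$ after that stage — meaning each such $x_\sigma$, if it is in $D_e$, witnesses $D_e \not\subseteq \ol{W}$ only if it went into $W$ before, so either $D_e \not\subseteq \ol{W}$ (done) or every $x_\sigma \notin D_e$. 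In the latter case, cohesiveness gives $h\restriction n$ with $D_e \subseteq^* A^{i,h(i)}$ for $i<n$, so $D_e \subseteq^* \bigcap_{i<n} A^{i,h(i)}$, which is infinite and contains the follower $x_{h\restriction n}$; but $D_e$ is cofinite-in-$\ol{W}$ infinite and $x_{h\restriction n}$ was chosen larger than all restraint, so actually $D_e$ must still be forced to contain infinitely many elements of that cylinder, and our repeated appointment of followers there — continued had $\mathcal{R}_e$ not frozen — shows the freeze only happens if $D_e$ avoids that cylinder entirely beyond $x_{h\restriction n}$, i.e.\ $D_e \not\subseteq^* A^{i,h(i)}$ for some $i<n$, contradicting $\vec{A}$-cohesiveness.

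The main obstacle is the interaction between the two-sided nature of $\vec{A}$-cohesiveness and the non-monotonic $\Delta_2$-approximation $g_e$: I must ensure that the branching over $\sigma \in \{0,1\}^n$ and the growth of $n_e$ are organized so that, no matter which side-function $h$ the limit $D_e$ eventually commits to, some appointed follower lies on the ``correct'' side to force the contradiction, while simultaneously keeping the whole construction finite-injury so that $\ol{W}$ stays infinite and $\Pi_1$. This is exactly the point where the \emph{staggered} (rather than arbitrary) nature of the partitions is essential — the guarantee that \emph{every} finite cylinder $\bigcap_{i<n}A^{i,\sigma(i)}$ is infinite is what lets us always find a fresh follower on the side we need. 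Modulo that bookkeeping, every other ingredient (the global restraint for infinitude, the finite-injury priority argument, the totality and all-inclusiveness of $g$) is routine, so I would spend the writeup making the follower-appointment scheme and the win–win verification fully explicit.
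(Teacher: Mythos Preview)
Your proposal has a genuine gap in the verification, and the underlying strategy does not seem repairable along the lines you sketch.

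The win--win argument breaks in the ``freeze'' branch. Suppose $\mathcal{R}_e$ settles with final length $n$ and followers $\{x_\sigma : \sigma \in \{0,1\}^n\}$, none of which ever again appears in $D_{e,s}$. You correctly conclude that no $x_\sigma$ lies in $D_e$. But this is only $2^n$ specific points. If $D_e$ is genuinely infinite, $\vec{A}$-cohesive with side function $h$, and contained in $\ol{W}$, then $D_e \subseteq^* \bigcap_{i<n} A^{i,h(i)}$; yet nothing forces $D_e$ to contain the particular element $x_{h\rst n}$ that you happened to appoint in that cylinder. Your sentence ``our repeated appointment of followers there --- continued had $\mathcal{R}_e$ not frozen --- shows the freeze only happens if $D_e$ avoids that cylinder entirely'' is exactly the missing step: $\mathcal{R}_e$ \emph{did} freeze, so there is no repeated appointment, and avoiding one point of a cylinder is not avoiding the cylinder. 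No contradiction follows.

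There is also a prior problem: your claim that $\mathcal{R}_e$ acts only finitely often is not justified. You attribute infinite action to higher-priority injury, but the strategy itself can act infinitely often whenever $g_e$ is ill-behaved. Each time any follower temporarily enters $D_{e,s}$ you enumerate it and grow $n_e$; an oscillating $g_e$ (which need not approximate any set) can trigger this forever, threatening the infinitude of $\ol{W}$. The global restraint you mention (reserving one element per interval) does not help, since $D_e$ could live entirely inside the reserved elements. More generally, any strategy that tries to \emph{hit} $D_e$ by enumerating a witness into $W$ faces the difficulty that a finite number of enumerations can all miss $D_e$ (they may have been in $D_{e,s}$ only transiently), while infinitely many enumerations destroy $\ol{W}$.

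The paper avoids this by a fundamentally different idea: rather than hitting $C$, it \emph{steers $B$ away from} $C$. It runs a movable-markers construction in which, for each candidate pair $\la e, N\ra$ guessing an approximation $g_e$ and a threshold $N$, one detects an ``active side'' $a \in \{0,1\}$ such that (if the guess is right) $C \subseteq^* A^{e,a}$, and then moves markers so that their final positions land in $A^{e,1-a}$. Thus $B \subseteq^* A^{e,1-a}$ while $C \subseteq^* A^{e,a}$, giving $B \cap C$ finite with no need to identify a single element of $C$. Ill-behaved $g_e$ are neutralized by a counter $\ct(q)$ that records how often the active side flips; once $\ct(q) \geq i$, requirement $q$ loses the ability to move marker $m_i$, so infinite flipping is harmless. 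The staggered partitions are used not to place followers, but to guarantee that when a marker must move to a prescribed intersection of ``opposite sides'' there is always a target available.
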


\begin{proof}
We implement a movable markers style construction of an infinite $\Pi_1$ set $B$ with no $\Delta_2$ $\vec{A}$-cohesive subset.  For each $i, s \in \Nb$, let $m_{i,s}$ denote the position of marker $m_i$ at stage $s$.  Say that a number $n$ is \emph{marked} at stage $s$ if $n = m_{i,s}$ for some $i$, and say that $n$ is \emph{unmarked} at stage $s$ otherwise.  At stage $0$, we start with $m_{i,0} = i$ for each $i$.  At stage $s > 0$, for each $i$, we move marker $m_i$ to the previous position of marker $m_j$ for some $j \geq i$.  That is, for each $i$, we set $m_{i,s} = m_{j,s-1}$ for some $j \geq i$.  When we move the markers, we do so in such a way as to maintain that at every stage $s$, only finitely many numbers are unmarked and $\forall i \, (m_{i, s} < m_{i+1, s})$.  If a number is unmarked at stage $s$, then it is unmarked at all later stages.  The set of numbers that are ever unmarked during the course of the construction is $\Sigma_1$.  Its complement is the desired $\Pi_1$ set $B$, which consists of the final positions of all the markers.

Let $g \colon \Nb^3 \to \{0,1\}$ be a uniform sequence containing all $\Delta_2$-approximations.  As above, write $g_e(n,s)$ for $g(e,n,s)$.

If $C$ is an $\vec{A}$-cohesive set, then for every $e$, either $C \subseteq^* A^{e,0}$ or $C \subseteq^* A^{e,1}$.  The goal of the construction is to arrange, for every $\Delta_2$ $\vec{A}$-cohesive set $C$, that if $C \subseteq^* A^{e,a}$ for $a \in \{0,1\}$, then almost every marker eventually settles on a member of $A^{e, 1-a}$.  If this is achieved, then $C \subseteq^* A^{e,a}$ and $B \subseteq^* A^{e, 1-a}$, so $B \cap C$ is finite.  Thus $B$ has no $\Delta_2$ $\vec{A}$-cohesive subset.

For the purposes of this construction, think of a pair $\la e, N \ra$ as coding a guess that $g_e$ is a $\Delta_2$-approximation of an $\vec{A}$-cohesive set $C$ and that $N$ is a threshold by which either $\forall n \geq N \; (n \in C \imp n \in A^{e,0})$ or $\forall n \geq N \; (n \in C \imp n \in A^{e,1})$.  

Say that pair $\la e, N \ra$ is \emph{active} at stage $s$ if there is a side $a \in \{0,1\}$ of the partition $(A^{e,0}, A^{e,1})$ along with a witness $w$ with $N < w < s$ meeting the following conditions.
\begin{enumerate}[(a)]
\item $g_e(w,s) = 1$.

\medskip

\item $\forall n \; \bigl((N \leq n \leq w \;\land\; g_e(n,s) = 1) \;\imp\; n \in A^{e,a}\bigr)$.
\end{enumerate}
Notice that if $\la e, N \ra$ is active at stage $s$, then there is a unique $a \in \{0,1\}$ meeting these conditions.  Call this $a$ the \emph{active side}, and call the largest witness $w$ (with $N < w < s$) the \emph{activity witness} for $\la e, N \ra$ at stage $s$.

Suppose that pair $q = \la e, N \ra$ is active at stage $s$ with active side $a$ and activity witness $w$.  Then it looks like $g_e$ approximates a $\Delta_2$ set $C$ with $C \subseteq^* A^{e,a}$, so pair $q$ wants to move the markers into $A^{e,1-a}$.  We need to ensure that each marker moves only finitely often, so we stipulate that $q$ may only move marker $m_i$ if $i > q$.  However, $g_e$ may not approximate a $\Delta_2$ set at all, let alone a $\Delta_2$ cohesive set.  If $g_e$ is particularly ill-behaved, then $q$ may change its active side infinitely often and therefore may want to move the markers $m_i$ with $i > q$ back and forth between $A^{e,0}$ and $A^{e,1}$ infinitely often.  To combat this problem, we associate a counter $\ct(q)$ to pair $q$ that counts the number of times that the active side of $q$ changes.  Then we also stipulate that $q$ may only move marker $m_i$ if $i > \ct(q)$.  Finally, the larger the activity witness $w$ gets, the more evidence we have that $g_e$ indeed approximates a $\Delta_2$ set $C$ with $C \subseteq^* A^{e,a}$, and we require seeing this evidence before moving marker $m_i$.  That is, we also stipulate that $q$ may only move marker $m_i$ if $m_i < w$.  The construction now proceeds as follows.

At stage $0$, initialize $m_{i,0} = i$ for each $i$, and initialize $\ct(q) = 0$ for every pair $q$.

At stage $s > 0$, consider all pairs $q < s$.  First update the counters as follows.  For each pair $q < s$, if $q$ is active at stage $s$ and either this is the first stage at which $q$ is active or the active side of $q$ is different than it was at the previous stage at which $q$ was active, then update counter $\ct(q)$ to $\ct(q) + 1$.

Now let $i < s$ be the least number for which there is a pair $q = \la e, N \ra < s$ meeting the following conditions.
\begin{enumerate}[(1)]
\item\label{it:active} $q$ is active at stage $s$ with active side $a$ and activity witness $w$ for some $a$ and $w$.

\medskip

\item\label{it:pri} $q < i$ and $\ct(q) < i$.

\medskip

\item\label{it:wit} $m_{i, s-1} < w$.

\medskip

\item\label{it:side} $m_{i, s-1} \in A^{e, a}$.
\end{enumerate}
If there is no such $i$, then put $m_{k, s} = m_{k, s-1}$ for each $k$ and go on to the next stage.  If there is such an $i$, then let $q$ be the least witnessing pair.  Let 
\begin{align*}
R &= \{\pi_0(p) : p \leq q \;\land\; \ct(p) < i\}.
\end{align*}
For each $y \in R$, let $v$ be the greatest activity witness yet achieved by any pair of the form $\la y, M \ra \leq q$ with $\ct(\la y, M \ra) < i$, let $t$ be the most recent stage at which $v$ was achieved, let $M$ be least such that $\la y, M \ra \leq q$ achieved activity witness $v$ at stage $t$, and let $a_y$ be the corresponding active side.  If no pair $\la y, M \ra \leq q$ has yet been active, then let $a_y = 0$.  Now let $j > i$ be the least number such that
\begin{align*}
m_{j, s-1} \in \bigcap_{y \in R} A^{y, 1-a_y}.
\end{align*}
Such a $j$ exists because the intersection is infinite and almost every number is marked at stage $s-1$.  Advance the markers by setting $m_{k,s} = m_{k,s-1}$ for all $k < i$ and by setting $m_{k,s} = m_{k+j-i, s-1}$ for all $k \geq i$.  In this way we maintain that only finitely many numbers are unmarked at stage $s$ and that $\forall k\, (m_{k,s} < m_{k+1,s})$.  We say that pair $q$ has now \emph{moved marker $m_i$}.  This completes the construction.

We show that for each $i$, marker $m_i$ moves only finitely often.  Consider marker $m_i$, and inductively assume that there is a stage $s_0$ after which no marker $m_k$ with $k < i$ ever moves.  Thus after stage $s_0$, marker $m_i$ is never moved on account of the movement of a marker $m_k$ with $k < i$.  So if marker $m_i$ moves after stage $s_0$, it is because there is a pair $q$ that meets conditions~\ref{it:active}--\ref{it:side} for $i$ and hence moves $m_i$.  If $q \geq i$, then $q$ always fails condition~\ref{it:pri} and hence never moves marker $m_i$.  Thus it suffices to show that each pair $q < i$ moves marker $m_i$ only finitely often.  Fix $q_0 < i$, and inductively assume that there is a stage $s_1 > s_0$ after which no pair $q < q_0$ ever moves marker $m_i$.  There are now several cases.

If $q_0$ is active only finitely often, then condition~\ref{it:active} eventually always fails and therefore $q_0$ moves $m_i$ only finitely often.

If $q_0$ changes its active side at least $i$ many times, then condition~\ref{it:pri} eventually always fails on account of $\ct(q_0) \geq i$.  Thus $q_0$ moves $m_i$ only finitely often.

If $q_0$ does not achieve arbitrarily large activity witnesses, then there is a bound $W$ such that the activity witness of $q_0$ is below $W$ whenever $q_0$ is active.  When marker $m_i$ moves, it moves to mark a larger number than it did previously.  That is, if $m_i$ moves at stage $s$, then $m_{i, s} > m_{i, s-1}$.  Marker $m_i$ can move only finitely often before achieving $m_{i, s} \geq W$.  If $m_i$ does achieve $m_{i, s} \geq W$ at some stage $s$, then condition~\ref{it:wit} fails at all later stages.  Thus $q_0$ moves $m_i$ only finitely often.

Finally, suppose that pair $q_0 = \la e_0, N_0 \ra$ is active infinitely often, that $\ct(q_0) < i$ at all stages, and that $q_0$ achieves arbitrarily large activity witnesses.  As $\ct(q_0) < i$ at all stages, $q_0$ changes its active side only finitely often and therefore eventually settles on some final active side $a \in \{0,1\}$.  That is, there is a stage $s_2 > s_1$ such that $q_0$ has active side $a$ whenever it is active at a stage $s > s_2$.

Consider the pairs of the form $\la e_0, M \ra < i$.  Let $s_3 > s_2$ be large enough so that 
\begin{itemize}
\item each pair $\la e_0, M \ra < i$ that eventually achieves $\ct(\la e_0, M \ra) \geq i$ has done so by stage $s_3$, and

\medskip

\item each pair $\la e_0, M \ra < i$ with $\ct(\la e_0, M \ra) < i$ at all stages has had $\ct(\la e_0, M \ra)$ settle on its final value by stage $s_3$.
\end{itemize}
The following Claims~\ref{claim-AgreePhelper}--\ref{claim-MoveOut} help us complete the argument that pair $q_0 = \la e_0, N_0 \ra$ moves marker $m_i$ only finitely often.

\begin{ClaimP}\label{claim-AgreePhelper}
Consider pairs $\la e, M_0 \ra$ and $\la e, M_1 \ra$ with $M_0 \leq M_1$, where $\la e, M_0 \ra$ is active at stage $s$ with active side $b \in \{0,1\}$ and activity witness $w > M_1$.  Then $\la e, M_1\ra$ is also active at stage $s$ with active side $b$.
\end{ClaimP}

\begin{proof}[Proof of Claim]
Pair $\la e, M_0 \ra$ is active at stage $s$ with active side $b$ and activity witness $w > M_1$.  Therefore $g_e(w,s) = 1$; and $n \in A^{e,b}$ whenever $M_0 \leq n \leq w$ and $g_e(n,s) = 1$.  Thus $b$ and $w$ also witness that that $\la e, M_1 \ra$ is active at stage $s$ with active side $b$ because $M_0 \leq M_1 < w$.
\end{proof}

\begin{ClaimP}\label{claim-AgreeP}
If a pair $\la e_0, M \ra < i$ is active at stage $s > s_3$ with $\ct(\la e_0, M \ra) < i$ and an activity witness $w > i$, then its active side is $a$, the final active side of $q_0$.
\end{ClaimP}

\begin{proof}[Proof of Claim]
Suppose that $\la e_0, M \ra < i$ is active at stage $s > s_3$ with $\ct(\la e_0, M \ra) < i$, active side $b$, and activity witness $w > i$.

First suppose that $M \leq N_0$, and note that $N_0 \leq \la e_0, N_0 \ra < i < w$.  Then by Claim~\ref{claim-AgreePhelper}, pair $q_0 = \la e_0, N_0 \ra$ is also active at stage $s$ with active side $b$.  However, $q_0$ has active side $a$ at stage $s$ because $s > s_3 > s_2$.  Thus $b = a$, so $\la e_0, M \ra$ has active side $a$ at stage $s$.

Suppose instead that $M > N_0$.  Pair $q_0$ achieves arbitrarily large activity witnesses, so there is a stage $t > s$ at which $q_0$ is active with active side $a$ (as $t > s_2$) and activity witness $v > i$.  Note that $M \leq \la e_0, M \ra < i < v$.  By Claim~\ref{claim-AgreePhelper}, pair $\la e_0, M \ra$ is also active at stage $t$ with active side $a$.  However, if $b \neq a$, this means that $\la e_0, M \ra$ changes its active side from $b$ to $a$, thereby incrementing $\ct(\la e_0, M \ra)$, at some stage between $s$ and $t$.  This contradicts that $\ct(\la e_0, M \ra)$ had already stabilized by stage $s_3 < s$.  Therefore $b = a$, and pair $\la e_0, M \ra$ must have had active side $a$ at stage $s$.
\end{proof}

Let $s_4 > s_3$ be the first stage at which pair $q_0$ achieves an activity witness $w > i$ that is also greater than the maximum activity witness achieved by the pairs of the form $\la e_0, M \ra$ with $\la e_0, M \ra < i$ at stages $s \leq s_3$.  Such an $s_4$ exists by the assumption that $q_0$ achieves arbitrarily large activity witnesses.

\begin{ClaimP}\label{claim-MoveOut}
If marker $m_i$ moves at a stage $s > s_4$, then $m_i$ moves to mark an element of $A^{e_0, 1-a}$, where $a$ is the final active side of $q_0$.  That is, if $s > s_4$ and $m_{i,s} > m_{i, s-1}$, then $m_{i,s} \in A^{e_0, 1-a}$.
\end{ClaimP}

\begin{proof}[Proof of Claim]
Suppose that $m_i$ moves at stage $s > s_4$.  As $s_4 > s_1$, it must be that $m_i$ is moved by a pair $q$ with $q_0 \leq q < i$.  As $q_0 = \la e_0, N_0 \ra$ and $\ct(q_0) < i$, index $e_0$ is in the set $R$ used by $q$ to move $m_i$.  The action of $q$ to move $m_i$ at stage $s$ thus involves choosing an active side $a_{e_0}$ for index $e_0$.  We show that $a_{e_0} = a$ at stage $s$.  It then follows that $m_{i,s} \in A^{e_0, 1-a}$ because, at stage $s$, $m_{i,s}$ is set to $m_{j, s-1}$ for an $m_{j, s-1} \in A^{e_0, 1-a}$.

At stage $s$, the side $a_{e_0}$ is determined by considering the pairs $\la e_0, M \ra \leq q$ with $\ct(\la e_0, M \ra) < i$, finding the greatest activity witness $v$ yet achieved by any such pair, and by finding the most recent stage $t$ at which this activity witness was achieved.  Pair $q_0 = \la e_0, N_0 \ra$ is among the considered pairs, and, at stage $s_4$, $q_0$ achieves an activity witness $w > i$ that is greater than any activity witness ever achieved by a pair $\la e_0, M \ra \leq q < i$ at a stage $r \leq s_3$.  Thus it must be that $v > i$, and $v$ must be achieved by some $\la e_0, M \ra \leq q < i$ with $\ct(\la e_0, M \ra) < i$ at some stage $t > s_3$.  By Claim~\ref{claim-AgreeP}, $a$ is the active side for any such pair $\la e_0, M \ra$ that is active at a stage $t > s_3$ with activity witness $v > i$.  Therefore $a_{e_0}$ is chosen to be $a$ at stage $s$.
\end{proof}

By Claim~\ref{claim-MoveOut}, whenever marker $m_i$ moves after stage $s_4$, it moves to mark an element of $A^{e_0, 1-a}$.  Thus if $m_i$ moves at stage $s > s_4$, then $\forall t \geq s \; (m_{i,t} \in A^{e_0, 1-a})$.  In this case condition~\ref{it:side} fails for $q_0$ at all stages $t > s$, and therefore $q_0$ cannot move $m_i$ at any stage $t > s$.  Thus pair $q_0$ moves marker $m_i$ at most once after stage $s_4$, and therefore $q_0$ moves $m_i$ only finitely often.  This completes the proof that pair $q_0$ moves marker $m_i$ only finitely often, which completes the proof that marker $m_i$ moves only finitely often.  Thus the construction indeed produces an infinite $\Pi_1$ set $B$.

We finish the proof by showing that if $C$ is a $\Delta_2$ $\vec{A}$-cohesive set, then the final position of almost every marker is in $\ol{C}$.  This shows that $B \cap C$ is finite and therefore that $C$ is not an $\vec{A}$-cohesive subset of $B$.  It follows that $B$ has no $\Delta_2$ $\vec{A}$-cohesive subset.

Let $C$ be a $\Delta_2$ $\vec{A}$-cohesive set, and let $e$ be such that $g_e$ is a $\Delta_2$-approximation to $C$.  By $\vec{A}$-cohesiveness, either $C \subseteq^* A^{e,0}$ or $C \subseteq^* A^{e,1}$.  Let $a \in \{0,1\}$ be such that $C \subseteq^* A^{e,a}$, and let $N$ be least such that $\forall n \geq N \; (n \in C \imp n \in A^{e,a})$.  Let $q = \la e, N \ra$.

\begin{ClaimP}\label{claim-GoodPair}
Pair $q$ achieves arbitrarily large activity witnesses and eventually settles on active side $a$.
\end{ClaimP}

\begin{proof}[Proof of Claim]
We first show that $q$ achieves arbitrary large activity witnesses.  Given any number $W > N$, let $w$ be the least number with $w > W$ and $w \in C$.  Let $s > w$ be large enough so that $g_e(n,s) = C(n)$ for all $n \leq w$.  Then $q$ is active at stage $s$ with active side $a$ and activity witness $w$ or greater.  Thus $q$ achieves arbitrarily large activity witnesses.

Now we show that $q$ eventually settles on active side $a$.  Let $n$ be the least number with $n > N$ and $n \in C$.  Let $s_0$ be large enough so that $\forall m \leq n \; \forall s \geq s_0 \; (g_e(m,s) = C(m))$.  Then if $q$ is active at a stage $s > s_0$, it must use an activity witness $w \geq n$, in which case its active side must be $a$ because $g_e(n,s) = 1$ and $n \in A^{e,a}$.  That is, $q$ has active side $a$ whenever it is active at a stage later than $s_0$.  Thus $q$ eventually settles on active side $a$.
\end{proof}

By Claim~\ref{claim-GoodPair}, pair $q$ changes its active side only finitely often, so there is a stage $s_0$ by which $\ct(q)$ has reached its final value, which we also denote $\ct(q)$.  Consider an $i$ with $q < i$ and $\ct(q) < i$.  Let $s_1 > s_0$ be a stage by which marker $m_i$ has stopped moving:  $\forall s > s_1 \; (m_{i, s} = m_{i, s_1})$.  Then $m_{i, s_1} \in A^{e, 1-a}$.  If instead $m_{i, s_1} \in A^{e, a}$, then, by Claim~\ref{claim-GoodPair}, let $s > s_1$ be a stage at which $q$ is active with active side $a$ and activity witness $w > m_{i, s-1}$.  Then $q$ meets conditions~\ref{it:active}--\ref{it:side} at stage $s$ because $m_{i, s-1} = m_{i, s_1} \in A^{e,a}$.  Thus marker $m_i$ moves at stage $s$, either directly by some pair or on account of the movement of a marker $m_k$ with $k < i$.  This is a contradiction.  We have shown that marker $m_i$ settles on a member of $A^{e, 1-a}$ whenever $i$ satisfies $q < i$ and $\ct(q) < i$.  We therefore have that $B \subseteq^* A^{e, 1-a}$ and that $C \subseteq^* A^{e,a}$.  Thus $B \cap C$ is finite, as desired.
\end{proof}

\begin{Corollary}\label{cor-No-pCOH}
There is an infinite $\Pi_1$ set with no $\Delta_2$ p-cohesive subset.
\end{Corollary}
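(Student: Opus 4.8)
The plan is to obtain this as an immediate specialization of Theorem~\ref{thm-NoCOH-helper}, by feeding it a uniformly computable sequence of staggered partitions all of whose sets are primitive recursive. First I would recall the explicit sequence $\vec{A} = (A^{i,0}, A^{i,1})_{i \in \Nb}$ described just before Theorem~\ref{thm-NoCOH-helper}: for $i \in \Nb$ and $j \in \{0,1\}$, let $A^{i,j}$ be the set of numbers whose binary expansion has bit $j$ in position $i$. As observed there, each $A^{i,j}$ is primitive recursive, the sequence $\vec{A}$ is uniformly primitive recursive (hence uniformly computable), and it is a sequence of staggered partitions because for every $n$ and every $\sigma \in \{0,1\}^n$ the set $\bigcap_{i < n} A^{i, \sigma(i)}$ consists of the numbers whose binary expansions have bit $\sigma(i)$ in position $i$ for each $i < n$, which is infinite.

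Next I would apply Theorem~\ref{thm-NoCOH-helper} to this $\vec{A}$ to obtain an infinite $\Pi_1$ set $B$ with no $\Delta_2$ $\vec{A}$-cohesive subset.

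Finally I would observe that any $\Delta_2$ p-cohesive subset of $B$ would in particular be a $\Delta_2$ $\vec{A}$-cohesive subset of $B$, contradicting the choice of $B$. Indeed, a p-cohesive set $C$ is, by definition, cohesive for the collection of all primitive recursive sets; since each $A^{i,0}$ is primitive recursive, this gives $C \subseteq^* A^{i,0}$ or $C \subseteq^* A^{i,1}$ for every $i$, which is precisely what it means for $C$ to be $\vec{A}$-cohesive. Hence $B$ has no $\Delta_2$ p-cohesive subset, as required.

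There is no real obstacle here: all of the substantive work lies in Theorem~\ref{thm-NoCOH-helper}, and the corollary is just the instantiation of that theorem at a concrete uniformly primitive recursive sequence of staggered partitions, together with the trivial remark that p-cohesiveness entails cohesiveness for that particular sequence.
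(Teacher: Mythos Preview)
Your proposal is correct and follows essentially the same approach as the paper: apply Theorem~\ref{thm-NoCOH-helper} to the explicit uniformly primitive recursive sequence of staggered partitions, and observe that any p-cohesive set is automatically $\vec{A}$-cohesive because every $A^{i,0}$ is primitive recursive. The paper's own proof is just a terser version of exactly this argument.
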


\begin{proof}
Apply Theorem~\ref{thm-NoCOH-helper} to a uniformly primitive recursive sequence $\vec{A}$ of staggered partitions.  Then there is an infinite $\Pi_1$ set $B$ with no $\Delta_2$ $\vec{A}$-cohesive subset and hence no $\Delta_2$ p-cohesive subset.
\end{proof}

Corollary~\ref{cor-No-pCOH} is optimal in terms of the arithmetical hierarchy because it follows that there is an infinite $\Pi_1$ set with no $\Sigma_2$ p-cohesive subset, whereas every infinite $\Pi_1$ set has a $\Pi_2$ cohesive subset.  First, an infinite $\Pi_1$ set $B$ with no $\Delta_2$ p-cohesive subset also has no $\Sigma_2$ p-cohesive subset.  Every infinite $\Sigma_2$ set has an infinite $\Delta_2$ subset by the fact that every infinite c.e.\ set has an infinite computable subset relativized to $0'$.  Also, infinite subsets of p-cohesive sets are p-cohesive.  Thus if $C$ were a $\Sigma_2$ p-cohesive subset of $B$, then $C$ would have an infinite $\Delta_2$ subset $D$, which would be a contradictory $\Delta_2$ p-cohesive subset of $B$.  Second, every infinite $\Pi_1$ set has a $\Pi_2$ cohesive subset by the fact that every infinite computable set has a $\Pi_1$ cohesive subset (see~\cite{RogersBook}*{Theorem~XI}, for example) relativized to $0'$.  In fact, relativizing to $0'$ yields that every infinite $\Delta_2$ set has a $\Pi_2$ subset that is cohesive for the collection of $\Sigma_2$ sets.

Lastly, we paste together a few facts from the literature to observe that the collection of infinite $\Pi_1$ sets without $\Delta_2$ cohesive subsets does not coincide with the collection of infinite $\Pi_1$ sets without $\Pi_1$ cohesive subsets.  Specifically, we observe that there is an infinite $\Pi_1$ set that does not have a $\Pi_1$ r-cohesive subset but does have a $\Delta_2$ cohesive subset.

Recall that a set $B \subseteq \Nb$ is called \emph{semi-low\textsubscript{2}} if $\{e : \text{$W_e \cap B$ is infinite}\} \leqT 0''$.  We can use the strategy from Jockusch's proof that every set $X$ with $X' \geqT 0''$ computes a cohesive set~\cite{JockuschHHI}*{Theorem~4.1} to show that every infinite $\Delta_2$ set that is semi-low\textsubscript{2} has a $\Delta_2$ cohesive subset.  The proof is also similar to that of Proposition~\ref{prop-rCohInc}.

\begin{Proposition}[Following~\cite{JockuschHHI}*{Theorem~4.1}]\label{prop-Delta2Coh}
Let $B \subseteq \Nb$ be an infinite set that is $\Delta_2$ and semi-low\textsubscript{2}.  Then $B$ has a $\Delta_2$ cohesive subset.
\end{Proposition}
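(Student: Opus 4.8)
The plan is to run the standard $0'$-computable cohesive-set construction, but, following the strategy of~\cite{JockuschHHI}*{Theorem~4.1}, to shrink the reservoir only by intersecting with c.e.\ sets, never with complements. This way the only infinitude questions the construction needs answered have the form ``is $W_d \cap B$ infinite?'', and these are exactly the questions that the semi-low\textsubscript{2} hypothesis resolves: the set $S = \{d : W_d \cap B \text{ is infinite}\}$ satisfies $S \leqT 0''$. From there the argument parallels Proposition~\ref{prop-rCohInc}: use the limit lemma to pull the relevant $0''$-information down to a $0'$-approximation, and run a $0'$-computable search to extract the cohesive subset.

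Concretely, I would first use $S$ to build, by primitive recursion, a sequence of c.e.\ indices $d_{-1}, d_0, d_1, \dots$ and a function $h \colon \Nb \to \{0,1\}$ as follows: fix $d_{-1}$ with $W_{d_{-1}} = \Nb$; given $d_{e-1}$, let $d_e'$ be an index (by the s-m-n theorem) for $W_{d_{e-1}} \cap W_e$, set $h(e) = S(d_e')$, and put $d_e = d_e'$ if $h(e) = 1$ and $d_e = d_{e-1}$ otherwise. Then $(d_e)_e$ and $h$ are computable from $S$, so $h \leqT 0''$. Note that $W_{d_e} = \bigcap_{e' \leq e,\, h(e') = 1} W_{e'}$, so an easy induction (using the defining property of $S$) shows that $W_{d_e} \cap B$ is infinite for every $e$, that $W_{d_e} \subseteq W_e$ whenever $h(e) = 1$, and that $W_{d_{e-1}} \cap W_e \cap B$ is finite while $W_{d_e} = W_{d_{e-1}}$ whenever $h(e) = 0$.

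Next, by the limit lemma fix $g \leqT 0'$ with $\lim_s g(e,s) = h(e)$ for all $e$, and build $C = \{c_0 < c_1 < \cdots\} \subseteq B$ recursively from $0'$: given $c_0 < \dots < c_n$, search for the least pair $\la s, c \ra$ with $s > n$, $c > c_n$, and $c \in B \cap \bigcap_{e < n,\, g(e,s) = 1} W_e$, and set $c_{n+1} = c$. This search is $0'$-computable since $B \leqT 0'$, $g \leqT 0'$, and membership in a finite intersection of $W_e$'s is a c.e.\ (hence $0'$-decidable) condition; and it terminates because for all sufficiently large $s$ we have $\bigcap_{e < n,\, g(e,s) = 1} W_e = W_{d_{n-1}}$, whose intersection with $B$ is infinite. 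Thus $C$ is an infinite $\Delta_2$ subset of $B$.

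Finally, I would verify cohesiveness just as in Proposition~\ref{prop-rCohInc}. Fix $e$ and let $s_e$ be a stage past which $g(e', \cdot)$ has stabilized to $h(e')$ for all $e' \leq e$. For $n \geq \max(s_e, e+1)$, the pair chosen for $c_{n+1}$ has $s > n \geq s_e$, so $g(e', s) = h(e')$ for all $e' \leq e$, forcing $c_{n+1} \in \bigcap_{e' \leq e,\, h(e') = 1} W_{e'} = W_{d_e}$. If $h(e) = 1$, this gives $c_{n+1} \in W_e$ for all large $n$, so $C \subseteq^* W_e$; if $h(e) = 0$, this gives $c_{n+1} \in B \cap W_{d_{e-1}}$ for all large $n$, so $C \cap W_e \subseteq^* B \cap W_{d_{e-1}} \cap W_e$, which is finite, whence $C \subseteq^* \ol{W_e}$. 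So $C$ is a $\Delta_2$ cohesive subset of $B$. The delicate point is exactly the design constraint flagged at the outset: the semi-low\textsubscript{2} hypothesis only controls $W_d \cap B$, not $W_e \cap D$ for an arbitrary $\Delta_2$ set $D$, so we must never intersect the reservoir with a complement $\ol{W_e}$; instead, when $h(e) = 0$ we keep the old reservoir and exploit that $W_{d_{e-1}} \cap W_e \cap B$ being finite already makes the reservoir avoid $W_e$ cofinitely.
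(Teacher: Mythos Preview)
Your proof is correct and follows essentially the same approach as the paper's: both use semi-low\textsubscript{2} to compute from $0''$ which c.e.\ sets to commit to, pull this information down to $0'$ via the limit lemma, and then run a $0'$-computable search through $B$. The only cosmetic difference is the encoding of the $0''$-information: the paper records a strictly increasing function $f$ listing the indices $e$ whose $W_e$ get intersected into the reservoir, whereas you record the characteristic function $h$ of that same set of indices; the resulting constructions of $C$ and the verifications of cohesiveness are then the obvious translations of one another.
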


\begin{proof}
Given indices $e_0, \dots, e_n$, we can effectively produce an index $e$ such that $W_e = \bigcap_{i \leq n}W_{e_i}$.  Using this and the fact that $B$ is semi-low\textsubscript{2}, we can define the following function $f \leqT 0''$ by recursion.
\begin{align*}
f(0) &= \text{the least $e$ such that $B \cap W_e$ is infinite}\\
f(n+1) &= \text{the least $e > f(n)$ such that $B \cap \bigcap_{i \leq n}W_{f(i)} \cap W_e$ is infinite.}
\end{align*}
By the limit lemma, there is a function $g \colon \Nb^2 \to \Nb$ with $g \leqT 0'$ such that $\lim_s g(n, s) = f(n)$ for every $n$.  We compute an increasing enumeration $c_0 < c_1 < c_2 < \cdots$ of a cohesive set $C \subseteq B$ from $0'$ using that $B \leqT 0'$, that $g \leqT 0'$, and the fact that the sets $W_e$ are uniformly computable from $0'$.  The characteristic function of $C$ may then be computed from its increasing enumeration, so $C$ is the desired $\Delta_2$ cohesive subset of $B$.

Let $c_0$ be the least element of $B$.  Suppose we have already determined $c_0 < c_1 < \cdots < c_n$.  To find $c_{n+1}$, search for the least pair $\la s, c \ra$ with $s > n$ and $c > c_n$ and $c \in B \cap \bigcap_{i \leq n}W_{g(i,s)}$.  Then let $c_{n+1} = c$.  Such a pair $\la s, c \ra$ always exists because if $s$ is sufficiently large, then $\forall i \leq n \; (g(i,s) = f(i))$, in which case $B \cap \bigcap_{i \leq n}W_{g(i,s)}$ is infinite.

We check that $C$ is cohesive.  Suppose that $e \in \ran(f)$.  Let $i$ be such that $f(i) = e$, and let $s_0$ be large enough so that $g(i,s) = f(i) = e$ for all $s \geq s_0$.  Then $c_{n+1}$ is chosen from $W_{f(i)} = W_e$ whenever $n \geq \max\{i, s_0\}$.  Thus $C \subseteq^* W_e$.  Now suppose that $e \notin \ran(f)$, and let $n$ be least such that $f(n) > e$.  Then $B \cap \bigcap_{i < n}W_{f(i)} \cap W_e$ must be finite because otherwise we would have $f(n) \leq e$.  We just showed that $C \subseteq^* B \cap \bigcap_{i < n}W_{f(i)}$, so $C \cap W_e$ must be finite as well.  Therefore $C$ is cohesive.
\end{proof}

\begin{Proposition}\label{prop-NoPi1butYesDelta2}
There is an infinite $\Pi_1$ set that has no $\Pi_1$ r-cohesive subset but does have a $\Delta_2$ cohesive subset.
\end{Proposition}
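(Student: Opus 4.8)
The plan is to find a coinfinite c.e.\ set $X$ with the following two properties and then set $B = \ol X$: (i) no c.e.\ set $W$ with $X \subseteq W$ and $\ol W$ infinite has $\ol W$ r-cohesive --- equivalently, $X$ has no r-maximal superset; and (ii) $X$ is semi-low\textsubscript{2}, i.e., $\{e : W_e \cap \ol X \text{ is infinite}\} \leqT 0''$. Given such an $X$, the set $B = \ol X$ is an infinite $\Pi_1$ set; it is $\Delta_2$ because it is $\Pi_1$, and it is semi-low\textsubscript{2} by (ii), so Proposition~\ref{prop-Delta2Coh} yields a $\Delta_2$ cohesive subset of $B$. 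It then remains to check that (i) is equivalent to $B$ having no $\Pi_1$ r-cohesive subset, and to exhibit $X$.

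The equivalence is the complement correspondence already used in Section~\ref{sec-NoCOH}, now with ``r-maximal'' in place of ``maximal''. An infinite $\Pi_1$ subset $C$ of $B = \ol X$ is the same thing as the complement $C = \ol W$ of a c.e.\ set $W$ with $X \subseteq W$ and $\ol W$ infinite, and $C = \ol W$ is r-cohesive exactly when $\ol W$ is r-cohesive, i.e.\ when $W$ is r-maximal. Hence $B$ has an infinite $\Pi_1$ r-cohesive subset if and only if $X$ has an r-maximal superset, which (i) forbids. (This is the same reason that Martin's coinfinite c.e.\ set with no maximal superset gives an infinite $\Pi_1$ set with no $\Pi_1$ cohesive subset.)

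To produce $X$ I would paste together facts about priority constructions of c.e.\ sets. Martin's construction of a coinfinite c.e.\ set with no maximal superset~\cite{Martin} strengthens routinely to give a coinfinite c.e.\ set with no r-maximal superset: in place of merely preventing $\ol{W_e}$ from being cohesive, one arranges, for every infinite-complement c.e.\ $W_e \supseteq X$, that $\ol{W_e}$ has infinite intersection with each half of some fixed computable partition of $\Nb$, so that $\ol{W_e}$ is not r-cohesive. This is the type of requirement handled in the work of Lerman, Shore, and Soare~\cite{LermanShoreSoare}, and such a construction can be carried out with the usual $0''$-bookkeeping that makes $X$ low\textsubscript{2}. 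Finally, low\textsubscript{2} implies semi-low\textsubscript{2}: ``$W_e \cap \ol X$ is infinite'' is a $\Pi_2$ predicate relative to $\ol X$, hence decidable from $\ol X'' \equivT X'' \equivT 0''$. This yields an $X$ satisfying (i) and (ii), and $B = \ol X$ proves the proposition.

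I expect the one real difficulty to be keeping (i) and (ii) compatible. It is easy to build a coinfinite c.e.\ set with no r-maximal superset --- the set of Corollary~\ref{cor-No-pCOH} even has no p-cohesive subset in its complement --- but such sets are by default too complicated: their complements have no $\Delta_2$ cohesive subset at all, so they cannot be used here. Insisting on low\textsubscript{2} (equivalently, semi-low\textsubscript{2}) confines the failure of r-maximality to the $\Pi_1$ level, so that by Proposition~\ref{prop-Delta2Coh} the $\Delta_2$ level survives. Thus the substance of the argument is the existence of a single c.e.\ set that is simultaneously low\textsubscript{2} and has no r-maximal superset, for which one either cites a suitable refinement in the literature or runs the short combined priority construction sketched above.
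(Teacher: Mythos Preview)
Your plan matches the paper's: both take $B = \ol{X}$ for a coinfinite c.e.\ $X$ with (i) no r-maximal superset and (ii) semi-low\textsubscript{2} complement, use the r-maximal/$\Pi_1$-r-cohesive complement correspondence for the first half, and apply Proposition~\ref{prop-Delta2Coh} for the second. The only difference is in how $X$ is produced. The paper takes $X$ to be Lachlan's hyperhypersimple set with no maximal superset: hyperhypersimplicity is closed upwards among coinfinite c.e.\ sets, and r-maximality plus hyperhypersimplicity implies maximality, so any r-maximal superset of $X$ would be a forbidden maximal superset, giving (i); and Maass observed that this particular $\ol{X}$ is semi-low\textsubscript{2}, giving (ii). That is pure citation plus a two-line lattice argument, whereas you propose a fresh low\textsubscript{2} priority construction. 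One caution about your sketch: a \emph{single} fixed computable partition $(R, \ol{R})$ cannot witness non-r-cohesiveness for every c.e.\ $W \supseteq X$ with $\ol{W}$ infinite, since $W = X \cup \ol{R}$ is c.e.\ and has $\ol{W} = \ol{X} \cap R \subseteq R$, so $\ol{W} \cap \ol{R} = \emptyset$. You must let the witnessing computable set depend on $e$, at which point the requirements are genuinely of the Lachlan/Lerman--Shore--Soare flavor rather than a one-line tweak of Martin's $e$-state construction.
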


\begin{proof}
Let $A$ be Lachlan's hyperhypersimple set with no maximal superset from~\cite{LachlanLattice}.  Recall that a co-infinite c.e.\ set is called r-maximal if its complement is r-cohesive.  Then in fact $A$ has no r-maximal superset because hyperhypersimiplicity is $\subseteq$-upwards closed in the co-infinite c.e.\ sets and because r-maximality and hyperhypersimiplicity together imply maximality by~\cite{SoareBookRE}*{Proposition~4.5}.  So if $A$ had an r-maximal superset $X$, then $X$ would be a maximal superset of $A$, which is a contradiction.  Thus $\ol{A}$ is an infinite $\Pi_1$ set with no $\Pi_1$ r-cohesive subset.  Maass~\cite{Maass} observes that $\ol{A}$ is semi-low\textsubscript{2} (see also~\cite{SoareBookRE}*{Section~XVI.1}), so $\ol{A}$ has a $\Delta_2$ cohesive subset by Proposition~\ref{prop-Delta2Coh}.  Thus $B = \ol{A}$ is an infinite $\Pi_1$ set that has no $\Pi_1$ r-cohesive subset but does have a $\Delta_2$ cohesive subset.
\end{proof}

One may of course wonder if `no $\Pi_1$ r-cohesive subset' can be improved to `no $\Pi_1$ p-cohesive subset' in Proposition~\ref{prop-NoPi1butYesDelta2}.  We did not attempt to determine this.

\section{Cohesive powers of \texorpdfstring{$\omega$}{omega} over \texorpdfstring{$\Delta_2$}{Delta\_2} cohesive sets}\label{sec-CohPowDelta2}

The goal of this section is to compute a single computable colored copy $\mc{O}$ of $\omega$ such that $\prod_C \mc{O}$ is colorful for every $\Delta_2$ cohesive set $C$.  As a corollary, we obtain an infinite $\Pi_1$ set with no $\Delta_2$ cohesive subset, which is a weaker version of Corollary~\ref{cor-No-pCOH}.  We then apply Lemma~\ref{lem-Shuffle} to show that there are computable copies of $\omega$ whose cohesive powers over $\Delta_2$ cohesive sets have order-types of the form $\omega + \text{various shuffles}$.

\begin{Theorem}\label{thm-ColorsDenseNonstd}
There is a computable colored copy $\mc{O}$ of $\omega$ such that $\prod_C \mc{O}$ is colorful whenever $C$ is a $\Delta_2$ cohesive set.
\end{Theorem}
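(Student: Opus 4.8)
The plan is to reduce the colorfulness of $\prod_C\mc{O}$ to two separate demands on the colored order $\mc{O}=(L,\Nb;\prec_\mc{L},F)$, one about the coloring $F$ and one about the underlying copy $\mc{L}$ of $\omega$, and then to meet the latter by a movable-markers construction modeled on the proof of Theorem~\ref{thm-NoCOH-helper}. The two demands are:
\emph{(a)} for every color $d$ there is an $M_d$ such that every $M_d$ consecutive elements of $\mc{L}$ include one of color $d$; and
\emph{(b)} for every $\Delta_2$ cohesive set $C$, no non-standard element of $\prod_C\mc{L}$ has an immediate $\prec_{\prod_C\mc{L}}$-successor --- equivalently, no two distinct non-standard elements of $\prod_C\mc{L}$ lie in a common $\condF$-block.

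First I would check that \emph{(a)} and \emph{(b)} imply colorfulness. Fix a $\Delta_2$ cohesive $C$ and non-standard $[\psi]\prec_{\prod_C\mc{L}}[\varphi]$. By \emph{(b)} these lie in different blocks, so $[\psi]\pprec_{\prod_C\mc{L}}[\varphi]$, and hence $\lim_{n\in C}|(\psi(n),\varphi(n))_{\mc{L}}|=\infty$ by Lemma~\ref{lem-DifferentBlocks}. Given a solid color $\llb\delta\rrb$ of value $d$, let $\theta$ be the partial computable function that on input $n$ searches for an element of color $d$ strictly between $\psi(n)$ and $\varphi(n)$ in $\mc{L}$. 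By \emph{(a)} such an element exists once $|(\psi(n),\varphi(n))_\mc{L}|\ge M_d$, hence for almost every $n\in C$; so $C\subseteq^*\dom(\theta)$, $[\psi]\prec_{\prod_C\mc{L}}[\theta]\prec_{\prod_C\mc{L}}[\varphi]$, and $F^{\prod_C\mc{O}}([\theta])=\llb\delta\rrb$. This is Definition~\ref{def-PowColorful} item~\ref{it-ColorfulStd}, and Definition~\ref{def-PowColorful} item~\ref{it-ColorfulNonstd} then follows because $C$ is $\Delta_2$, exactly as in the discussion after Definition~\ref{def-PowColorful} (using that $\prod_C\mc{O}$ is $\Sigma_1$-recursively saturated by Theorem~\ref{thm-SatDelta2}). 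So $\prod_C\mc{O}$ is colorful.

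Both \emph{(a)} and \emph{(b)} are to be secured during a single construction of a computable linear order $\mc{L}=(\Nb;\prec_\mc{L})$ together with its coloring. I would build $\mc{L}$ in stages, at stage $s$ holding a finite linear order on $\{0,\dots,s-1\}$ and extending it by inserting the point $s-1$ either between two consecutive points or at an end, always assigning $s-1$ a permanent color. Demand \emph{(a)} is kept by a density invariant on the colors: whenever an insertion would push two consecutive color-$d$ points more than $2^{d+1}$ apart, first insert a color-$d$ point there; this costs finitely many extra insertions per stage and preserves \emph{(a)}. Demand \emph{(b)} is the substance. Let $g\colon\Nb^3\to\{0,1\}$ be a uniform sequence containing all $\Delta_2$-approximations, so every $\Delta_2$ cohesive set has a $\Delta_2$-approximation of the form $g_e$. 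Markers $m_0,m_1,\dots$ point to the current $i$-th points of $\mc{L}$; ``marker $m_i$ moves'' means a point is inserted below it, and the priority discipline --- run as in Theorem~\ref{thm-NoCOH-helper}, with an \emph{activity} predicate and a counter $\ct(\la e,N\ra)$ for each pair $\la e,N\ra$ to cope with the possibility that $g_e$ is not an honest approximation or approximates a non-cohesive set --- guarantees that each $m_i$ moves only finitely often, so that $\mc{L}\iso\omega$. The requirements are indexed by tuples $\la e,N,a,b\ra$: here $\varphi_a$ is a candidate non-standard representative, $\varphi_b$ a candidate representative of its immediate successor, and $\la e,N\ra$ carries the guess that $g_e$ approximates a cohesive set $C$ for which $N$ is a suitable threshold. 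The action of $\la e,N,a,b\ra$: when it currently appears that $\varphi_b(n)$ is the $\prec_\mc{L}$-immediate successor of $\varphi_a(n)$ for the indices $n>N$ that $g_e$ currently places in $C$, with the points $\varphi_a(n)$ sitting at large positions, insert a fresh point strictly between $\varphi_a(n)$ and $\varphi_b(n)$; once inserted, such a point remains strictly between them forever. Because $\varphi_a$'s positions grow on $C$, these insertions occur below any fixed point only finitely often, which is what the finite-injury bookkeeping needs.

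For the verification, given a $\Delta_2$ cohesive $C$, fix $e$ with $g_e$ a $\Delta_2$-approximation to $C$ and the corresponding threshold $N$; as in Claim~\ref{claim-GoodPair} and its application in Theorem~\ref{thm-NoCOH-helper}, the pair $\la e,N\ra$ is then active infinitely often, achieves arbitrarily large activity witnesses, and settles on a correct active side. Consequently, if some $[\varphi_a]$ were a non-standard element of $\prod_C\mc{L}$ with immediate successor $[\varphi_b]$, then by Lemma~\ref{lem-ImmedSucc} we would have $\varphi_b(n)$ equal to the immediate $\prec_\mc{L}$-successor of $\varphi_a(n)$ for almost every $n\in C$; but the action of $\la e,N,a,b\ra$ on cofinitely many $n\in C$ inserts a point that permanently lies strictly between $\varphi_a(n)$ and $\varphi_b(n)$, so this holds for only finitely many $n\in C$ --- a contradiction. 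Together with Lemma~\ref{lem-NonStdNoEnd} (non-standard elements have no endpoints) this gives \emph{(b)}, and hence the theorem. I expect the main obstacle to be exactly the three-way interaction between keeping $\mc{L}\iso\omega$ (finiteness of marker movement), maintaining the color-density invariant for \emph{(a)}, and running the diagonalizations for \emph{(b)} uniformly over all potential $\Delta_2$ cohesive sets; in particular, one must recheck that the activity/counter analysis of Theorem~\ref{thm-NoCOH-helper} still terminates when the action attached to a pair is the insertion of new points rather than the unmarking of old ones, and that higher-priority insertions disturb the witnesses of lower-priority requirements only finitely often.
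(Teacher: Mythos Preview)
Your decomposition into (a) and (b) is a legitimate and somewhat cleaner factoring than the paper's direct target, and the reduction (a)${}+{}$(b) $\Rightarrow$ colorful is correct. The paper instead aims at a single stronger property: for almost every $n\in C$, if $\psi(n)\prec_\mc{L}\varphi(n)$ then every color $d\le\max_<\{\psi(n),\varphi(n)\}$ already occurs in $(\psi(n),\varphi(n))_\mc{L}$. This folds your (a) into the action itself, so each action adds a whole run of colored points rather than a single separator.

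However, your plan for (b) has a real gap in the finiteness argument. You write that ``because $\varphi_a$'s positions grow on $C$, these insertions occur below any fixed point only finitely often,'' but $\varphi_a(n)\to\infty$ holds only in the natural-number sense, not in the $\prec_\mc{L}$-order you are building. The freshly inserted points are large natural numbers placed at small $\prec_\mc{L}$-positions, and nothing in your plan prevents such a point $k$ from being exactly the value $\varphi_{a'}(m)$ or $\varphi_{b'}(m)$ for some higher-priority requirement $\la e',N',a',b'\ra$ and some $m$ that $g_{e'}$ currently places in $C$. That requirement can then be provoked to insert again below the same $u$, and this cascade is not ruled out by the activity/counter mechanism of Theorem~\ref{thm-NoCOH-helper}: that mechanism only neutralizes requirements whose active sides oscillate, not requirements that are perfectly stable yet keep finding their endpoints among the freshly inserted points.

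The paper closes this gap with an idea absent from your proposal. To each triple $x=\la\ell,r,e\ra$ it assigns a pair of staggered partitions $(A^{2x,0},A^{2x,1})$ and $(A^{2x+1,0},A^{2x+1,1})$; cohesiveness forces $\varphi_\ell(C)\subseteq^* A^{2x,a}$ and $\varphi_r(C)\subseteq^* A^{2x+1,b}$ for some sides $(a,b)$. When a requirement acts, the new points are chosen from $\bigcap_y A^{2y,1-a_y}\cap A^{2y+1,1-b_y}$, i.e., from the \emph{opposite} sides to where the higher-priority $\varphi_\ell(C)$ and $\varphi_r(C)$ live. This ensures that, after a suitable stage, no newly inserted point below $u$ can serve as an endpoint $\varphi_{\ell_0}(n)$ or $\varphi_{r_0}(n)$ for a stable higher-priority $q_0$; only the finitely many pre-existing elements of $(v,u)_\mc{L}$ on the correct sides remain available, and each pair of them is used at most once. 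Without this device your induction on $q_0<u$ does not terminate, so the verification that $\mc{L}\cong\omega$ fails.
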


\begin{proof}
We compute a colored copy $\mc{O} = (L, \Nb; \prec_\mc{L}, F)$ of $\omega$ so that $\prod_C \mc{O}$ is colorful whenever $C$ is a $\Delta_2$ cohesive set.  We take the domain of the underlying linear order $\mc{L} = (L; \prec_\mc{L})$ to be $L = \Nb$.  Given a cohesive set $C$, recall that an element $[\varphi]$ of $\prod_C \mc{L}$ is non-standard if and only if $\lim_{n \in C} \varphi(n) = \infty$ by Lemma~\ref{lem-NonstdUnbdd}.

The goal is to arrange that
\begin{align*}
\forae n \in C\; \Bigl( &\psi(n)\da \prec_\mc{L} \varphi(n)\da\\
&\Imp\; \forall d \leq \max\nolimits_<\{\varphi(n), \psi(n)\}\; \exists k\; \bigl((\psi(n) \prec_\mc{L} k \prec_\mc{L} \varphi(n)) \,\land\, (F(k) = d) \bigr)\Bigr)\tag{$*$}\label{eq-MakeDense}
\end{align*}
whenever $C$ is a $\Delta_2$ cohesive set and $\varphi$ and $\psi$ are partial computable functions with $C \subseteq^* \dom(\varphi)$, $C \subseteq^* \dom(\psi)$, and $\lim_{n \in C}\varphi(n) = \lim_{n \in C}\psi(n) = \infty$.

Suppose we have arranged that $\mc{L} \iso \omega$, suppose that $C$ is a $\Delta_2$ cohesive set, and suppose that $[\varphi]$ and $[\psi]$ are non-standard elements of $\prod_C \mc{L}$ with $[\psi] \prec_{\prod_C \mc{L}} [\varphi]$.  Then $C \subseteq^* \dom(\varphi)$, $C \subseteq^* \dom(\psi)$, and $\lim_{n \in C}\varphi(n) = \lim_{n \in C}\psi(n) = \infty$.  Further suppose that we have achieved~\eqref{eq-MakeDense} for $C$, $\varphi$, and $\psi$.  Fix any color $d$, and let $\delta$ be the constant function with value $d$.  Partially compute a function $\theta(n)$ by searching for a $k$ with $\psi(n) \prec_\mc{L} k \prec_\mc{L} \varphi(n)$ and $F(k) = d$ and by letting $\theta(n)$ be the first such $k$ if there is one.  Property~\eqref{eq-MakeDense} and the fact that $\lim_{n \in C}\varphi(n) = \lim_{n \in C}\psi(n) = \infty$ ensure that there is such a $k$ for almost every $n \in C$.  Therefore $C \subseteq^* \dom(\theta)$, $[\psi] \prec_{\prod_C \mc{L}} [\theta] \prec_{\prod_C \mc{L}} [\varphi]$, and $F^{\prod_C \mc{O}}([\theta]) = \llb \delta \rrb$.  This shows that for every solid color $\llb \delta \rrb$, there is an element $[\theta]$ of $\prod_C \mc{L}$ between $[\psi]$ and $[\varphi]$ with color $\llb \delta \rrb$.  Thus $\prod_C \mc{O}$ is colorful because it satisfies Definition~\ref{def-PowColorful} item~\ref{it-ColorfulStd} and $C$ is $\Delta_2$.  Therefore, achieving~\eqref{eq-MakeDense} suffices to prove the theorem, again provided we also arrange that $\mc{L} \iso \omega$.

Let $g \colon \Nb^3 \to \{0,1\}$ be a uniform sequence containing all $\Delta_2$-approximations as in Definition~\ref{def-UniformDelta2Approx} and the discussion following it.  Write $g_e(n,s)$ in place of $g(e,n,s)$ for all $e$, $n$, and $s$.  Let $(A^{i,0}, A^{i,1})_{i \in \Nb}$ be a computable sequence of staggered partitions as in Definition~\ref{def-StagPart} and the discussion following it.

The construction acts when quadruples $q = \la \ell, r, e, N \ra$ meet certain conditions.  Think of $\la \ell, r, e, N \ra$ as coding a pair $(\varphi_\ell, \varphi_r)$ of partial computable functions along with a guess that $g_e$ approximates a $\Delta_2$ cohesive set and, moreover, that $N$ is a threshold by which certain cohesive behavior begins.  For notational convenience, we often write $\la x, N \ra$ for $\la \ell, r, e, N \ra$, where $x = \la \ell, r, e \ra$.  In this notation, we still call $\la x, N \ra$ a `quadruple' because $x$ codes a triple.

To each triple $x = \la \ell, r, e \ra$, assign partitions $(A^{2x, 0}, A^{2x, 1})$ and $(A^{2x + 1, 0}, A^{2x + 1, 1})$.  Notice that if $C$ is a cohesive set with $C \subseteq^* \dom(\varphi_\ell)$ and $C \subseteq^* \dom(\varphi_r)$, then there is a pair of sides $(a, b) \in \{0,1\} \times \{0,1\}$ of the partitions $(A^{2x, 0}, A^{2x, 1})$ and $(A^{2x + 1, 0}, A^{2x + 1, 1})$ such that $\forae n \in C\; (\varphi_\ell(n) \in A^{2x, a})$ and $\forae n \in C\; (\varphi_r(n) \in A^{2x + 1, b})$.

The goal of a quadruple of the form $\la x, N \ra = \la \ell, r, e, N \ra$ is to attempt to satisfy~\eqref{eq-MakeDense} for $\varphi = \varphi_r$, $\psi = \varphi_\ell$, and the $\Delta_2$ cohesive set approximated by $g_e$.  However, it only needs to succeed if $g_e$ really does approximate a cohesive set $C$ and there is a pair $(a, b) \in \{0,1\} \times \{0,1\}$ such that for all $n \geq N$ with $n \in C$, $\varphi_\ell(n) \in A^{2x, a}$ and $\varphi_r(n) \in A^{2x + 1, b}$.  If ever it looks like $\varphi_\ell(n) \prec_\mc{L} \varphi_r(n)$ for an $n \geq N$ with $n \in C$ but there are not elements of every color $d \leq \max_<\{\varphi_\ell(n), \varphi_r(n)\}$ in the interval $(\varphi_\ell(n), \varphi_r(n))_\mc{L}$, then $\la x, N \ra$ needs to add elements of the missing colors to the interval.  The difficulty is that we must produce a linear order of type $\omega$, and therefore we can only place finitely many elements $\prec_\mc{L}$-below any given element.  Priority prevents certain quadruples from adding elements $\prec_\mc{L}$-below certain other elements.  Even so, it may still be that a quadruple $q_0$ adds an element $k_0$, which induces a quadruple $q_1 \leq q_0$ to add an element $k_1 \prec_\mc{L} k_0$, which induces a quadruple $q_2 \leq q_1$ to add another element $k_2 \prec_\mc{L} k_0$, and so on.  It may even be that $q_0 = q_1 = q_2 = \cdots$.  To see how this could happen, suppose that quadruple $q_0 = \la \ell_0, r_0, e_0, N \ra$ adds an element $k_0$ to the interval $(\varphi_{\ell_0}(n), \varphi_{r_0}(n))_\mc{L}$.  Later, it could be that a quadruple $q_1 = \la \ell_1, r_1, e_1, M \ra \leq q_0$ wants to add an element $k_1$ to the interval $(\varphi_{\ell_1}(m), \varphi_{r_1}(m))_\mc{L}$, but $\varphi_{r_1}(m) = k_0$.  Adding $k_1$ to $(\varphi_{\ell_1}(m), \varphi_{r_1}(m))_\mc{L}$ would thus result in defining $k_1 \prec_\mc{L} k_0$.

To avoid the sort of behavior indicated above, when a quadruple $q$ wants to add elements to $\mc{L}$, it looks at the initial triples $y$ of higher-or-equal priority requirements $\la y, M \ra = \la \ell, r, e, M \ra \leq q$, supposes that $g_e$ approximates a $\Delta_2$ cohesive set $C_e$, and tries to choose the elements that it adds to avoid each $\varphi_\ell(C_e)$ and $\varphi_r(C_e)$.  To do this, for each such $y = \la \ell, r, e \ra$, $q$ looks at the most recent guess of sides $(a_y, b_y)$ such that $\varphi_\ell(C_e) \subseteq^* A^{2y, a_y}$ and $\varphi_r(C_e) \subseteq^* A^{2y + 1, b_y}$ made by any $\la y, M \ra \leq q$ for this $y$.  Then $q$ chooses the elements it adds to $\mc{L}$ from the set
\begin{align*}
\bigcap_{\la y, M \ra \leq q} A^{2y, 1-a_y} \cap A^{2y+1, 1-b_y}.
\end{align*}
By choosing elements from the opposite sides of the partitions, $q$ attempts to avoid adding elements from the sets $\varphi_\ell(C_e)$ and $\varphi_r(C_e)$ corresponding to higher-or-equal priority quadruples.  The staggering of the partitions ensures that there are infinitely many elements for $q$ to choose among.

If for $x = \la \ell, r, e \ra$, $g_e$ really does approximate a $\Delta_2$ cohesive set $C$ with $C \subseteq^* \dom(\varphi_\ell)$ and $C \subseteq^* \dom(\varphi_r)$, then any quadruple $\la x, N \ra$ with sufficiently large $N$ eventually settles on the correct guess of sides $(a, b)$ such that $\varphi_\ell(C) \subseteq^* A^{2x, a}$ and $\varphi_r(C) \subseteq^* A^{2x + 1, b}$.  However, just as in the proof of Theorem~\ref{thm-NoCOH-helper}, many $g_e$ do not approximate $\Delta_2$ sets at all, let alone $\Delta_2$ cohesive sets.  If $g_e$ is particularly ill-behaved, then a quadruple $\la x, N \ra = \la \ell, r, e, N \ra$ with this $e$ may change its guess $(a, b)$ concerning the sides of the partitions $(A^{2x, 0}, A^{2x, 1})$ and $(A^{2x + 1, 0}, A^{2x + 1, 1})$ infinitely often.  This makes it impossible for lower priority quadruples to predict which elements to avoid adding to $\mc{L}$ in order to avoid inciting a reaction from quadruple $\la x, N \ra$.  To combat this problem, we again count how many times $\la x, N \ra$ changes its guess.  The more $\la x, N \ra$ changes its guess, the more elements of $\mc{L}$ we prevent $\la x, N \ra$ from adding elements $\prec_\mc{L}$-below.  This makes it safe for lower priority quadruples to add elements where $\la x, N \ra$ cannot.

We now describe the construction in full detail.  Define $\prec_\mc{L}$ and $F$ in stages.  By the end of stage $s$, $\prec_\mc{L}$ will have been defined on $L_s \times L_s$ and $F$ will have been defined on $L_s$ for some finite $L_s \supseteq \{0, 1, \dots, s\}$.

Say that quadruple $q = \la x, N \ra = \la \ell, r, e, N \ra$ is \emph{active} at stage $s$ if there is a pair of sides $(a, b) \in \{0,1\} \times \{0,1\}$ of the partitions $(A^{2x, 0}, A^{2x, 1})$ and $(A^{2x + 1, 0}, A^{2x + 1, 1})$ along with a witness $w$ with $N < w < s$ meeting the following conditions.
\begin{enumerate}[(a)]
\item $g_e(w,s) = 1$.

\medskip

\item For all $m$ with $N \leq m \leq w$ and $g_e(m,s) = 1$:
\begin{itemize}
\item $\varphi_{\ell,s}(m)\da \in A^{2x, a}$, and

\smallskip

\item $\varphi_{r,s}(m)\da \in A^{2x +1, b}$.
\end{itemize}
\end{enumerate}
Notice that if $q$ is active at stage $s$, then there is a unique pair $(a,b)$ for which there is a $w$ meeting these conditions.  Call this pair $(a,b)$ the $\emph{active sides}$, and call the largest witness $w$ (with $N < w < s$) the \emph{activity witness} for $q$ at stage $s$.  To each quadruple $q$, we associate a counter $\ct(q)$ that counts the number of times that the active sides of $q$ change.

At stage $0$, set $L_0 = \{0\}$ with $F(0) = 0$.  Initialize $\ct(q) = 0$ for every quadruple $q$.

At stage $s > 0$, initially set $L_s = L_{s-1}$.  If $s \notin L_s$, then add $s$ to $L_s$, define it to be the $\prec_\mc{L}$-maximum element of $L_s$, and define $F(s) = 0$.  Then consider the active quadruples $q < s$.  If this is the first stage at which $q$ is active or if the active sides of $q$ are different than they were at the previous stage at which $q$ was active, then update the counter $\ct(q)$ to $\ct(q) + 1$.

The quadruple $q = \la \ell, r, e, N \ra$ \emph{demands action} at stage $s$ if it is active with active sides $(a,b)$ and activity witness $w$ and there is a least \emph{action input} $n$ with $N \leq n \leq w$ meeting the following conditions.

\begin{enumerate}[(1)]
\item\label{it:conv} $g_e(n,s) = 1$.

\medskip

\item\label{it:gap} $\varphi_\ell(n), \varphi_r(n) \in L_s$ and $\varphi_\ell(n) \prec_\mc{L} \varphi_r(n)$, but there is a $d \leq \max_<\{\varphi_\ell(n), \varphi_r(n)\}$ for which there is no $k \in L_s$ with $\varphi_\ell(n) \prec_\mc{L} k \prec_\mc{L} \varphi_r(n)$ and $F(k) = d$.

\medskip

\item\label{it:priority} $\varphi_\ell(n)$ is $\preceq_\mc{L}$-above all of $0, 1, \dots, \max_<\{q, \ct(q)\}$.
\end{enumerate}
If $q = \la \ell, r, e, N \ra$ demands action with action input $n$, then let 
\begin{align*}
S &=\bigl\{p \leq q : \text{$\varphi_r(n)$ is $\prec_\mc{L}$-above all of $0, 1, \dots, \textstyle{\max_<}\{p, \ct(p)\}$}\bigr\}\\
R &= \bigl\{y : \exists M (\la y, M \ra \in S)\bigr\}.
\end{align*}
The idea is that $S$ consists of the higher-or-equal priority quadruples $p \leq q$ that are currently permitted to add elements $\prec_\mc{L}$-below $\varphi_r(n)$, and that $R$ consists of the initial triples of each quadruple in $S$.  For each $y \in R$, let $w$ be the greatest activity witness yet achieved by any $\la y, M \ra \in S$, let $t$ be the most recent stage at which $w$ was achieved, let $M$ be least such that $\la y, M \ra \in S$ achieved activity witness $w$ at stage $t$, and let $(a_y, b_y)$ be the corresponding active sides.  If no $\la y, M \ra \in S$ has yet been active, then let $(a_y, b_y) = (0, 0)$.  Let $c = \max_<\{\varphi_\ell(n), \varphi_r(n)\}$, and let $k_0 < k_1 < \cdots < k_c$ be the $c+1$ least members of
\begin{align*}
\bigcap_{y \in R}\left(A^{2y, 1-a_y} \cap A^{2y+1, 1-b_y}\right) \setminus L_s,
\end{align*}
which exist because the intersection is infinite and $L_s$ is finite.  Add $k_0, \dots, k_c$ to $L_s$ and place them immediately $\prec_\mc{L}$-below $\varphi_r(n)$.  That is, let $v \in L_s$ be the current $\prec_\mc{L}$-greatest element of the interval $(\varphi_\ell(n), \varphi_r(n))_\mc{L}$ (or $v = \varphi_\ell(n)$ if the interval is empty), and set
\begin{align*}
\varphi_\ell(n) \preceq_\mc{L} v \prec_\mc{L} k_0 \prec_\mc{L} \cdots \prec_\mc{L} k_c \prec_\mc{L} \varphi_r(n).
\end{align*}
Also set $F(k_i) = i$ for each $i \leq c$, and say that \emph{$q$ has acted and added $k$'s}.  This completes the construction.

The constructed $\mc{L}$ is a computable linear order.  We show that $\mc{L} \iso \omega$ by showing that for each $u$, there are only finitely many elements $\prec_\mc{L}$-below $u$.

Fix $u$, and note that $u$ appears in $L_s$ at stage $s = u$ at the latest.  Consider the evolution of the construction at stages $s > u$.

\begin{ClaimC}\label{claim-AboveU}
Suppose that quadruple $q$ acts and adds elements to $L_s$ at stage $s > u$ and either $q \geq u$ or $\ct(q) \geq u$ at stage $s$.  Then the elements added by the action of $q$ are $\prec_\mc{L}$-above $u$.
\end{ClaimC}

\begin{proof}[Proof of Claim]
If $q$ acts at stage $s > u$ with action input $n$ and either $q \geq u$ or $\ct(q) \geq u$ at stage $s$, then $u \leq \max_<\{q, \ct(q)\}$ at stage $s$.  Thus it must be that $u \preceq_\mc{L} \varphi_\ell(n) \prec_\mc{L} \varphi_r(n)$ by condition~\ref{it:priority}.  In this case, the action adds elements to $L_s$ and places them in the interval $(\varphi_\ell(n), \varphi_r(n))_\mc{L}$ and hence places them $\prec_\mc{L}$-above $u$.
\end{proof}

It follows from Claim~\ref{claim-AboveU} that no quadruple $q \geq u$ acts to add elements $\prec_\mc{L}$-below $u$ at stages $s > u$.  Thus to show that there are only finitely many elements $\prec_\mc{L}$-below $u$, it suffices to show that each quadruple $q < u$ only ever acts to add finitely many elements $\prec_\mc{L}$-below $u$.  The following Claims~\ref{claim-AgreeSides}--\ref{claim-AddGoodK} aid this analysis.

\begin{ClaimC}\label{claim-AgreeSides}
Let $x = \la \ell, r, e \ra$, and consider quadruples $\la x, M_0 \ra$ and $\la x, M_1 \ra$ with $M_0 < M_1$.  Suppose that for each $i \in \{0,1\}$, $\la x, M_i \ra$ achieves arbitrarily large activity witnesses and eventually settles on a pair of active sides $(a_i, b_i)$.  Then $(a_0, b_0) = (a_1, b_1)$.
\end{ClaimC}

\begin{proof}[Proof of Claim]
We show that at infinitely many stages $t$, both $\la x, M_0 \ra$ and $\la x, M_1 \ra$ are active with active sides $(a_0, b_0)$.  It follows that $(a_0, b_0) = (a_1, b_1)$ because $\la x, M_1 \ra$ changes its active sides only finitely often.  To this end, given any $s$, let $t > s$ be a stage at which $\la x, M_0 \ra$ is active with active sides $(a_0, b_0)$ and activity witness $w > M_1$.  Such a stage $t$ exists because $\la x, M_0 \ra$ achieves arbitrarily large activity witnesses.  Then $g_e(w,t) = 1$, and also $\varphi_{\ell, t}(m)\da \in A^{2x,a_0}$ and $\varphi_{r, t}(m)\da \in A^{2x+1,b_0}$ for all $m$ with $M_0 \leq m \leq w$ and $g_e(m,t) = 1$.  Therefore $(a_0, b_0)$ and $w$ also witness that $\la x, M_1 \ra$ is active at stage $t$ with active sides $(a_0, b_0)$ because $M_0 < M_1 < w$.
\end{proof}

\begin{ClaimC}\label{claim-ActOnce}
For a given quadruple $q$, each number $n$ can be the action input for $q$ at most once.
\end{ClaimC}

\begin{proof}[Proof of Claim]
Suppose that $q = \la \ell, r, e, N \ra$ demands action with action input $n$ at some stage $s$.  Then $q$ adds elements of every color $i \leq \max_<\{\varphi_\ell(n), \varphi_r(n)\}$ to $L_s$ and places them $\prec_\mc{L}$-between $\varphi_\ell(n)$ and $\varphi_r(n)$.  Thus condition~\ref{it:gap} is never again satisfied for $q$ with action input $n$ at any stage $t > s$.
\end{proof}

Let $q_0 < u$, and assume inductively that there is a stage $s_0 > u$ such that no quadruple $p < q_0$ acts to add elements $k \prec_\mc{L} u$ after stage $s_0$.  We show that quadruple $q_0$ also adds only finitely many elements $\prec_\mc{L}$-below $u$.  There are three cases:  (i) the activity witnesses for $q_0$ are bounded, (ii) quadruple $q_0$ changes its active sizes at least $u$ many times, and (iii) quadruple $q_0$ achieves arbitrarily large activity witnesses and changes its active sides fewer than $u$ many times.

In order for $q_0$ to act and add elements at some stage $s$, it must be that $q_0$ is active at stage $s$ with some activity witness $w$, and there must be an action input $n$ with $n \leq w$.  Therefore, if we are in case~(i) and the activity witnesses for $q_0$ are always $\leq W$ for some fixed $W$, then $q_0$ only acts to add elements finitely often.  This is because when $q_0$ acts to add elements, it must use an action input $n \leq W$, and, by Claim~\ref{claim-ActOnce}, $q_0$ can use each $n \leq W$ as an action input at most once.  

If we are in case~(ii) and quadruple $q_0$ changes its active sides at least $u$ many times, then $\ct(q_0)$ is incremented at least $u$ many times, so there is a stage $s$ such that $\ct(q_0) \geq u$ at all stages $t \geq s$.  Thus if $q_0$ acts at a stage $t \geq s$, then the elements it adds at that stage are $\prec_\mc{L}$-above $u$ by Claim~\ref{claim-AboveU}.  Thus $q_0$ only ever adds finitely many elements $\prec_\mc{L}$-below $u$ in this case as well.

For the remaining case~(iii), suppose that our quadruple $q_0 = \la x_0, N_0 \ra = \la \ell_0, r_0, e_0, N_0 \ra$ achieves arbitrarily large activity witnesses and changes its active sides fewer than $u$ many times.  Let $\fct_0 < u$ denote the final value of $\ct(q_0)$, and let $(a_0, b_0)$ be the final active sides of $q_0$.  Let $v = \max_{\prec_\mc{L}}\{0, 1, \dots, \max_<\{q_0, \fct_0\}\}$.  We claim that eventually every element $k$ added to $L_s$ with  $v \prec_\mc{L} k \prec_\mc{L} u$ is in $A^{2x_0, 1-a_0} \cap A^{2x_0+1, 1-b_0}$.

\begin{ClaimC}\label{claim-AddGoodK}
There is a stage $s_1 \geq s_0$ such that whenever an element $k$ is added to $L_s$ and $v \prec_\mc{L} k \prec_\mc{L} u$ is defined at some stage $s \geq s_1$, we have that $k \in A^{2x_0, 1-a_0} \cap A^{2x_0+1, 1-b_0}$.
\end{ClaimC}

\begin{proof}[Proof of Claim]
By the choice of $s_0$, the fact that $s_0 > u$, and Claim~\ref{claim-AboveU}, we already know that no quadruple $p$ with either $p < q_0$ or $p \geq u$ acts to add elements $\prec_\mc{L}$-below $u$ after stage $s_0$.  Thus we need only consider the behavior of quadruples $p$ with $q_0 \leq p < u$.

Recall that our quadruple $q_0$ is $q_0 = \la x_0, N_0 \ra$.  Consider all quadruples $\la x_0, M \ra < u$ with this same $x_0$, including quadruple $q_0$.  Let $t_0$ and $s_1$ with $s_1 > t_0 > s_0$ be large enough so that the following hold.
\begin{itemize}
\item Each $\la x_0, M \ra < u$ that eventually achieves $\ct(\la x_0, M \ra) \geq u$ has done so by stage $t_0$.

\medskip

\item Each $\la x_0, M \ra < u$ that changes its active sides only finitely often has settled on its final active sides and on its final value of $\ct(\la x_0, M \ra)$ by stage $t_0$.

\medskip

\item For each $\la x_0, M \ra < u$ that does not achieve arbitrarily large activity witnesses, quadruple $q_0$ has, by stage $t_0$, achieved an activity witness larger than the maximum activity witness ever achieved by $\la x_0, M \ra$.

\medskip

\item By stage $s_1$, quadruple $q_0$ has achieved an activity witness larger than all the activity witnesses achieved by all $\la x_0, M \ra < u$ at stages $t \leq t_0$.
\end{itemize}
Such $t_0$ and $s_1$ exist because $q_0$ achieves arbitrarily large activity witnesses.

Suppose that quadruple $q = \la \ell, r, e, N \ra$ with $q_0 \leq q < u$ acts and adds $k$'s in the interval $(v, u)_{\mc L}$ at a stage $s \geq s_1$.  Then it must be that $v \prec_\mc{L} \varphi_r(n) \preceq_\mc{L} u$, where $n$ is the action input for $q$ at stage $s$, because the action of $q$ adds $k$'s immediately $\prec_\mc{L}$-below $\varphi_r(n)$.  In particular, at stage $s$, $\varphi_r(n)$ is $\prec_\mc{L}$-above all of $0, 1, \dots, \max_<\{q_0, \ct(q_0)\}$.  This means that $q_0 \in S$ and $x_0 \in R$, where $S$ and $R$ are the sets used by $q$ when it acts at stage $s$.  Thus when $q$ acts at stage $s$, it chooses active sides $(a_{x_0}, b_{x_0})$ corresponding to $x_0$.  To do this, the action of $q$ finds the greatest activity witness $w$ yet achieved by any $\la x_0, M \ra \in S$, the most recent stage $t$ at which $w$ was achieved, the least $M$ such that $\la x_0, M \ra \in S$ achieved activity witness $w$ at stage $t$, and then takes $(a_{x_0}, b_{x_0})$ to be the active sides of $\la x_0, M \ra$ at stage $t$.  We show that $(a_{x_0}, b_{x_0}) = (a_0, b_0)$.

Consider the quadruple $\la x_0, M \ra$ used to choose the active sides $(a_{x_0}, b_{x_0})$ corresponding to $x_0$ during the action of $q$ at stage $s$ as described above.  Note that $\la x_0, M \ra \leq q < u$.  Quadruple $\la x_0, M \ra$ changes its active sides only finitely often.  If $\la x_0, M \ra$ changes its active sides infinitely often, then eventually $\la x_0, M \ra$ achieves $\ct(\la x_0, M \ra) \geq u$ and hence has done so by stage $s > t_0$.  Therefore, at stage $s$, $\varphi_r(n)$ is not $\prec_\mc{L}$-above all of $0, 1, \dots, \max_<\{\la x_0, M \ra, \ct(\la x_0, M \ra)\}$ because $\varphi_r(n) \preceq_\mc{L} u$ but $\ct(\la x_0, M \ra) \geq u$.  This implies that $\la x_0, M \ra \notin S$, which is a contradiction.

Quadruple $\la x_0, M \ra$ achieves arbitrarily large activity witnesses.  If not, then by stage $s > t_0$ quadruple $q_0 = \la x_0, N_0 \ra$ has already achieved an activity witness greater than any activity witness ever achieved by $\la x_0, M \ra$.  Quadruple $q_0$ is in $S$, so the greatest activity witness achieved by a quadruple of the form $\la x_0, \wh{M} \ra$ in $S$ by stage $s$ was not achieved by $\la x_0, M \ra$.  This contradicts that $q$ uses $\la x_0, M \ra$ to choose $(a_{x_0}, b_{x_0})$ at stage $s$.

The most recent stage $t \leq s$ at which quadruple $\la x_0, M \ra$ achieves activity witness $w$ must satisfy $t > t_0$.  If $t \leq t_0$, then, by choice of $s_1$, quadruple $q_0 = \la x_0, N_0 \ra$ has already achieved an activity witness greater than $w$ by stage $s > s_1$.  This contradicts that $w$ is the greatest activity witness achieved by a quadruple of the form $\la x_0, \wh{M} \ra$ in $S$ by stage $s$.

It now follows that $(a_{x_0}, b_{x_0})$ are the final active sides of $\la x_0, M \ra$.  Quadruple $\la x_0, M \ra$ changes its active sides finitely often, so it settles on its final active sides by stage $t_0$ by choice of $t_0$.  The pair $(a_{x_0}, b_{x_0})$ is the active sides of $\la x_0, M \ra$ at the most recent stage $t \leq s$ at which $\la x_0, M \ra$ achieved activity witness $w$.  We showed that $t > t_0$, so $(a_{x_0}, b_{x_0})$ must be the final active sides of $\la x_0, M \ra$.

Recall that $(a_0, b_0)$ are the final active sides of $q_0$.  Quadruples $\la x_0, M \ra$ and $q_0 = \la x_0, N_0 \ra$ both achieve arbitrarily large activity witnesses and eventually settle on their active sides.  Therefore $\la x_0, M \ra$ and $q_0$ settle on the same active sides by Claim~\ref{claim-AgreeSides}.  Thus $(a_{x_0}, b_{x_0}) = (a_0, b_0)$.  Therefore, the $k$'s that the action of $q$ chooses to add to $L_s$ are in $A^{2x_0, 1-a_0} \cap A^{2x_0+1, 1-b_0}$.

Thus we have found an $s_1 \geq s_0$ such that whenever an element $k$ is added to $L_s$ and $v \prec_\mc{L} k \prec_\mc{L} u$ is defined at a stage $s \geq s_1$, it is on account of a quadruple $q$ with $q_0 \leq q < u$ and we have that $k \in A^{2x_0, 1-a_0} \cap A^{2x_0+1, 1-b_0}$.
\end{proof}

We may now show that our quadruple $q_0 = \la x_0, N_0 \ra = \la \ell_0, r_0, e_0, N_0\ra$ adds only finitely many elements $k \prec_\mc{L} u$.  Let $s_1$ be the stage from Claim~\ref{claim-AddGoodK}, and additionally assume that $q_0$ has settled on its final active sides $(a_0, b_0)$ and that $\ct(q_0)$ has reached its final value $\fct_0$ by stage $s_1$.  Recall that $v = \max_{\prec_\mc{L}}\{0, 1, \dots, \max_<\{q_0, \fct_0\}\}$.  By condition~\ref{it:priority}, every $k$ that $q_0$ adds to $L_s$ at a stage $s \geq s_1$ satisfies $v \prec_\mc{L} k$.  So suppose that $q_0$ acts at some stage $s \geq s_1$, adds an element $k$ to $L_s$, and defines $k \prec_\mc{L} u$, in which case it also defines $v \prec_\mc{L} k$.  Then at stage $s$, $q_0$ is active with active sides $(a_0, b_0)$, and it acts with action input $n$, where $\varphi_{\ell_0}(n) = i$ for some $i \in A^{2x_0, a_0}$, $\varphi_{r_0}(n) = j$ for some $j \in A^{2x_0+1, b_0}$, and $v \preceq_\mc{L} i \prec_\mc{L} j \preceq_\mc{L} u$.  The action then places $k$'s of each color $d \leq \max_<\{i,j\}$ in the interval $(i, j)_\mc{L}$.  If $q_0$ acts again at some later stage $t > s$ with some action input $m$, then again $\varphi_{\ell_0}(m) \in A^{2x_0, a_0}$ and $\varphi_{r_0}(m) \in A^{2x_0+1, b}$.  However, it cannot again be that $\varphi_{\ell_0}(m) = i$ and $\varphi_{r_0}(m) = j$ because condition~\ref{it:gap} would fail in this situation.  Thus when adding a number $k \prec_\mc{L} u$, the action input $n$ used by $q_0$ specifies a pair $(i,j) = (\varphi_{\ell_0}(n), \varphi_{r_0}(n)) \in A^{2x_0, a_0} \times A^{2x_0+1, b_0}$ with $v \preceq_\mc{L} i \prec_\mc{L} j \preceq_\mc{L} u$, and each such pair can be specified by $q_0$ at most once.  By Claim~\ref{claim-AddGoodK}, every element added to the interval $(v, u)_\mc{L}$ after stage $s_1$ is in $A^{2x_0, 1-a_0} \cap A^{2x_0+1, 1-b_0}$.  Therefore, there are only finitely many pairs $(i,j) \in A^{2x_0, a_0} \times A^{2x_0+1, b_0}$ with $v \preceq_\mc{L} i \prec_\mc{L} j \preceq_\mc{L} u$, and therefore quadruple $q_0$ can only add finitely many elements $k \prec_\mc{L} u$.  This completes case~(iii) and thus completes the proof that $\mc{L} \iso \omega$.

To complete the proof of the theorem, we must show that~\eqref{eq-MakeDense} is satisfied whenever $C$ is a $\Delta_2$ cohesive set and $\varphi$ and $\psi$ are partial computable functions with $C \subseteq^* \dom(\varphi)$, $C \subseteq^* \dom(\psi)$, and $\lim_{n \in C}\varphi(n) = \lim_{n \in C}\psi(n) = \infty$.

\begin{ClaimC}\label{claim-GoodQuad}
Suppose that quadruple $q = \la x, N \ra =  \la \ell, r, e, N \ra$ and pair $(a,b)$ are such that
\begin{itemize}
\item $g_e$ is a $\Delta_2$-approximation to an infinite $\Delta_2$ set $C$,

\smallskip

\item $\forall m \geq N\; (m \in C \;\imp\; \varphi_\ell(m)\da \in A^{2x, a})$, and

\smallskip

\item $\forall m \geq N\; (m \in C \;\imp\; \varphi_r(m)\da \in A^{2x+1, b})$.
\end{itemize}
Then $q$ achieves arbitrarily large activity witnesses, and $q$ eventually settles on active sides $(a,b)$.
\end{ClaimC}

\begin{proof}[Proof of Claim]
Given any number $W$, let $w$ be the least number with $w > W$ and $w \in C$.  Let $s > w$ be large enough so that $g_e(m,s) = C(m)$ for all $m \leq w$ and so that $\varphi_{\ell, s}(m)\da \in A^{2x, a}$ and $\varphi_{r, s}(m)\da \in A^{2x+1, b}$ for all $m \in C$ with $N \leq m \leq w$.  Then $q$ is active at stage $s$ with activity witness $w$ or greater.  Thus $q$ achieves arbitrarily large activity witnesses.

Let $n$ be the least number with $n > N$ and $n \in C$.  Let $s_0$ be large enough so that $\forall m \leq n\; \forall s \geq s_0\; (g_e(m,s) = C(m))$, $\varphi_{\ell, s_0}(n)\da \in A^{2x,a}$, and $\varphi_{r, s_0}(n)\da \in A^{2x+1,b}$.  Then if $q$ is active at a stage $s > s_0$, it must use an activity witness $w \geq n$, in which case its active sides must be $(a,b)$ because $g_e(n,s) = 1$, $\varphi_{\ell, s}(n)\da \in A^{2x,a}$, and $\varphi_{r, s}(n)\da \in A^{2x+1,b}$.  That is, $q$ has active sides $(a,b)$ whenever it is active at a stage later than $s_0$.  Thus $q$ eventually settles on active sides $(a,b)$.
\end{proof}

Let $C$ be a $\Delta_2$ cohesive set, and let $\varphi$ and $\psi$ be partial computable functions with $C \subseteq^* \dom(\varphi)$, $C \subseteq^* \dom(\psi)$, and $\lim_{n \in C}\varphi(n) = \lim_{n \in C}\psi(n) = \infty$.  We show that~\eqref{eq-MakeDense} holds for $C$, $\varphi$, and $\psi$.  Assume that $\forae n \in C\; (\psi(n) \prec_\mc{L} \varphi(n))$, for otherwise~\eqref{eq-MakeDense} vacuously holds.  Let $\ell$ and $r$ be such that $\varphi_\ell = \psi$ and $\varphi_r = \varphi$.  Let $e$ be such that $g_e$ is a $\Delta_2$-approximation to $C$.  Let $x = \la \ell, r, e \ra$.  By cohesiveness, let $(a,b)$ and $N$ be such that, for all $n \in C$ with $n \geq N$, $\varphi_\ell(n) \in A^{2x, a}$ and $\varphi_r(n) \in A^{2x+1, b}$.  Let $q$ be the quadruple $q = \la x, N \ra = \la \ell, r, e, N \ra$.  Quadruple $q$ and pair $(a,b)$ satisfy the hypotheses of Claim~\ref{claim-GoodQuad}, so the active sides of $q$ eventually settle on $(a,b)$ and therefore $\ct(q)$ also reaches a final value $\fct$.  Let $v = \max_{\prec_\mc{L}}\{0, 1, \dots, \max_<\{q, \fct\}\}$.  Let $n_0 \geq N$ be large enough so that $v \preceq_\mc{L} \varphi_\ell(n)$ whenever $n \geq n_0$ and $n \in C$.  Such an $n_0$ exists because $\lim_{n \in C}\varphi_\ell(n) = \infty$, but there are only finitely many elements $\prec_\mc{L}$-below $v$ because $\mc{L} \iso \omega$.

Suppose that $n \in C$ and $n \geq n_0$, and furthermore suppose for a contradiction that there is a $d \leq \max_<\{\varphi_\ell(n), \varphi_r(n)\}$ such that there is no $k$ with $\varphi_\ell(n) \prec_\mc{L} k \prec_\mc{L} \varphi_r(n)$ and $F(k) = d$.  Then conditions \ref{it:conv}--\ref{it:priority} hold for $n$ at all sufficiently large stages $s$, with~\ref{it:conv} holding because $n \in C$,~\ref{it:gap} holding by assumption, and~\ref{it:priority} holding by the choice of $n_0$.  By Claim~\ref{claim-GoodQuad}, quadruple $q$ achieves arbitrarily large activity witnesses.  Thus infinitely often $q$ is active with an activity witness $w > n$.  By Claim~\ref{claim-ActOnce}, each $m < n$ can be the action input for $q$ at most once.  Thus at some stage, $q$ eventually demands action with action input $n$.  The action of $q$ defines $\varphi_\ell(n) \prec_\mc{L} k \prec_\mc{L} \varphi_r(n)$ and $F(k) = d$ for some $k$, which contradicts that there is no such $k$.  This shows that~\eqref{eq-MakeDense} holds for $C$, $\varphi = \varphi_r$ and $\psi = \varphi_\ell$, which completes the proof.
\end{proof}

Theorem~\ref{thm-ColorsDenseNonstd} immediately provides a computable copy $\mc{L}$ of $\omega$ for which $\prod_C \mc{L} \iso \omega + \eta$ whenever $C$ is a $\Delta_2$ cohesive set.

\begin{Corollary}\label{cor-DenseNonstdDelta2}
There is a computable copy $\mc{L}$ of $\omega$ such that for every $\Delta_2$ cohesive set $C$, $\prod_C \mc{L} \iso \omega + \eta$.
\end{Corollary}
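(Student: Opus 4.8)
The plan is to take $\mc{L}$ to be the underlying linear order of the colored copy of $\omega$ furnished by Theorem~\ref{thm-ColorsDenseNonstd}. Concretely, I would apply Theorem~\ref{thm-ColorsDenseNonstd} to obtain a computable colored copy $\mc{O} = (L, \Nb; \prec_\mc{L}, F)$ of $\omega$ such that $\prod_C \mc{O}$ is colorful for every $\Delta_2$ cohesive set $C$, and then set $\mc{L} = (L; \prec_\mc{L})$. Since $\mc{O}$ is a colored copy of $\omega$, $\mc{L}$ is a computable copy of $\omega$. It then remains only to check that $\prod_C \mc{L} \iso \omega + \eta$ for every $\Delta_2$ cohesive set $C$, which is exactly the general fact recorded in the discussion preceding Lemma~\ref{lem-Shuffle}: colorfulness of $\prod_C \mc{O}$ implies that the cohesive power of the underlying linear order has order-type $\omega + \eta$. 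So the corollary is essentially immediate.

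For completeness I would spell out the three ingredients. Fix a $\Delta_2$ cohesive set $C$. First, by Lemma~\ref{lem-StdInitSeg} the standard elements of $\prod_C \mc{L}$ (the image of the canonical embedding) form an initial segment of order-type $\omega$, so $\prod_C \mc{L}$ has the form $\omega + \mc{M}$, where $\mc{M}$ is the suborder of non-standard elements; $\mc{M}$ is non-empty and countable. Second, $\mc{M}$ has no endpoints by Lemma~\ref{lem-NonStdNoEnd}, since every non-standard element has non-standard elements both strictly below and strictly above it. Third, $\mc{M}$ is dense: given non-standard $[\psi] \prec_{\prod_C \mc{L}} [\varphi]$, colorfulness of $\prod_C \mc{O}$ (via Definition~\ref{def-PowColorful} item~\ref{it-ColorfulNonstd}, say) produces a $[\theta]$ with $[\psi] \prec_{\prod_C \mc{L}} [\theta] \prec_{\prod_C \mc{L}} [\varphi]$, and $[\theta]$ is automatically non-standard because it lies strictly above the non-standard element $[\psi]$, hence above the entire initial segment of standard elements. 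Thus $\mc{M}$ is a countable dense linear order without endpoints, so $\mc{M} \iso \eta$ and $\prod_C \mc{L} \iso \omega + \eta$.

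There is essentially no obstacle in this corollary, since all the work is carried by Theorem~\ref{thm-ColorsDenseNonstd}. The only point worth a sentence is to note that plain colorfulness — rather than the color-dependent substitutions of Lemma~\ref{lem-Shuffle} — already suffices, which is immediate once one observes, as above, that betweenness together with the initial-segment structure of the standard elements forces the witness $[\theta]$ to be non-standard. (Alternatively one could invoke Lemma~\ref{lem-Shuffle} with $X = \{\bm{1}\}$, since $\bm{\sigma}(\{\bm{1}\}) = \eta$, but passing through the underlying order directly is cleaner.)
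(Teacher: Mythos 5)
Your proof is correct and follows exactly the same route as the paper's: apply Theorem~\ref{thm-ColorsDenseNonstd} to get the colored copy $\mc{O}$, take $\mc{L}$ to be its underlying order, and cite colorfulness together with Lemmas~\ref{lem-StdInitSeg} and~\ref{lem-NonStdNoEnd} to conclude $\prod_C \mc{L} \iso \omega + \eta$. The three ingredients you spell out (standard part is an initial $\omega$, non-standard part has no endpoints, colorfulness gives density of the non-standard part) are precisely the justification the paper records in the discussion following Definition~\ref{def-PowColorful}.
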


\begin{proof}
Let $\mc{O} = (L, \Nb; \prec_\mc{L}, F)$ be the computable colored copy of $\omega$ from Theorem~\ref{thm-ColorsDenseNonstd}, and let $\mc{L} = (L; \prec_\mc{L})$ be the underlying computable linear order of type $\omega$.  Let $C$ be a $\Delta_2$ cohesive set.  Then $\prod_C \mc{O}$ is colorful, so $\prod_C \mc{L} \iso \omega + \eta$ as explained in the discussion following Definition~\ref{def-PowColorful}.
\end{proof}

Corollary~\ref{cor-DenseNonstdDelta2} is as good as possible, in the sense that $\Delta_2$ cannot be improved to $\Pi_2$.

\begin{Proposition}\label{prop-SuccNonstdPi2}
For every computable copy $\mc{L}$ of $\omega$, there is a $\Pi_2$ cohesive set $C$ such that $\prod_C \mc{L} \niso \omega + \eta$.
\end{Proposition}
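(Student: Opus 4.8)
The plan is to build a $\Pi_2$ cohesive set $C$ for which $\prod_C\mc{L}$ has a non-standard element possessing an immediate successor. This suffices: by Lemma~\ref{lem-StdInitSeg} the standard elements of $\prod_C\mc{L}$ form an initial segment of order-type $\omega$, so a non-standard element lies above this infinite initial segment and hence has infinitely many predecessors; but in $\omega+\eta$ every element with an immediate successor has only finitely many predecessors (such an element must lie in the initial $\omega$, since the final $\eta$ is dense), so $\prod_C\mc{L}\niso\omega+\eta$.

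The problem thus reduces to producing total computable functions $\psi,\varphi\colon\Nb\to L$ with (i) the number of $\prec_\mc{L}$-predecessors of $\psi(s)$ tending to $\infty$ (so that $[\psi]$ is non-standard over every infinite set by Lemma~\ref{lem-NonstdUnbdd}), and (ii) the set $G=\{s:\varphi(s)\text{ is the }\prec_\mc{L}\text{-immediate successor of }\psi(s)\}$ infinite. Given such $\psi,\varphi$, note that $G$ is $\Pi_1$ — for fixed $s$, being an immediate successor is the conjunction of $\psi(s)\prec_\mc{L}\varphi(s)$ with the $\Pi_1$ condition $\neg\exists y\,(\psi(s)\prec_\mc{L}y\prec_\mc{L}\varphi(s))$ — hence $\Delta_2$ and infinite. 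Relativizing to $0'$ the fact that every infinite computable set has a $\Pi_1$ cohesive subset (as recorded after Corollary~\ref{cor-No-pCOH}), $G$ has a $\Pi_2$ subset $C$ that is cohesive for the $\Sigma_2$ sets, hence cohesive. Since $C\subseteq G$, Lemma~\ref{lem-ImmedSucc} shows $[\varphi]$ is the immediate successor of $[\psi]$ in $\prod_C\mc{L}$, and $[\psi]$ is non-standard by (i); this is the desired configuration.

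To construct $\psi$ and $\varphi$ I would use a tailored computable enumeration of $L$ rather than the identity enumeration. List $L$ in increasing $\Nb$-order as $\mu_0<\mu_1<\cdots$, and build finite sets $E_0\subseteq E_1\subseteq\cdots$ with $\bigcup_sE_s=L$ in rounds, each round adding one element per stage: in round $k$, first add $\mu_k$, let $a$ be the current $\prec_\mc{L}$-maximum, and then add every $x\in L$ with $x\preceq_\mc{L}a$ (finitely many, and the test $x\preceq_\mc{L}a$ is computable). Set $\psi(s)$ to be the $\prec_\mc{L}$-second-largest element of $E_s$ and $\varphi(s)$ the $\prec_\mc{L}$-largest. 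As $E_s$ grows, the second-largest element is $\prec_\mc{L}$-nondecreasing, and every $\mu_k$ is eventually enumerated, so $\psi(s)$ is unbounded in $\mc{L}$, giving (i). For (ii), at the stage $s_k$ ending round $k$ the set $E_{s_k}$ equals $\{x\in L:x\preceq_\mc{L}a_k\}$ for the round's final maximum $a_k$; this is precisely the $\prec_\mc{L}$-initial segment of $\mc{L}$ below and including $a_k$, so its largest element is the immediate successor of its second-largest element, i.e.\ $s_k\in G$. Because the $\prec_\mc{L}$-position of $a_k$ grows without bound, the stages $s_k$ are infinite in number, so $G$ is infinite.

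The one real obstacle is (ii): the successor function of $\mc{L}$ need not be computable, so no computable $\varphi$ can compute, at a predetermined $s$, the $\prec_\mc{L}$-successor of $\psi(s)$. The device that resolves this is the interval-completion step — enumerating all of $\{x:x\preceq_\mc{L}a\}$ is a finite, computable operation even though computing the global $\prec_\mc{L}$-ordering is not — which periodically forces $E_s$ to be a genuine initial segment of $\mc{L}$, at which moment adjacency of the top two enumerated elements comes for free. It is also essential that $G$ need only be $\Delta_2$ rather than computable: we do not recognize the good stages computably, but instead extract $C$ from $G$ via the relativized fact about cohesive subsets.
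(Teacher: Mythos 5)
Your overall strategy is the same as the paper's: fix total computable $\psi,\varphi\colon\Nb\to L$, exhibit an infinite $\Pi_1$ set $G$ of inputs on which $\varphi$ is the $\prec_\mc{L}$-immediate successor of $\psi$, take a $\Pi_2$ cohesive $C\subseteq G$, and invoke Lemma~\ref{lem-ImmedSucc} and Lemma~\ref{lem-NonstdUnbdd} to get a non-standard immediate-successor pair in $\prod_C\mc{L}$, which rules out $\omega+\eta$. All of that is correct and matches the paper. (The paper's $X$ is exactly your $G$ for its choice of $\psi,\varphi$, up to coding.)

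The gap is in the construction of $\psi$ and $\varphi$. In round $k$ you say ``let $a$ be the current $\prec_\mc{L}$-maximum, and then add every $x\in L$ with $x\preceq_\mc{L}a$ (finitely many, and the test $x\preceq_\mc{L}a$ is computable).'' The test is computable, but the set $\{x\in L : x\preceq_\mc{L}a\}$ is \emph{not effectively bounded}: for a general computable copy of $\omega$ there is no computable function giving an upper bound (in $\Nb$) on the $\prec_\mc{L}$-predecessors of $a$. Consequently you cannot computably detect when round $k$ has finished, so the stage $s_k$ ending round $k$ is not a computable function of $k$, the transition rule ``if round $k$ is done, start round $k+1$'' is not computable, and the resulting $\psi,\varphi$ are not total computable as claimed. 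This is exactly the kind of non-uniformity that makes computable copies of $\omega$ interesting, so it cannot be waved away as an implementation detail. Replacing the round-end test with a bounded search (e.g.\ only checking $x\le s$) does not obviously repair the argument, since then $E_{s_k}$ need not be a genuine $\preceq_\mc{L}$-initial segment and the ``adjacency for free'' at the top may fail.

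The paper avoids the issue entirely with a cleverer choice of $\psi,\varphi$: it sets $\psi(\la m,n\ra)=m$ and $\varphi(\la m,n\ra)=n$ (with a default value when $m\notin L$ or $n\notin L$) and takes the $\Pi_1$ set $X=\{\la m,n\ra : m,n\in L,\ n\text{ is the }\prec_\mc{L}\text{-immediate successor of }m\}$, which is visibly infinite since $\mc{L}\iso\omega$. Notice also that the paper does not need $\psi$ to be unbounded over all inputs, as you require in your item (i); it only needs $\psi$ and $\varphi$ to be injective on $C\subseteq X$, which holds because immediate successors and predecessors are unique, and injectivity over $C$ already yields $\lim_{n\in C}\psi(n)=\infty$ and hence non-standardness via Lemma~\ref{lem-NonstdUnbdd}. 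Weakening your requirement (i) in this way is what lets one sidestep the need for a tailored enumeration of $L$.
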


\begin{proof}
Let $\mc{L} = (L; \prec_\mc{L})$ be a computable copy of $\omega$.  Fix an $\ell \in L$.  Define total computable functions $\psi$ and $\varphi$ by
\begin{align*}
\psi(\la m, n \ra) &=
\begin{cases}
m & \text{if $m \in L$}\\
\ell & \text{if $m \notin L$}
\end{cases}\\ \\
\varphi(\la m, n \ra) &=
\begin{cases}
n & \text{if $n \in L$}\\
\ell & \text{if $n \notin L$}.
\end{cases}
\end{align*}
Let
\begin{align*}
X = \{\la m, n \ra : \text{$m, n \in L$ and $n$ is the $\prec_\mc{L}$-immediate successor of $m$}\}.
\end{align*}
Then $X$ is an infinite $\Pi_1$ set because $\mc{L}$ is a computable linear order of type $\omega$.  Therefore $X$ has a $\Pi_2$ cohesive subset $C$ as explained in the discussion following Corollary~\ref{cor-No-pCOH}.

Consider the elements $[\psi]$ and $[\varphi]$ of $\prod_C \mc{L}$.  Every element of $\mc{L}$ has exactly one $\prec_\mc{L}$-immediate successor, so for every $m$ there is at most one $n$ with $\la m, n \ra \in C$, and for every $n$ there is at most one $m$ with $\la m, n \ra \in C$.  It follows that $\psi$ and $\varphi$ are injective when restricted to $C$ and therefore that $\lim_{\la m, n \ra \in C}\psi(\la m, n \ra) = \lim_{\la m, n \ra \in C}\varphi(\la m, n \ra) = \infty$.  Thus $[\psi]$ and $[\varphi]$ are non-standard elements of $\prod_C \mc{L}$ by Lemma~\ref{lem-NonstdUnbdd}.  By the choice of $X$ and $C$, $\varphi(\la m, n \ra) = n$ is the $\prec_\mc{L}$-immediate successor of $\psi(\la m, n \ra) = m$ for every $\la m, n \ra \in C$.  Therefore $[\varphi]$ is the $\prec_{\prod_C \mc{L}}$-immediate successor of $[\psi]$ in $\prod_C \mc{L}$ by Lemma~\ref{lem-ImmedSucc}.  Thus the non-standard elements of $\prod_C \mc{L}$ are not dense, so $\prod_C \mc{L} \niso \omega + \eta$.
\end{proof}

\begin{Remark}
We obtain an infinite $\Pi_1$ set with no $\Delta_2$ cohesive subset as a consequence of Corollary~\ref{cor-DenseNonstdDelta2}.  Let $\mc{L} = (L; \prec_\mc{L})$ be a computable copy of $\omega$ such that $\prod_C \mc{L} \iso \omega + \eta$ for every $\Delta_2$ cohesive set $C$.  Let
\begin{align*}
X = \{\la m, n \ra : \text{$m, n \in L$ and $n$ is the $\prec_\mc{L}$-immediate successor of $m$}\}.
\end{align*}
as in the proof of Proposition~\ref{prop-SuccNonstdPi2}.  Then $X$ is an infinite $\Pi_1$ set.  The proof of Proposition~\ref{prop-SuccNonstdPi2} shows that if $C$ is a cohesive subset of $X$, then there is a non-standard $\prec_{\prod_C \mc{L}}$-immediate successor pair $[\psi] \prec_{\prod_C \mc{L}} [\varphi]$ in $\prod_C \mc{L}$.  Therefore, if $C$ is a cohesive subset of $X$, then $\prod_C \mc{L} \niso \omega + \eta$.  As $\prod_C \mc{L} \iso \omega + \eta$ for every $\Delta_2$ cohesive set $C$, it follows that $X$ cannot have a $\Delta_2$ cohesive subset.  So $X$ is an infinite $\Pi_1$ set with no $\Delta_2$ cohesive subset.
\end{Remark}

Finally, we reach the main result by combining Theorem~\ref{thm-ColorsDenseNonstd} with Lemma~\ref{lem-Shuffle}.

\begin{Theorem}\label{thm-BCSigma2ShuffleNew}
Let $X \subseteq \Nb \setminus \{0\}$ be a Boolean combination of $\Sigma_2$ sets, thought of as a set of finite order-types.  Then there is a computable copy $\mc{L}$ of $\omega$ such that for every $\Delta_2$ cohesive set $C$, the cohesive power $\prod_C \mc{L}$ has order-type $\omega + \bm{\sigma}(X \cup \{\omega + \zeta\eta + \omega^*\})$.  Moreover, if $X$ is finite and non-empty, then there is also a computable copy $\mc{L}$ of $\omega$ such that for every $\Delta_2$ cohesive set $C$, the cohesive power $\prod_C \mc{L}$ has order-type $\omega + \bm{\sigma}(X)$.
\end{Theorem}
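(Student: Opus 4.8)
The plan is to combine Theorem~\ref{thm-ColorsDenseNonstd} with Lemma~\ref{lem-Shuffle}, exactly as the sentence preceding the theorem statement indicates. First I would invoke Theorem~\ref{thm-ColorsDenseNonstd} to fix a single computable colored copy $\mc{O} = (L, \Nb; \prec_\mc{L}, F)$ of $\omega$ with the property that $\prod_C \mc{O}$ is colorful for \emph{every} $\Delta_2$ cohesive set $C$. The crucial feature is that $\mc{O}$ does not depend on $C$: all of the $\Delta_2$-specific work has already been carried out in the movable-markers construction of Theorem~\ref{thm-ColorsDenseNonstd}, so nothing further of that nature is needed here.

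Next I would feed $\mc{O}$ into the first bullet of Lemma~\ref{lem-Shuffle}, which produces a computable copy $\mc{L}$ of $\omega$ (obtained from the underlying linear order of $\mc{O}$ by replacing each element with a finite block whose length depends on the element's color and on $X$) such that for every cohesive set $C$, if $\prod_C \mc{O}$ is colorful then $\prod_C \mc{L} \iso \omega + \bm{\sigma}(X \cup \{\omega + \zeta\eta + \omega^*\})$. Here too the passage from $\mc{O}$ to $\mc{L}$ is uniform in and independent of $C$, so the same $\mc{L}$ serves every cohesive set making $\prod_C \mc{O}$ colorful. Composing the two steps: given any $\Delta_2$ cohesive set $C$, Theorem~\ref{thm-ColorsDenseNonstd} tells us $\prod_C \mc{O}$ is colorful, and hence Lemma~\ref{lem-Shuffle} gives $\prod_C \mc{L} \iso \omega + \bm{\sigma}(X \cup \{\omega + \zeta\eta + \omega^*\})$, as required. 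For the ``moreover'' clause, when $X$ is finite and non-empty I would instead apply the second bullet of Lemma~\ref{lem-Shuffle} to the same $\mc{O}$, obtaining a (possibly different) computable copy $\mc{L}$ of $\omega$ with $\prod_C \mc{L} \iso \omega + \bm{\sigma}(X)$ whenever $\prod_C \mc{O}$ is colorful, and conclude in the same way.

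I do not anticipate a genuine obstacle at this stage. The difficulty is entirely localized in the two inputs: Theorem~\ref{thm-ColorsDenseNonstd}, the single-copy colored construction that is colorful over all $\Delta_2$ cohesive sets, and Lemma~\ref{lem-Shuffle}, imported from~\cite{CohPowJournal}. The only point to check is that the two results have compatible quantifier structure---namely that Lemma~\ref{lem-Shuffle} furnishes one $\mc{L}$ good for all colorful-making $C$, while Theorem~\ref{thm-ColorsDenseNonstd} furnishes one $\mc{O}$ colorful for all $\Delta_2$ cohesive $C$---and this is precisely how they are phrased. Thus the proof is a short composition of the two statements, and it simultaneously recovers Theorem~\ref{thm-BCSigma2ShuffleOld} while strengthening it by swapping the order of the quantifiers on $\mc{L}$ and $C$ and by enlarging the class of admissible cohesive sets from $\Pi_1$ to $\Delta_2$.
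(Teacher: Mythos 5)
Your proof is correct and follows essentially the same route as the paper: fix the single colored copy $\mc{O}$ from Theorem~\ref{thm-ColorsDenseNonstd}, then apply the appropriate bullet of Lemma~\ref{lem-Shuffle} to produce $\mc{L}$. Your remark about the compatibility of the quantifier structure in the two ingredients is exactly the point that makes the composition work.
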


\begin{proof}
By Theorem~\ref{thm-ColorsDenseNonstd}, let $\mc{O}$ be a computable colored copy of $\omega$ such that for every $\Delta_2$ cohesive set $C$, the cohesive power $\prod_C \mc{O}$ is colorful.  Then apply Lemma~\ref{lem-Shuffle} to $\mc{O}$ to get the desired computable copy $\mc{L}$ of $\omega$ in either case.
\end{proof}

Similar to \cite{CohPowJournal}*{Example 5.5}, we can define a countable collection of pairwise isomorphic (but not computably isomorphic) linear orders whose cohesive powers over $\Delta_2$ cohesive sets are pairwise non-elementarily equivalent.

\begin{Example}\label{ex-NonEquivSeq}
There are computable copies $\mc{L}^1, \mc{L}^2, \mc{L}^3, \dots$ of $\omega$ such that the cohesive powers $\prod_C \mc{L}^k$ and $\prod_D \mc{L}^m$ are not elementarily equivalent whenever $1 \leq k < m$ and $C$ and $D$ are $\Delta_2$ cohesive sets.  (We put the index in the superscript to emphasize that we mean the cohesive powers of the individual structures, not the cohesive product of the sequence.)  For each $k \geq 1$, apply Theorem~\ref{thm-BCSigma2ShuffleNew} to the set $X = \{k\}$ to get a computable copy $\mc{L}^k$ of $\omega$ such that for every $\Delta_2$ cohesive set $C$, $\prod_C \mc{L}^k \iso \omega + \bm{\sigma}(\{\bm{k}\}) \iso \omega + \bm{k}\eta$.  If $1 \leq k < m$ and $C$ and $D$ are $\Delta_2$ cohesive sets, then $\prod_C \mc{L}^k \iso \omega + \bm{k}\eta$ and $\prod_D \mc{L}^m \iso \omega + \bm{m}\eta$.  The order-types $\omega + \bm{k}\eta$ and $\omega + \bm{m}\eta$ are not elementarily equivalent because they disagree on the $\Sigma_3$ sentence expressing that there is a maximal block (in the sense of the finite condensation) of size $k$.

In fact, Theorem~\ref{thm-BCSigma2ShuffleNew} is not necessary to obtain this example.  Instead, let $\mc{L}$ be the computable copy of $\omega$ from Corollary~\ref{cor-DenseNonstdDelta2}, and let $\mc{L}^k = \bm{k}\mc{L}$ for each $k \geq 1$.  Then
\begin{align*}
\prod\nolimits_C \mc{L}^k \;\iso\; \prod\nolimits_C(\bm{k}\mc{L}) \;\iso\; \bigl( \prod\nolimits_C \bm{k} \bigr) \bigl( \prod\nolimits_C \mc{L} \bigr) \;\iso\; \bm{k}(\omega + \eta) \;\iso\; \omega + \bm{k}\eta
\end{align*}
for every $k \geq 1$ and every $\Delta_2$ cohesive set $C$.  The second isomorphism is by Theorem~\ref{thm-CohPres} item~\ref{it-ProdIso}, and $\prod_C \bm{k} \iso \bm{k}$ in the third isomorphism because $\bm{k}$ is finite.  If $\mc{A}$ is a finite computable structure and $C$ is cohesive, then, by cohesiveness, every element of $\prod_C \mc{A}$ is in the range of the canonical embedding of $\mc{A}$ into $\prod_C \mc{A}$.  Therefore $\prod_C \mc{A} \iso \mc{A}$.  Here it is apparent that the sequence $\mc{L}^1, \mc{L}^2, \mc{L}^3, \dots$ may be taken to be uniformly computable.  The sequence may be taken to be uniformly computable in the first situation too because the results of Lemma~\ref{lem-Shuffle} hold uniformly.
\end{Example}

\section*{Acknowledgments}

We give many thanks to Volodya Shavrukov for his very helpful comments on Section~\ref{sec-NoCOH}, chief among which are drawing our attention to~\cite{LermanShoreSoare} and pointing out the connections among r-maximal major subsets, $\Delta_3$ preference functions, and $\Delta_2$ r-cohesive subsets.  We also thank Carl Jockusch and Richard Shore for very helpful discussions and suggestions.  Finally, we thank the anonymous reviewers for their suggestions which helped improve the clarity of this work.

\section*{Funding}
This project was partially supported by the John Templeton Foundation grant ID 60842 and by EPSRC grant EP/T031476/1.

\bibliographystyle{amsplain}
\bibliography{PowersOfOmegaDelta2}

\vfill

\end{document}